\pgfplotsset{compat=1.14}
\pgfplotsset{
	% define the custom colormap
	colormap={my basis colormap}{
		rgb255=(0, 114, 189);
		rgb255=(54, 106, 148);
		rgb255=(108, 98, 107);
		rgb255=(163, 91, 66);
		rgb255=(217, 83, 25);
	}
}
\pgfplotsset{
	colormap={my parula}{
		rgb255=(53.0655, 42.406, 134.9460);
		rgb255=(20.2336, 132.6061, 211.9511);
		rgb255=(55.5463, 184.8859, 157.9118);
		rgb255=(208.7187, 186.8470,  89.1966);
		rgb255=(248.9565, 250.6905, 13.7190);
	}
}
\journal{Computer Methods in Applied Mechanics and Engineering}
\def\mathbf#1{\boldsymbol{#1}}
\def\vec#1{\boldsymbol{#1}}
\theoremstyle{plain}
\newtheorem{theorem}{Theorem}[section]
\newtheorem{lemma}[theorem]{Lemma}
\theoremstyle{remark}
\theoremstyle{definition}
\definecolor{myBlue}{rgb} {0,0.4470,0.7410}
\definecolor{myRed}{rgb} {0.8500,0.3250,0.0980}
\begin{document}

\begin{frontmatter}

% \title{Fully nonlinear thin-shell isogeometric analysis using ASTS of arbitrary degree and handling trimmed NURBS surfaces}

% \title{Isogeometric analysis of fully nonlinear thin shells: ASTS of arbitrary degree and trimmed NURBS surfaces}

 % \title{Extending the subset of analysis-suitable T-splines: Various extraordinary points per face}
 
  \title{Analysis-suitable unstructured T-splines: Multiple extraordinary points per face}

% \title{Seamless integration of design and analysis of Kirchhoff-Love shells using analysis-suitable unstructured T-splines}  ASUTS

\author[label2]{Xiaodong Wei\corref{cor1}}
\ead{xiaodong.wei@epfl.ch}
\author[label1]{Xin Li}
\author[label3]{Kuanren Qian}
\author[label4]{Thomas J.R. Hughes}
\author[label3]{Yongjie Jessica Zhang}
\author[label5]{Hugo Casquero\corref{cor1}}
\ead{casquero@umich.edu}
\address[label1]{School of Mathematical Science, USTC, Hefei, China.}
\address[label2]{Institute of Mathematics, \'Ecole Polytechnique F\'ed\'erale de Lausanne, 1015 Lausanne, Switzerland.}
\address[label3]{Department of Mechanical Engineering, Carnegie Mellon University, Pittsburgh, PA 15213, U.S.A.}
\address[label4]{Oden Institute for Computational Engineering and Sciences, 201 East 24th Street, C0200, Austin, TX 78712-1229, U.S.A.}
\address[label5]{Department of Mechanical Engineering, University of Michigan – Dearborn, 4901 Evergreen Road, Dearborn, MI 48128-1491, U.S.A.}

\cortext[cor1]{Corresponding authors.}

\begin{abstract}

Analysis-suitable T-splines (AST-splines)  are a promising candidate to achieve a seamless integration between the design and the analysis of thin-walled structures in industrial settings. In this work, we generalize AST-splines to allow multiple extraordinary points within the same face. This generalization drastically increases the flexibility to build geometries using AST-splines; e.g., much coarser meshes can be generated to represent a certain geometry. The AST-spline spaces detailed in this work have $C^1$ inter-element continuity near extraordinary points and $C^2$ inter-element continuity elsewhere. We mathematically show that AST-splines with multiple extraordinary points per face are linearly independent and their polynomial basis functions form a non-negative partition of unity. We numerically show that AST-splines with multiple extraordinary points per face lead to optimal convergence rates for second- and fourth-order linear elliptic problems. To illustrate a possible isogeometric framework that is already available, we design the B-pillar and the side outer panel of a car using T-splines with the commercial software Autodesk Fusion360, import the control nets into our in-house code to build AST-splines, and import the B\'ezier extraction information into the commercial software LS-DYNA to solve eigenvalue problems. The results are compared with conventional finite elements. Good agreement is found, but conventional finite elements require significantly more degrees of freedom to reach a converged solution than AST-splines.

% , and export the B\'ezier extraction information into the commercial software LS-DYNA to solve eigenvalue problems.

% we design the B-pillar and the side outer panel of a car using T-splines with the commercial software Autodesk Fusion360, export the control nets, import the control nets in our in-house code to build AST-spline surfaces, export the B\'ezier extraction operators, and import the B\'ezier extraction operators in our in-house code to perform Kirchhoff-Love shell simulations.

\end{abstract}

\begin{keyword}

Isogeometric analysis  \sep  Analysis-suitable T-splines   \sep Extraordinary points \sep Linear independence \sep Optimal convergence  \sep Automotive engineering

% \sep Membrane locking in Kirchhoff-Love shells

\end{keyword}

\end{frontmatter}

%\linenumbers

\renewcommand{\thefootnote}{\fnsymbol{footnote}}

\section{Introduction}

In computer-aided-design (CAD) programs, there are two major paradigms to represent surfaces with arbitrary topological genus, namely, trimmed NURBS and spline constructions that handle extraordinary points (vertices around which the mesh topology is unstructured). Trimmed NURBS representations are the more widespread paradigm, available in all CAD programs. In this case, hundreds (or thousands) of tensor-product NURBS patches \cite{Rogers2001}, each with its own parameterization, are used to represent the surface. In addition, most of the NURBS patches are trimmed by NURBS curves. When a NURBS patch is trimmed, the parameterization and the control points of the NURBS patch are not changed in any way. Instead, computer-graphics techniques are used to show only one of the two parts in which the trimming curve divides the NURBS patch. Therefore, trimmed NURBS representations lack a conforming parameterization, i.e., a parameterization associated with the geometry of the final surface. Furthermore, when trimmed NURBS patches are joined together, small gaps between patches are often inevitable. By contrast, spline constructions that handle extraordinary points (also known as star points) have a conforming parameterization for the whole surface. In addition, watertight surfaces, i.e., surfaces without small gaps or overlaps, are always obtained. Subdivision surfaces (SubD) \cite{catmull1978recursively} and T-splines \cite{Sederberg2003, Sederberg2004, Sederberg2008} are the two spline constructions that handle extraordinary points (EPs) which have had more success in CAD programs thus far. SubD are available in the commercial software Rhinoceros 3D and T-splines are available in the commercial software Autodesk Fusion360. SubD are also available in most programs for computer animation (Pixar, Autodesk 3ds Max, Autodesk Maya, Zbrush, Blender, among others).

CAD surfaces are the main input for many downstream applications such as finite element analysis (FEA) and computer-aided manufacturing (CAM). Getting a bilinear quadrilateral mesh for conventional FEA from a trimmed NURBS representation is a non-trivial task. Even though there are several methods that aim at automating this process \cite{bommes2009mixed, ebke2013qex, bommes2013quad, liao2014structure, hiemstra2020towards, chen2019quadrilateral, lei2020quadrilateral, shepherd2020quad}, coming up with an algorithm that automatically delivers a non-distorted bilinear quadrilateral mesh suitable for FEA from any trimmed NURBS representation remains an open problem. This problem is particularly hard to automate due to the geometric imperfections (gaps, overlaps, etc.) often found in trimmed NURBS representations. As a result, in complex engineering applications, transitioning from a trimmed NURBS representation to a high-quality bilinear quadrilateral mesh is often the task that requires more manpower in the whole design-through-analysis cycle \cite{hardwick2005dart, cottrell2009isogeometric}.

Isogeometric analysis (IGA) \cite{1003.000} aims at achieving a seamless integration between CAD and FEA programs. IGA performs numerical simulations using different types of splines as basis functions as opposed to using Lagrange polynomials as in conventional FEA. An untrimmed NURBS patch was shown to be a suitable basis for analysis in \cite{bazilevs2006isogeometric, hughes2008duality, dof, da2011some}. The lack of a conforming parameterization in trimmed NURBS representations rules out the use of standard boundary-fitted methods. To circumvent this issue, non-boundary-fitted methods have been developed in recent years \cite{nagy2015numerical, breitenberger2015analysis, leidinger2019explicit, buffa2020minimal, antolin2019overlapping, wei2020immersed}\footnote{Non-boundary-fitted methods were generalized to deal with volumes instead of surfaces in \cite{antolin2019isogeometric}.}. SubD and T-splines have a conforming parameterization, but they are not directly analysis suitable. SubD uses infinite recursion formulas in the faces around EPs to reach global $C^1$ continuity. This infinite recursion is not amenable to numerical integration \cite{WAWRZINEK201660, juttler2016numerical} and limits convergence rates \cite{arden2001approximation, juttler2016numerical}. Variants of SubD have been proposed in the literature that recover optimal convergence rates for second-order linear elliptic problems \cite{XiaodongSubD2021}. Besides handling EPs, T-splines allow the presence of T-junctions to perform local refinement. T-junctions may result in lack of linear independence and/or partition of unity of the polynomial basis functions \cite{buffa2010linear}. To remedy this, topological constraints on the distribution of T-junctions were developed that guarantee linear independence and partition of unity of the polynomial basis functions \cite{Li2012, Scott2012, Li2014, DaVeiga2013, Veiga2012, bbs2015, Casquero2016, wei2017truncated}. This subset of T-splines is called analysis-suitable T-splines (AST-splines). Regarding EPs, T-splines have been combined with various EP constructions over the years \cite{sederberg1998non, peters2000patching, loop2004second, Scott2013}. In \cite{toshniwal2017smooth}, the subset of AST-splines was extended to handle EPs by using the D-patch framework \cite{reif1997refineable}. This was the first smooth EP construction with proof of linear independence and that led to optimal convergence rates in the context of T-splines. In \cite{CASQUERO2021109872}, this technology was used to represent capsules in simulations of fluid-structure interaction. In \cite{casquero2020seamless}, the combination of AST-splines with the D-patch framework was further improved and streamlined through the use of truncation \cite{Giannelli2012, Wei2018}. In addition, complex geometries were built to show the potential of AST-splines in industrial applications. Apart from SubD and T-splines, there have been other spline constructions that handle EPs proposed in the literature, e.g., manifold splines \cite{grimm1995modeling, navau2000modeling, ying2004simple, gu2005manifold, tosun2011manifold} and PHT-splines \cite{deng2008polynomial, li2010polynomial, kang2015new}. However, these alternatives have not made it into CAD software since control points do not behave as geometric shape handles. Manifold splines achieve optimal convergence rates although require a high number of quadrature points to do that \cite{majeed2017isogeometric, zhang2020manifold}. PHT-splines, when combined with the D-patch framework, also achieve optimal convergence rates \cite{nguyen2016refinable}.

In this work, we extend the subset of AST-splines to allow multiple EPs per face, even the four vertices of a face can be EPs. In previous works \cite{toshniwal2017smooth,casquero2020seamless}, AST-splines require EPs to be at least three rings apart from each other. When EPs are at least three rings apart from each other, both the mathematical proof of linear independence and the implementation of the algorithms are significantly simplified. However, allowing multiples EPs per face radically increases the topological flexibility to build a certain geometry using AST-splines; e.g., larger element sizes around holes can be used. This work shows that AST-splines with multiple EPs per face are linearly independent, their polynomial basis functions form a non-negative partition of unity, and have optimal convergence rates for second- and fourth-order linear elliptic problems. To show the potential of this technology, we combine the basis functions of AST-splines with control nets from the commercial software Autodesk Fusion360 to design the B-pillar and the side outer panel of a car from scratch. We thicken these complex AST-spline surfaces to obtain AST-spline volumes using B-splines in the thickness direction. These globally $C^1$-continuous AST-spline volumes are imported in the commercial software LS-DYNA using B\'ezier extraction so as to solve eigenvalue problems and perform comparisons with conventional finite elements.

The paper is outlined as follows. Section 2 describes how AST-splines with multiple EPs per face are built. Section 3 contains the proofs of linear independence and non-negative partition of unity of the polynomial basis functions. Section 4 computes the convergence rates of AST-splines in second- and fourth-order linear elliptic problems. Section 5 exemplifies the potential of this technology in the automotive industry. Conclusions are drawn in Section 6.

\section{Analysis-suitable T-splines}

In this section, we explain how to construct bi-cubic AST-spline surfaces with multiple EPs per face satisfying the following properties:

\begin{enumerate}
\item[(1)] Linear independence of the blending functions; that is, the blending functions constitute a basis.
\item[(2)] Partition of unity of the polynomial basis functions.
\item[(3)] Each basis function is pointwise non-negative.
\item[(4)] At least $C^1$ continuity everywhere while having a finite representation.
\item[(5)] Local support of the basis functions.
\item[(6)] Local $h$-refinement capabilities.
\item[(7)] Optimal convergence rates with respect to both the mesh size $h$ and the square root of the number of degrees of freedom when solving second- and fourth-order linear elliptic partial differential equations. 
\end{enumerate}

Property (2) implies that an affine transformation of an AST-spline surface is obtained by applying the transformation to its control points. Properties (2) and (3) guarantee that AST-spline surfaces satisfy the convex-hull property. In this section, we assume that the reader is familiar with NURBS \cite{Rogers2001, piegl2012nurbs} and B\'ezier extraction \cite{Borden2011, Scott2011}.

\subsection{T-mesh}

The \textit{T-mesh} determines the connectivity among the different components of a T-spline surface. Fig. \ref{tmesh} (a) shows an example of a T-mesh. The T-mesh \textit{vertices} are indicated with circles in Fig. \ref{tmesh} (a). The T-mesh \textit{edges} are closed line segments that connect two vertices without passing through any other vertex. The T-mesh \textit{faces} are four-sided regions delimited by edges that do not have any interior vertex or edge. Note that a face side may include more than one edge. In Fig. \ref{tmesh} (a), edges and faces are represented by solid black lines and white regions, respectively. The \textit{valence} of a vertex, denoted by $\mu$, is the number of edges that emanate from that vertex. A T-junction is a vertex located in the interior of a face side. T-junctions are vertices with valence 3. T-junctions are marked with blue circles in Fig. \ref{tmesh} (a). \textit{Extraordinary points (EPs)} are either interior vertices with $\mu \neq 4$ that are not T-junctions or boundary vertices with $\mu > 3$. EPs are marked with red circles in Fig. \ref{tmesh} (a). The edges emanating from an EP are called \textit{spoke edges}.

\begin{figure} [t!] 
 \centering
 \subfigure[T-mesh]{\includegraphics[scale=.45]{/New_Figures_Kuanren/T-mesh.pdf}} \hspace*{+6mm}
 \subfigure[Extended T-mesh]{\includegraphics[scale=.45]{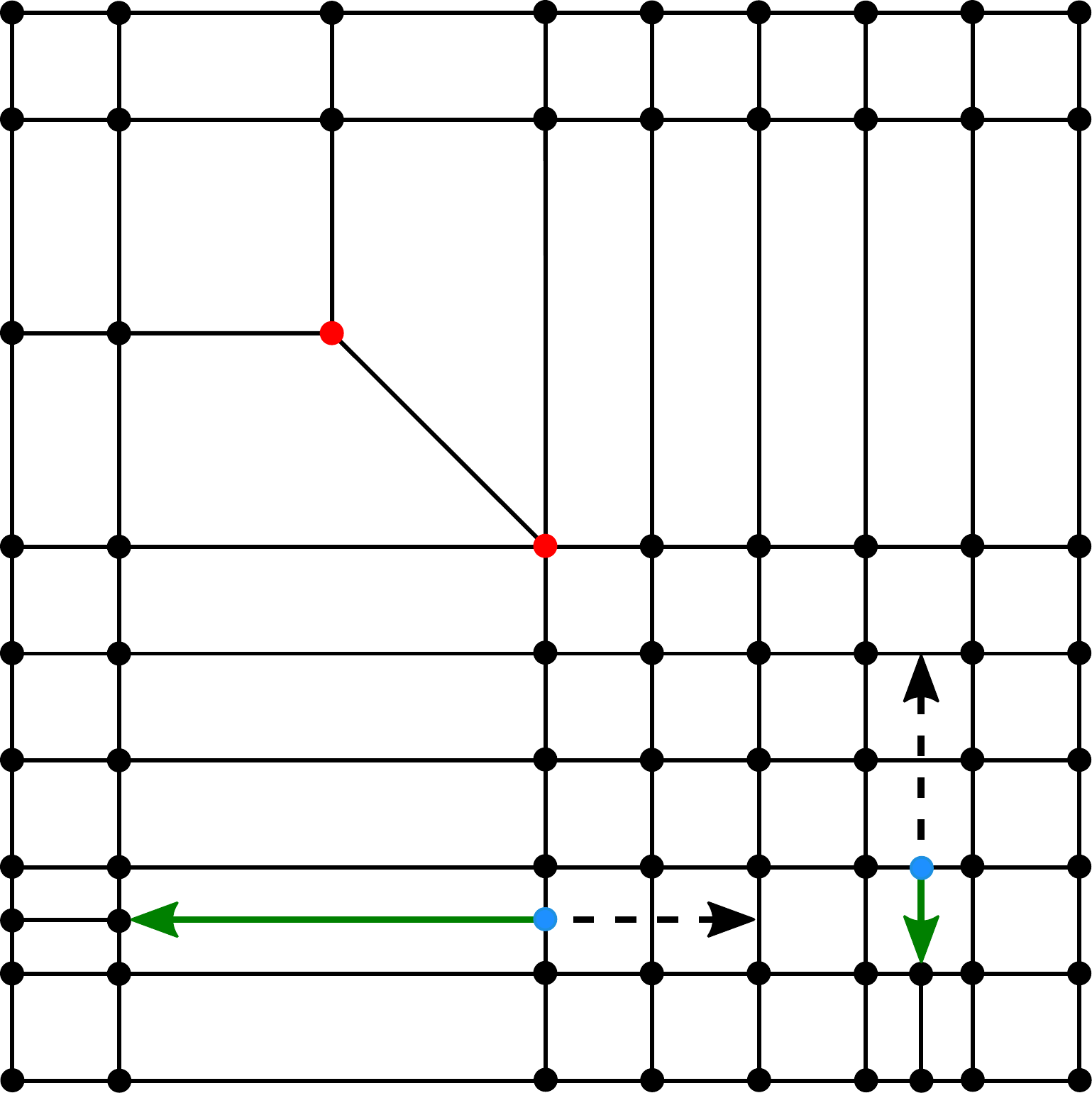}} \\
\caption{(Color online) (a) T-mesh with two EPs and two T-junctions. The EPs and T-junctions are marked with red and blue circles, respectively. (b) Extended T-mesh. Face extensions and edge extensions are represented with dashed black lines and green solid lines, respectively.}
\label{tmesh}
\end{figure}

The \textit{1-ring faces} of an EP are the faces that are in contact with the EP. For $m > 1$, the \textit{m-ring faces} of an EP are all faces that touch the ($m$-1)-ring faces and are not a part of the ($m$-2)-ring faces. The \textit{m-disk faces} of an EP are defined as the set containing all its 1-, 2-, ... , $m$-ring faces. The set of \textit{0-ring vertices} of an EP contains only the EP itself. For $m > 0$, the
\textit{m-ring vertices} of an EP contain all the vertices that lie on the $m$-ring faces and are not a part of the ($m$-1)-ring vertices. The \textit{m-disk vertices} of an EP are the union of all its 0-, 1-, ... , $m$-ring vertices.

A \textit{face extension} is a closed directed line segment that originates at a T-junction and moves in the direction of the missing edge until two orthogonal edges are encountered. A \textit{one-bay} face extension is the part of a face extension that lies on the face adjacent to the T-junction. An \textit{edge extension} is a closed directed line segment that originates at a T-junction and moves in the opposite direction of the face extension until one orthogonal edge is encountered. A \textit{T-junction extension} is composed of a face and an edge extension. Since T-junction extensions are closed line segments, a T-junction extension can intersect with other T-junction extension either in its interior or at its endpoints. The \textit{extended T-mesh} is obtained adding the T-junction extensions to the T-mesh. Fig. \ref{tmesh} (b) plots the extended T-mesh associated with the T-mesh shown in Fig. \ref{tmesh} (a).

The set of \textit{1-layer} faces around the T-mesh boundary contains the faces that are in contact with the T-mesh boundary. For $m > 1$, the set of \textit{m-layer} faces around the T-mesh boundary are all faces that touch the ($m$-1)-layer faces and are not a part of the ($m$-2)-layer faces. The set of \textit{0-layer vertices} around the T-mesh boundary contains the vertices at the T-mesh boundary. For $m > 0$, the
\textit{$m$-layer vertices} around the T-mesh boundary contain all the vertices that lie on the $m$-layer faces but are not a part of the ($m$-1)-layer vertices.

In this work, a T-mesh is \textit{admissible} when it satisfies the following conditions:

\begin{itemize}
\item No one-bay face extension subdivides a 3-disk face of an EP.
\item No perpendicular T-junction extensions intersect.
\item No EP belongs to the 0- and 1-layer vertices around the T-mesh boundary.
\item No T-junction parallel to the boundary belongs to the 0- and 1-layer vertices around the T-mesh boundary.
\end{itemize}

AST-splines are T-splines defined over an admissible T-mesh. In preceding works \cite{Scott2013,toshniwal2017smooth,casquero2020seamless}, the subset of AST-splines requires that no EP belongs to the 3-disk vertices of any other EP. In this paper, we remove this condition; namely, we allow even all the vertices within a face to be EPs. In addition, we also allow EPs to be one layer closer to the T-mesh boundary in comparison with previous works \cite{Scott2013,toshniwal2017smooth,casquero2020seamless}. Generalizing AST-splines in this manner drastically increases the flexibility of building geometries using AST-splines.

% Note that the aforementioned conditions simply define a possible subset of T-splines for which properties (1)-(7) have already been proved \cite{XLi2015, toshniwal2017smooth, Li2012, zhang2015linear}.

\begin{figure} [t!]
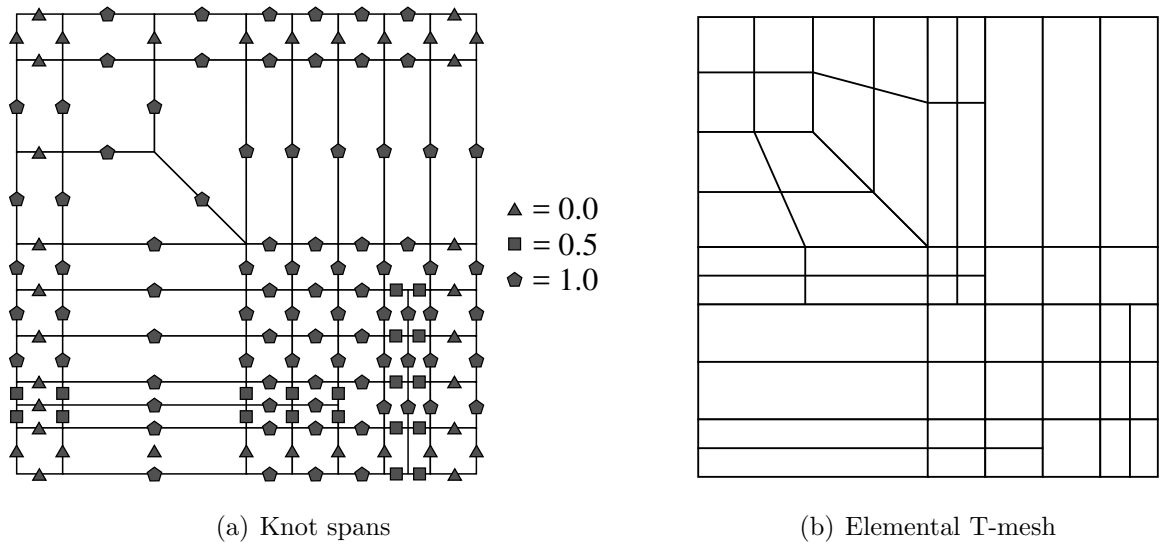
 
 \centering
 \subfigure[Knot spans]{\includegraphics[scale=.4]{/New_Figures_Kuanren/Knot_spans.pdf}} \hspace*{+6mm}
 \subfigure[Elemental T-mesh]{\includegraphics[scale=.5]{/New_Figures_Kuanren/Elemental_T-mesh.pdf}} \\
\caption{(a) A possible knot span configuration for the T-mesh represented in Fig. \ref{tmesh} (a). The pentagons, squares, and triangles correspond to knot spans with values 1, 1/2, and 0, respectively. (b) Elemental T-mesh associated with the T-mesh and the knot span configuration represented in Fig. \ref{tmesh} (a) and Fig. \ref{knotspans} (a), respectively.}
\label{knotspans}
\end{figure}

\subsection{Knot spans}

\textit{Knot spans} determine the parameterization of a T-spline surface. Each T-mesh edge has a knot span assigned. Knot spans are non-negative real numbers. A knot span configuration is valid when it satisfies the following conditions:

\begin{itemize}
\item Knot spans on opposite sides of every face are required to sum to the same value.
\item All the edges that emanate from the T-mesh boundary are assigned with zero knot spans.
\end{itemize}

A possible knot span configuration for our T-mesh example is plotted in Fig. \ref{knotspans} (a). In this work, the spoke edges of a given EP will be assigned with the same nonzero knot span.

The \textit{elemental T-mesh} determines the elements of a T-spline surface, i.e., the regions in which all basis functions are $C^\infty$. Starting from the T-mesh faces, the elements of the elemental T-mesh are obtained by modifying the T-mesh as follows:

% henceforth

\begin{itemize}
\item Adding the face extension to each T-junction, which subdivides faces into two elements.
\item Subdividing each face within the 1-ring faces of an EP into four elements.
\item Eliminating faces with zero parametric measure.
\end{itemize}

 Fig. \ref{knotspans} (b) plots the elemental T-mesh associated with the T-mesh and the knot span configuration shown in Fig. \ref{tmesh} (a) and Fig. \ref{knotspans} (a), respectively. The number of elements in the elemental T-mesh is denoted by $n_{el}$.

\subsection{B\'ezier extraction}

Since basis functions within each element of the elemental T-mesh are bi-cubic polynomials, a basis function $N_A$ restricted to an element $e$ can be represented as a linear combination of the 16 bi-cubic tensor-product Bernstein polynomials, viz.,

\begin{equation}    
N_A|_e = N^e_a \left( \mathbf{\xi} \right) = \sum_{j=1}^{16} C^e_{aj} b_j \left( \mathbf{\xi} \right),  \quad   \mathbf{\xi}   \in   \square  \text{,}
\end{equation}

\noindent where $\square$ is the parent element domain, $b_j$ is the $j$-th Bernstein polynomial, $A$ is a global basis function index, and $a$ is a local-to-element basis function index. Following \cite{Hughes2012}, we use the array $\text{IEN}$ to establish a correspondence between local and global numbering of basis functions, namely, $A = \text{IEN}(a,e)$. 

Collecting all the basis functions with support on element $e$ and the 16 Bernstein polynomials in column vectors $\mathbf{N}^e= ( N_1^e, N_2^e, ..., N_{n^e}^e )^T$ and $\mathbf{b}= ( b_1, b_2, ..., b_{16})^T$, respectively, the \textit{spline extraction operator} $\mathbf{C}^e$ is a matrix of dimension $n^e \times 16$ that relates the basis functions with the Bernstein polynomials as follows
\begin{equation}
\mathbf{N}^e \left(  \mathbf{\xi} \right)=\mathbf{C}^e \mathbf{b} \left(  \mathbf{\xi} \right),\quad\vec\xi\in\square  \text{,}
\end{equation}
where $n^e$ is the number of spline basis functions with support on element $e$. Within the subset of AST-splines, for elements affected by T-junctions, $n^e = 16$ as for the case of NURBS, but for elements affected by EPs, $n^e$ may be higher than 16. 

In an analogous way to how Bernstein polynomials can be related to spline basis functions, spline control points $\mathbf{P}^e=( \mathbf{P}_1^e, \mathbf{P}_2^e, ..., \mathbf{P}_{n^e}^e )^T$ can be related to B\'ezier control points $\mathbf{B}^e= ( \mathbf{B}^e_1, \mathbf{B}^e_2, ..., \mathbf{B}^e_{16})^T$ as follows
\begin{equation}\label{bextractioncoef}
\mathbf{B}^e  =  \left( \mathbf{C}^e \right)^T   \mathbf{P}^e  \text{,}
\end{equation}
where $\mathbf{P}^e$ and $\mathbf{B}^e$ are matrices of dimension $n^e \times 3$ and $16 \times 3$, respectively. $\mathbf{E}^e = (\mathbf{C}^e)^T$ is the \textit{B\'ezier extraction operator}.

\subsection{Basis functions}

In order to define basis functions, we classify the faces and vertices of the T-mesh as follows:

\begin{itemize}
\item \textit{Irregular faces} are the 1-ring faces of EPs. \textit{Transition faces} are the 2-ring faces of EPs. The remaining faces are \textit{regular faces}.

\item \textit{Irregular vertices} are the 0-ring vertices of EPs. \textit{Transition vertices} are the 1-ring vertices of EPs. The remaining vertices are \textit{regular vertices}. 
\end{itemize}

As proposed in \cite{toshniwal2017smooth}, two different T-spline spaces are defined:

\begin{itemize}
\item A design space ($\mathbb{S}^1_D$) for CAD.  In $\mathbb{S}^1_D$, a basis function is assigned to each vertex. The spline basis functions associated with regular vertices are globally $C^2$-continuous while the spline basis functions associated with irregular and transition vertices are globally $C^1$-continuous. From now on, the basis functions of $\mathbb{S}^1_D$ will be denoted by $N_L$, where $L \in \{ 1,...,n\}$ and $n$ is the number of T-mesh vertices.

\item An analysis space ($\mathbb{S}^1_A$) for CAE.  A basis function is assigned to each vertex that is not an irregular vertex or a transition vertex whose 1-ring faces are either irregular faces or transition faces with zero parametric measure. The spline basis functions associated with regular vertices are globally $C^2$-continuous while the spline basis functions associated with transition vertices are globally $C^1$-continuous. Four basis functions are assigned to each irregular face. The face-based basis functions are globally $C^1$-continuous. From now on, the vertex-based basis functions of $\mathbb{S}^1_A$, the face-based basis functions of $\mathbb{S}^1_A$, and all the basis functions of  $\mathbb{S}^1_A$ will be denoted by $\hat{M}_V$, $\tilde{M}_F$, and $M_B$, respectively, where $F \in \{ 1,...,n-n_{ep}-n_{ti}\}$, $V \in \{ 1,...,\sum_{j=1}^{n_{ep}} 4\mu_j \}$, $B \in \{ 1,..., n_{b}\}$, $n_{ep}$ is the number of EPs, $n_{ti}$ is the number of transition vertices whose 1-ring faces are either irregular faces or transition faces with zero parametric measure, $\mu_j$ is the valence of the $j$th EP, and $n_{b} = n - n_{ep} -n_{ti} + \sum_{j=1}^{n_{ep}} 4\mu_j$ is the number of basis functions in $\mathbb{S}_A^1$.
\end{itemize}

The spaces $\mathbb{S}^1_D$ and $\mathbb{S}^1_A$ are constructed in such a way that $\mathbb{S}^1_D \subseteq \mathbb{S}^1_A$. Both $\mathbb{S}^1_D$ and $\mathbb{S}^1_A$ satisfy properties (1)-(6). In addition,  $\mathbb{S}^1_A$ satisfies property (7).

The basis functions are specified by the extraction operators defined in each element of the elemental T-mesh. The extraction operators of elements that are within regular faces are the same for $\mathbb{S}^1_D$ and $\mathbb{S}^1_A$, but for irregular and transition faces are different. Since we have not changed how we deal with T-junctions in regular faces, we refer to our previous work for details \cite{casquero2020seamless}. We focus here on how to deal with irregular and transition faces when multiple EPs per face are allowed.

\begin{figure} [t!]
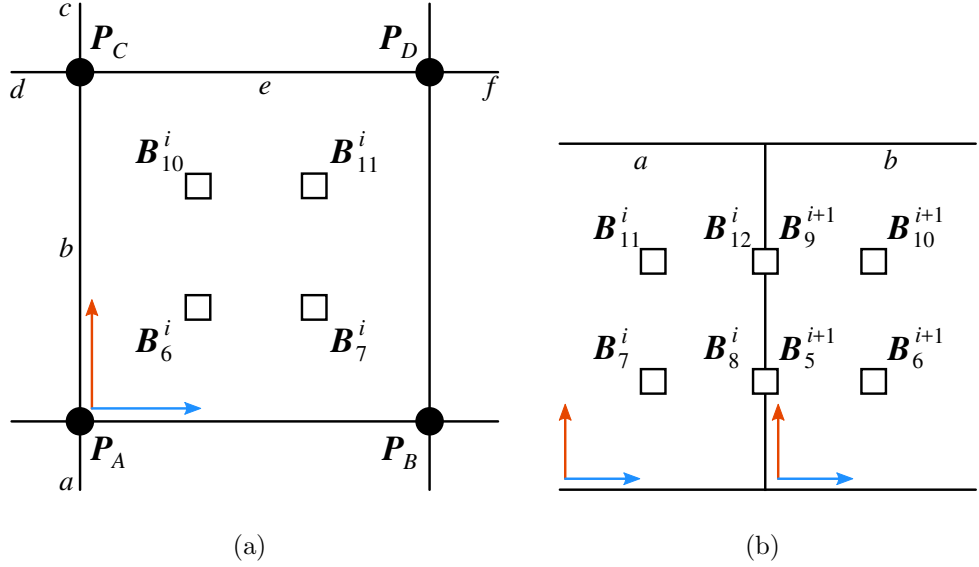
 
 \centering
 \subfigure[]{\includegraphics[scale=0.65]{/New_Figures_Kuanren/Figure3a.pdf}}
 \subfigure[]{\includegraphics[scale=0.65]{/New_Figures_Kuanren/Figure3b.pdf}} \\
%\caption{(a) Labels for Eqs. \eqref{firsteq}-\eqref{4theq}. (b) Labels for Eqs. \eqref{5theq}-\eqref{6theq}.}
\caption{(a) Face B\'ezier control points are defined in terms of spline control points using Eqs. \eqref{firsteq}-\eqref{4theq}. (b) Edge B\'ezier control points that are not located at the boundary of the elemental T-mesh are defined in terms of adjacent face B\'ezier control points using Eqs. \eqref{5theq}-\eqref{6theq}.}
\label{cpunstructured}
\end{figure}

\begin{figure} [t!]
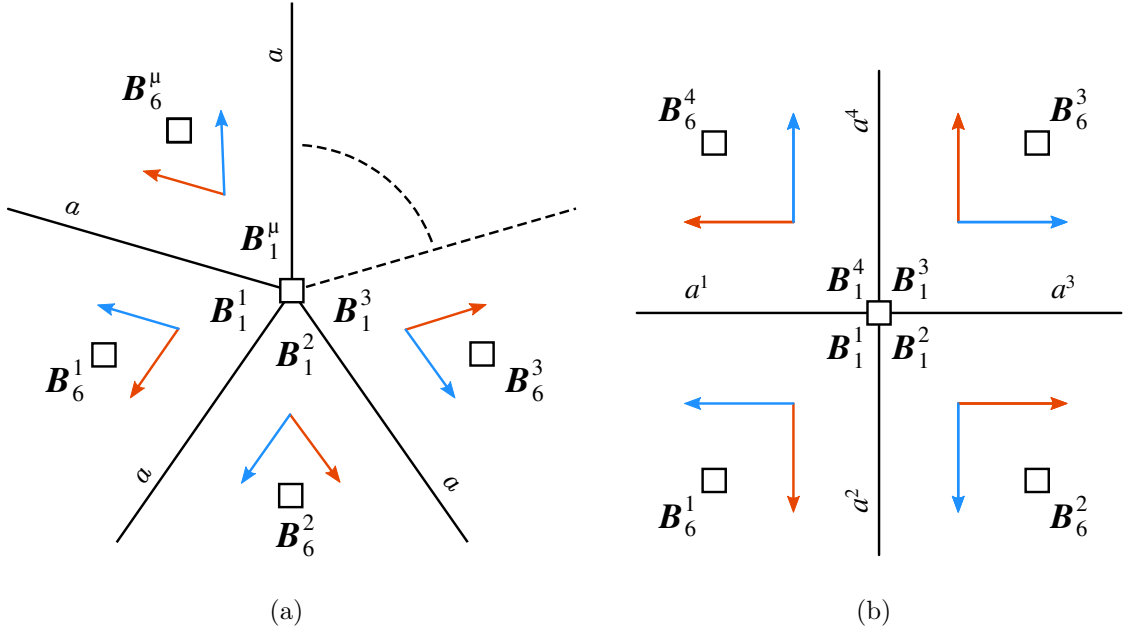
 
 \centering
 \subfigure[]{\includegraphics[scale=0.65]{/New_Figures_Kuanren/Figure4b.pdf}} 
 \subfigure[]{\includegraphics[scale=0.65]{/New_Figures_Kuanren/Figure4a.pdf}}\\
%\caption{(a) Labels for Eq. \eqref{7theq}. (b) Labels for Eq. \eqref{lasteq}.}
\caption{(a)-(b) Vertex B\'ezier control points that are not located at the boundary of the elemental T-mesh are defined in terms of adjacent face B\'ezier control points using Eqs. \eqref{7theq}-\eqref{lasteq}.}
\label{cpunstructured2}
\end{figure}

\subsubsection{Irregular and transition faces in design}

\begin{figure} [t!]
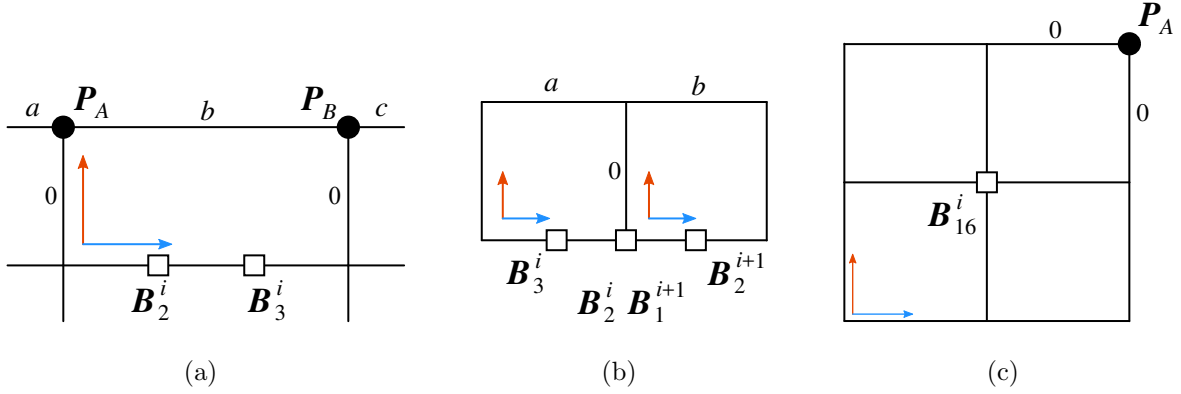
 
 \centering
 \subfigure[]{\includegraphics[scale=0.53]{/New_Figures_Kuanren/Figure5a.pdf}} \hspace*{+2.0mm}
 \subfigure[]{\includegraphics[scale=0.53]{/New_Figures_Kuanren/Figure5b.pdf}}  \hspace*{+2.0mm}
  \subfigure[]{\includegraphics[scale=0.53]{/New_Figures_Kuanren/Figure5c.pdf}} \\
%\caption{(a) Labels for Eqs. \eqref{9theq}-\eqref{10theq}. (b) Labels for Eq. \eqref{11theq}. (c) Labels for Eq. \eqref{lasteq2}.}
\caption{(a) Edge B\'ezier control points that are located at the boundary of the elemental T-mesh are defined in terms of adjacent spline control points using Eqs. \eqref{9theq}-\eqref{10theq}. (b) Vertex B\'ezier control points that are located at the boundary of the elemental T-mesh and not placed at a corner of the elemental T-mesh are defined in terms of adjacent edge B\'ezier control points using Eq. \eqref{11theq}. (c) Vertex B\'ezier control points that are located at the boundary of the elemental T-mesh and placed at the corners of the elemental T-mesh are equal to the spline control points placed at the corners of the control net.}
\label{cpunstructured3}
\end{figure}

We begin associating only one element to each face and classify B\'ezier control points in face, edge, and vertex B\'ezier control points. Following \cite{Scott2013, toshniwal2017smooth}, face B\'ezier control points are initially defined in terms of spline control points as

\begin{equation}   \label{firsteq} 
\mathbf{B}_6^i = \frac{ (b+c)(e+f) \mathbf{P}_A + (b+c) d \mathbf{P}_B + a (e+f) \mathbf{P}_C + a d \mathbf{P}_D }{(a+b+c)(d+e+f)} \text{,} 
\end{equation}

\begin{equation}    
\mathbf{B}_7^i = \frac{ (b+c)f \mathbf{P}_A + (b+c) (d+e) \mathbf{P}_B + a f \mathbf{P}_C + a (d+e) \mathbf{P}_D }{(a+b+c)(d+e+f)} \text{,} 
\end{equation}

\begin{equation}    
\mathbf{B}_{10}^i = \frac{  c(e+f) \mathbf{P}_A + c d \mathbf{P}_B + (a +b) (e+f) \mathbf{P}_C + (a +b) d \mathbf{P}_D }{(a+b+c)(d+e+f)} \text{,} 
\end{equation}

\begin{equation}     \label{4theq} 
\mathbf{B}_{11}^i = \frac{ c f \mathbf{P}_A + c ( d + e) \mathbf{P}_B + (a + b) f \mathbf{P}_C + ( a + b) ( d +e ) \mathbf{P}_D }{(a+b+c)(d+e+f)} \text{.} 
\end{equation}

\noindent Edge B\'ezier control points that are not located at the boundary of the elemental T-mesh are defined in terms of adjacent face B\'ezier control points as
\begin{figure} [t!]
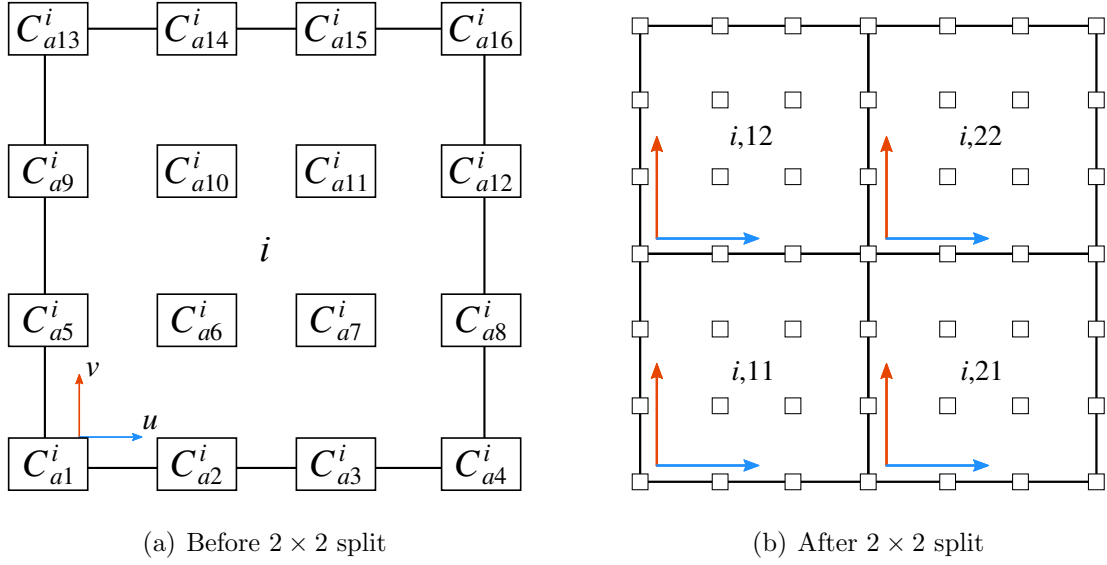
 
 \centering
 \subfigure[Before $2 \times 2$ split]{\includegraphics[scale=0.58]{/New_Figures_Kuanren/Figure6a.pdf}} \hspace*{+6mm}
 \subfigure[After $2 \times 2$ split]{\includegraphics[scale=0.60]{/New_Figures_Kuanren/Figure6b.pdf}} \\
\caption{(a) Extraction coefficients and local coordinate system before the $2 \times 2$ split. (b) Extraction coefficients and local coordinate system after the $2 \times 2$ split.}
\label{bezier}
\end{figure}

\begin{equation}    \label{5theq}
\mathbf{B}_{8}^i = \mathbf{B}_{5}^{i+1} = \frac{b}{a+b} \mathbf{B}_{7}^i + \frac{a}{a+b} \mathbf{B}_{6}^{i+1} \text{,} 
\end{equation}

\begin{equation}    \label{6theq}
\mathbf{B}_{12}^i = \mathbf{B}_{9}^{i+1} = \frac{b}{a+b} \mathbf{B}_{11}^i + \frac{a}{a+b} \mathbf{B}_{10}^{i+1} \text{.} 
\end{equation}

\noindent Vertex B\'ezier control points that are not located at the boundary of the elemental T-mesh are defined in terms of adjacent face B\'ezier control points as

\begin{equation}   \label{7theq} 
\mathbf{B}_{1}^1 = \mathbf{B}_{1}^2 = ... = \mathbf{B}_{1}^{\mu} = \frac{1}{\mu} \sum_{j=1}^{\mu} \mathbf{B}_{6}^{j} \quad \text{if} \quad \mu \neq 4 \text{,} 
\end{equation}

\begin{equation}   \label{lasteq} 
\mathbf{B}_{1}^1 = \mathbf{B}_{1}^2 = ... = \mathbf{B}_{1}^{\mu} = \sum_{j=1}^{\mu} \frac{a^{j+2} a^{j-1}}{(a^{j+2} + a^{j}) (a^{j-1} + a^{j+1}) }\mathbf{B}_{6}^{j} \quad \text{if} \quad \mu = 4 \text{.} 
\end{equation}

\noindent The labels used in Eqs. \eqref{firsteq}-\eqref{lasteq} correspond to Figs. \ref{cpunstructured} and \ref{cpunstructured2}. Following \cite{Wei2018}, edge B\'ezier control points that are located at the boundary of the elemental T-mesh are defined in terms of adjacent spline control points as

\begin{equation}    \label{9theq} 
\mathbf{B}_{2}^i = \frac{b+c}{a+b + c} \mathbf{P}_A + \frac{a}{a+b + c} \mathbf{P}_B \text{,} 
\end{equation}

\begin{equation}   \label{10theq} 
\mathbf{B}_{3}^i =  \frac{c}{a+b+c} \mathbf{P}_A + \frac{a+b}{a+b+c} \mathbf{P}_B \text{.} 
\end{equation}

\noindent Vertex B\'ezier control points that are located at the boundary of the elemental T-mesh and not placed at a corner of the elemental T-mesh are defined in terms of adjacent edge B\'ezier control points as

\begin{equation}   \label{11theq} 
\mathbf{B}_{4}^i = \mathbf{B}_{1}^{i+1} = \frac{b}{a+b} \mathbf{B}_{3}^i + \frac{a}{a+b} \mathbf{B}_{2}^{i+1}  \text{.} 
\end{equation}

\noindent Vertex B\'ezier control points that are located at the boundary of the elemental T-mesh and placed at the corners of the elemental T-mesh are equal to the spline control points placed at the corners of the control net

\begin{equation}   \label{lasteq2} 
\mathbf{B}_{16}^i = \mathbf{P}_A \text{.} 
\end{equation}

\noindent The labels used in Eqs. \eqref{9theq}-\eqref{lasteq2} correspond to Fig. \ref{cpunstructured3}. Eqs. \eqref{firsteq}-\eqref{lasteq2} define initial expressions for the extraction operators of the elements within irregular and transition faces. 

Eqs. \eqref{firsteq}-\eqref{lasteq2} result in edges that are $C^2$-continuous with the exception of spoke edges, which are only $C^0$-continuous. To reach $C^1$ continuity across the spoke edges, we modify the extraction operators of the elements within irregular faces using the split-then-smoothen approach \cite{toshniwal2017smooth, nguyen2016refinable, reif1997refineable}. This approach is applied to each basis function that has support on the 1-ring faces of at least one EP as follows:

\begin{figure} [t!]
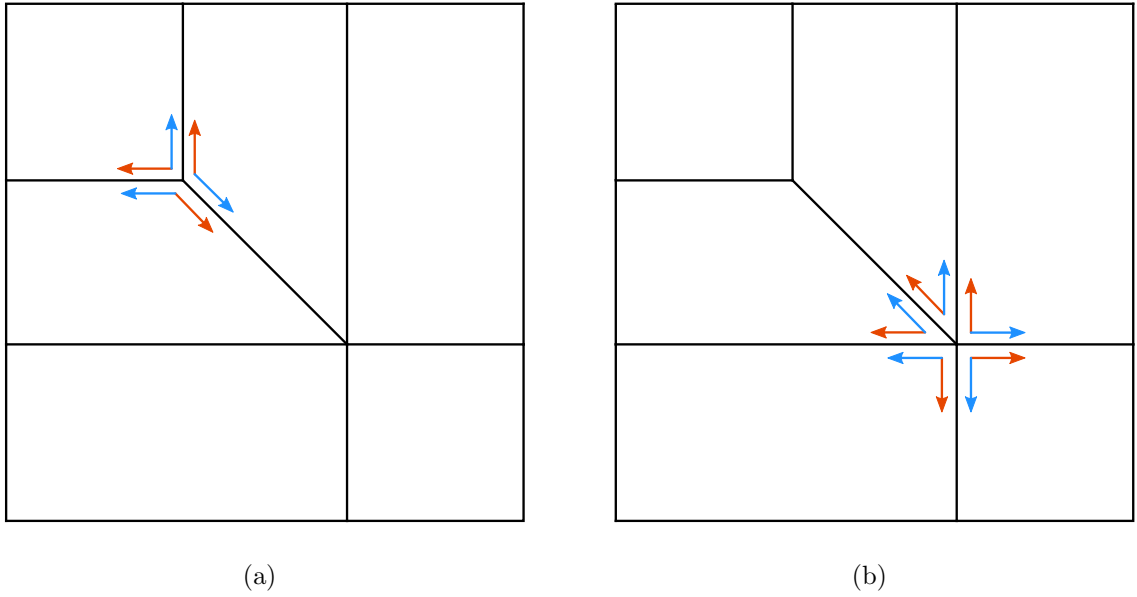
 
 \centering
 \subfigure[]{\includegraphics[scale=0.62]{/New_Figures_Kuanren/Figure7a.pdf}} \hspace*{+3mm}
 \subfigure[]{\includegraphics[scale=0.62]{/New_Figures_Kuanren/Figure7b.pdf}} \\
\caption{ 1-ring faces of two EPs with valences 3 and 5. (a) Local axes for the EP with valence 3. (b) Local axes for the EP with valence 5.}
\label{3ep5ep}
\end{figure}

\begin{figure} [t!]
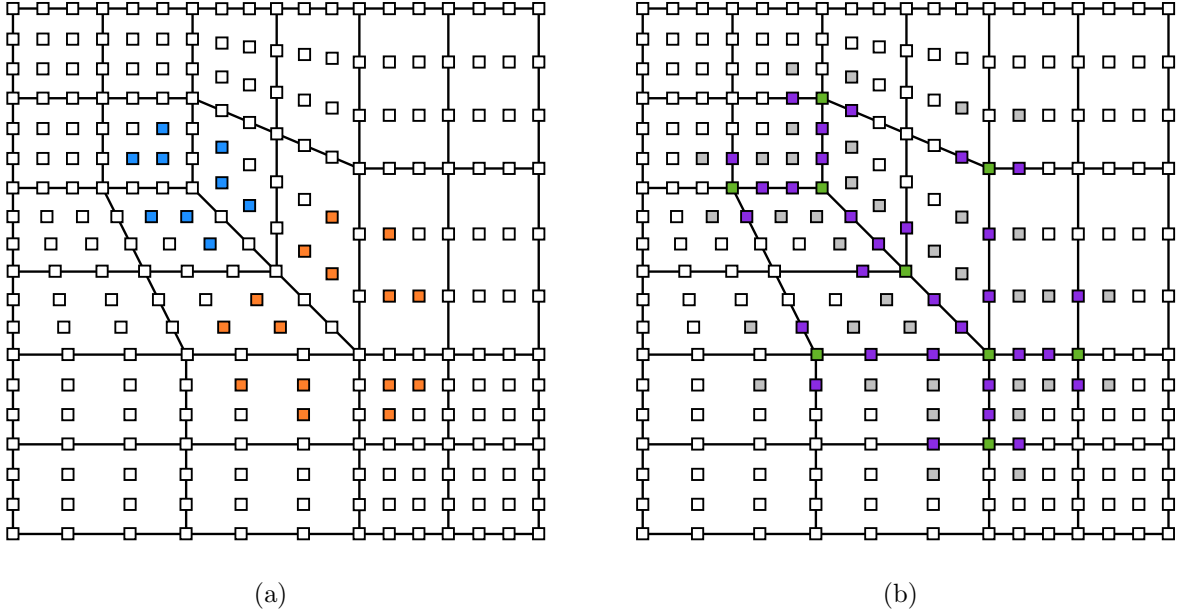
 
 \centering
 \subfigure[]{\includegraphics[scale=0.63]{/New_Figures_Kuanren/Figure7c.pdf}} \hspace*{+3mm}
 \subfigure[]{\includegraphics[scale=0.63]{/New_Figures_Kuanren/Figure7d.pdf}} \\
\caption{(Color online) 1-ring faces of two EPs with valences 3 and 5 after the $2 \times 2$ split. (a) The face extraction coefficients that are changed by the smoothing matrix of the EP with valence 3 and valence 5 are plotted in blue and orange, respectively. (b) The edge and vertex extraction coefficients that need to be recomputed after application of the smoothing matrix are plotted in violet and green, respectively. The face extraction coefficients that intervene in the aforementioned recomputations are plotted in gray.}
\label{sthens}
\end{figure}

\begin{enumerate}[label=(\alph*)]

\item For each irregular face in which the basis function has support, the extraction coefficients that define the basis function on that irregular face are collected in a row vector $\mathbf{C}_a^{i} = (C_{a1}^i, C_{a2}^i, ..., C_{a16}^i)$ following the numbering given by local axes as shown in Fig. \ref{bezier} (a).

\item For each irregular face in which the basis function has support, the basis function is refined at parametric lines $u = a / 2$ and $v = a / 2$ using the de Casteljau algorithm (see Appendix A). Therefore, each irregular face has four elements now and the basis function is defined in each element by 16 extraction coefficients as shown in Fig. \ref{bezier} (b). The extraction coefficients that define the basis function on these four elements, denoted by $\mathbf{C}_{a}^{i,pq}$ with $p,q \in \{1,2\}$, are obtained as follows

\begin{equation}
	\mathbf{C}^{i,pq}_a = \mathbf{C}_a^{i} \left( \mathbf{S}_q \otimes \mathbf{S}_p \right) \text{,}
\end{equation}

\noindent with

\begin{equation}
	\mathbf{S}_1 = 
	\begin{pmatrix}
	1 & \frac{1}{2} & \frac{1}{4} & \frac{1}{8}\\
	0 & \frac{1}{2} & \frac{1}{2} & \frac{3}{8}\\
	0 & 0 & \frac{1}{4} & \frac{3}{8}\\
	0 & 0 & 0 & \frac{1}{8}
	\end{pmatrix} ,\; \;
		\mathbf{S}_2 = 
		\begin{pmatrix}
	\frac{1}{8} & 0 & 0 & 0 \\
	\frac{3}{8} & \frac{1}{4} & 0 & 0 \\
	\frac{3}{8} & \frac{1}{2} & \frac{1}{2} & 0\\
	\frac{1}{8} & \frac{1}{4} & \frac{1}{2} & 1
	\end{pmatrix}
	\text{.}
\end{equation}

\noindent where $\otimes$ represents the Kronecker product of two matrices.

\item For each EP within the support of the basis function, the D-patch framework \cite{reif1997refineable} is used. Assuming we place local axes at each EP (an example with two EPs is shown in Fig. \ref{3ep5ep}), the extraction coefficients $C_{a6}^{i,11}$, $C_{a7}^{i,11}$, and $C_{a10}^{i,11}$ with $i$ being cyclic in $\{1,2,...,\mu_j \}$ are modified using a smoothing matrix $\mathbf{\Pi}^+$ with dimension $3\mu_j \times 3\mu_j$ as follows
\begin{equation}
		\begin{pmatrix}
			\mathbf{a}_{6}\\
			\mathbf{a}_{7}\\
			\mathbf{a}_{10}
		\end{pmatrix} = 
		\mathbf{\Pi}^+ \begin{pmatrix}
		\mathbf{A}_{6}\\
		\mathbf{A}_{7}\\
		\mathbf{A}_{10}
		\end{pmatrix}
			\text{,}
\end{equation}

\noindent with

	\begin{equation}
		\mathbf{A}_{6} = 
		\begin{pmatrix}
			C^{1,11}_{a6}\\
			C^{2,11}_{a6}\\
			\vdots\\
			C^{\mu_j,11}_{a6}
		\end{pmatrix},\; \;
		\mathbf{A}_{7} = 
		\begin{pmatrix}
		C^{1,11}_{a7}\\
		C^{2,11}_{a7}\\
		\vdots\\
		C^{\mu_j,11}_{a7}
		\end{pmatrix},\; \;
		\mathbf{A}_{10} = 
		\begin{pmatrix}
		C^{1,11}_{a10}\\
		C^{2,11}_{a10}\\
		\vdots\\
		C^{\mu_j,11}_{a10}
		\end{pmatrix}\text{.}
	\end{equation}
	
	\begin{equation}
\mathbf{\Pi}^+ = 
\begin{pmatrix}
\mathbf{\Pi}^+_1 & \mathbf{\Pi}^+_2 & \mathbf{\Pi}^+_3 \\
\mathbf{\Pi}^+_4 & \mathbf{\Pi}^+_5 & \mathbf{\Pi}^+_6 \\
\mathbf{\Pi}^+_7 & \mathbf{\Pi}^+_8 & \mathbf{\Pi}^+_9
\end{pmatrix},
\end{equation}
\begin{equation}
(\mathbf{\Pi}_I^+)_{JK} = (\mathbf{p}_{I})_{ \text{mod}(J-K,\mu_j)}  \text{,}
\end{equation}
	\begin{equation}
	\begin{aligned}
	&(\mathbf{p}_1)_J = (\mathbf{p}_4)_J = (\mathbf{p}_7)_J = 0 \text{,}\\
	&(\mathbf{p}_2)_J = (\mathbf{p}_3)_J = \frac{1}{2\mu_j} \text{,}\\
	&(\mathbf{p}_5)_J = (\mathbf{p}_9)_J = \frac{1}{2\mu_j}\left(1 + \cos(J\phi_\mu)\right) \text{,}\\
	&(\mathbf{p}_6)_J = \frac{1}{2\mu_j}\left(1 + \cos(2\psi + J\phi_\mu)\right) \text{,}\\
	&(\mathbf{p}_8)_J = \frac{1}{2\mu_j}\left(1 + \cos(2\psi - J\phi_\mu)\right) \text{,}
	\end{aligned}
	\end{equation}
	where $I \in \{1,2,...,9\}$, $J,K \in \{0,1,...,\mu_j - 1\}$, $\mathbf{\Pi}^+_I$ is a circulant matrix with dimension $\mu_j \times \mu_j$, $\mathbf{p}_{I}$ is the vector of length $\mu_j$ that defines the circulant matrix $\mathbf{\Pi}^+_I$, $\text{mod}(a,b)$ returns the remainder after division of $a$ by $b$, $\phi_\mu = 2\pi/\mu_j$, $\psi = \arg\left((1+\iota \beta \sin(\phi_\mu))e^{-\iota\phi_\mu/2}\right)$, $\iota = \sqrt{-1}$, and we choose $\beta = 0.4$. The extraction coefficients $C_{a6}^{i,11}$, $C_{a7}^{i,11}$, and $C_{a10}^{i,11}$ with $ i \in \{1,2,...,\mu_j \}$ are replaced by their modified values obtained from the column vectors $\mathbf{a}_6$, $\mathbf{a}_7$, and $\mathbf{a}_{10}$, respectively. For the example shown in Fig. \ref{3ep5ep}, Fig. \ref{sthens} (a) colors the face extraction coefficients that are changed by the smoothing matrix of the EP with valence 3 and valence 5 in blue and orange, respectively. Note that the face extraction coefficients being modified by the presence of an EP will not be affected by any other EP.

\begin{figure} [t!] 
 \centering
 \subfigure[$\mathbb{S}_D^1$]{\includegraphics[scale=1.6]{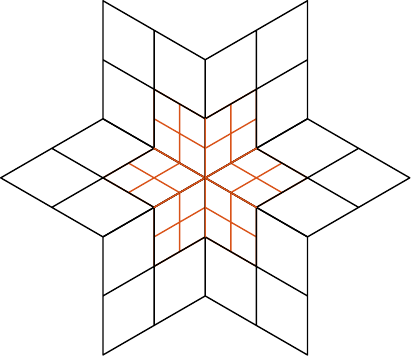}} \hspace*{+6mm}
 \subfigure[$\mathbb{S}_A^1$]{\includegraphics[scale=1.6]{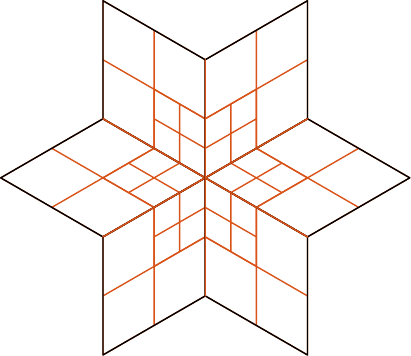}} \\
\caption{(Color online) (a)-(b) Continuity of the basis at element boundaries in the 2-disk faces of an EP with valence 6 for $\mathbb{S}_D^1$ and $\mathbb{S}_A^1$, respectively. Black and orange lines represent $C^2$ and $C^1$ continuity lines, respectively.}
\label{continuity}
\end{figure}

\item As a consequence of the face extraction coefficients that have been changed by smoothing matrices, Eqs. \eqref{5theq}-\eqref{lasteq} need to be imposed again. In other words, certain edge and vertex extraction coefficients need to be recomputed. For the example shown in Fig. \ref{3ep5ep}, Fig. \ref{sthens} (b) colors the edge extraction coefficients that need to be recomputed, the vertex extraction coefficients that need to be recomputed, and the face extraction coefficients that intervene in the recomputation in violet, green, and gray, respectively.
\end{enumerate}

For an EP with valence 6, the final continuity across each element boundary in $\mathbb{S}^1_D$ is specified in Fig. \ref{continuity} (a).

\subsubsection{Irregular and transition faces in analysis}

\begin{figure} [t!] 
 \centering
 \includegraphics[scale=1.1]{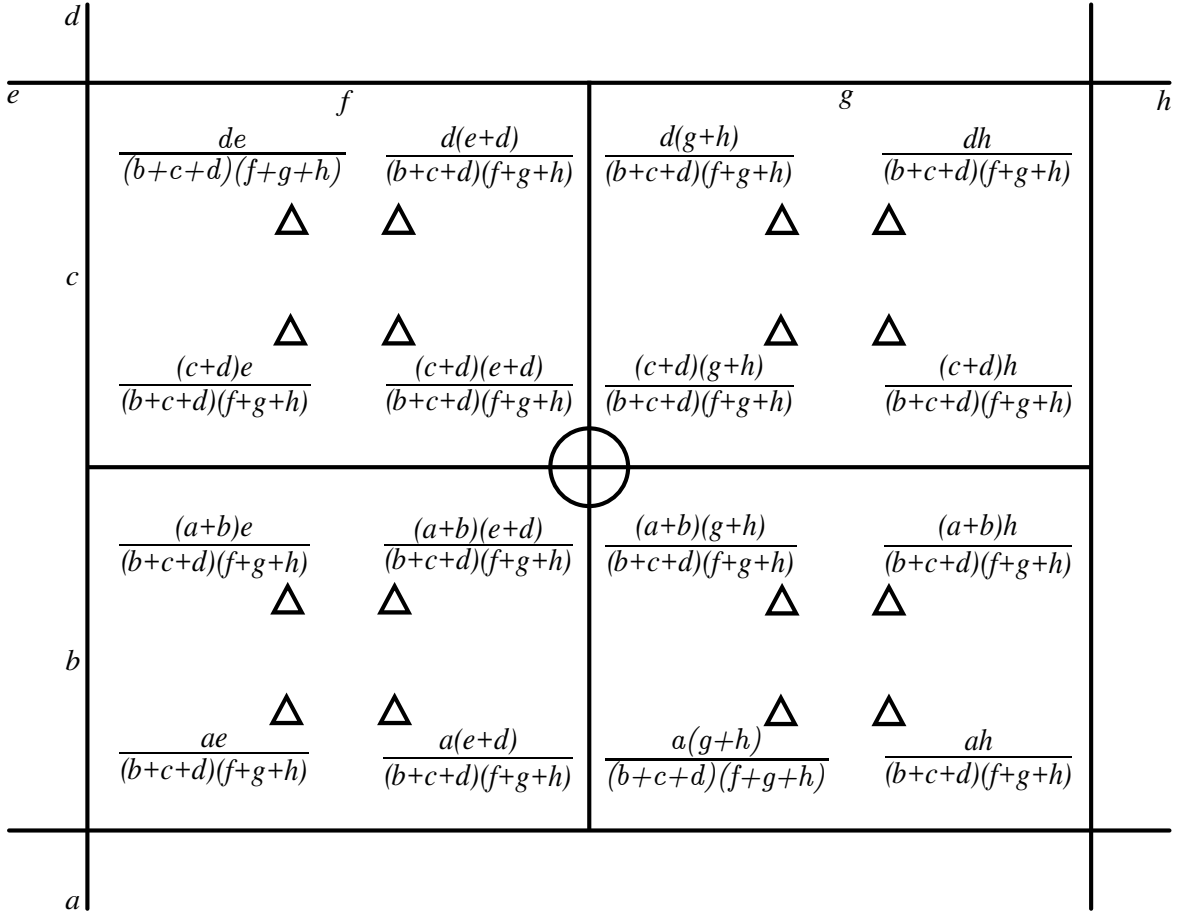} \\
\caption{The children $m_i$ of a transition function $\hat{M}$ are indicated with triangles along with the coefficients $c_i$ needed to obtain the transition function $\hat{M}$ as a linear combination of $m_i$.}
\label{analysisspacemod}
\end{figure}

\begin{figure} [t!]
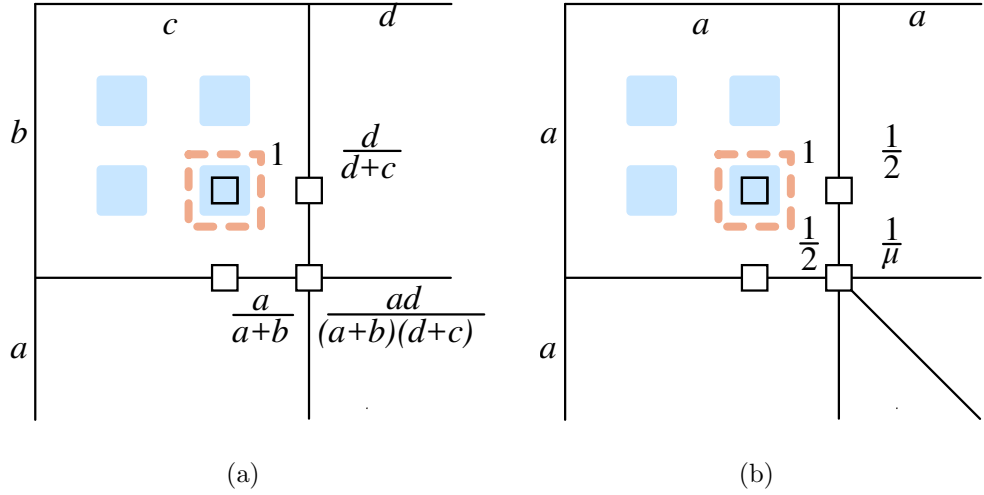
 
 \centering
 \subfigure[]{\includegraphics[scale=0.6]{/New_Figures_Kuanren/Figure11a.pdf}}
 \subfigure[]{\includegraphics[scale=0.6]{/New_Figures_Kuanren/Figure11b.pdf}} \\
\caption{The four nonzero extraction coefficients associated with each face-based spline basis function are indicated. (a) Coefficients when the vertex is not an EP. (b) Coefficients when the vertex is an EP.}
\label{analysisspacemod2}
\end{figure}

Due to the fact that both vertex-based and face-based basis functions are present in analysis space, \textit{truncation} \cite{Giannelli2012,Wei2015, Wei2016,Wei2017b,Wei2018} is used to recover partition of unity. The basis functions that need truncation are the transition basis functions. A transition basis function $\hat{M}$ can be obtained as a linear combination of 16 $C^1$-continuous splines $m_i$ as follows
\begin{equation} \label{trunaa}
\hat{M}= \sum_{i = 1}^{16} c_{i} m_i \text{.}
\end{equation}
The functions $m_i$ are called the children of $\hat{M}$ and the values of the coefficients $c_i$ are indicated in Fig. \ref{analysisspacemod}. The children $m_i$ associated with irregular faces are equal to the face-based basis functions of $\mathbb{S}^1_A$. These children are called  \textit{active} children and the others are called \textit{passive} children. The truncated basis function $\hat{M}^t$ is obtained by discarding its active children
\begin{equation} \label{trunbb}
\hat{M}^t= \sum_{ i \in\mathcal{F}_p} c_{i} m_i \text{,}
\end{equation}
where $\mathcal{F}_p$ represents the index set of passive children. Note that depending on the distribution of EPs in the 1-ring vertices of the transition basis function, one, two, three or even all four faces in Fig. \ref{analysisspacemod} may be irregular faces. When all four faces in Fig. \ref{analysisspacemod} are irregular, all the children of the transition basis function are active and therefore the transition basis function is not included in the basis of $\mathbb{S}^1_A$.

The extraction operators are computed following the next steps:

\begin{itemize}
\item Initial extraction operators for vertex-based spline functions are obtained through Eqs. \eqref{firsteq}-\eqref{lasteq2}, but discarding the contributions from irregular and transition control points.
\item Face-based spline basis functions are $C^1$-continuous and have only four nonzero spline extraction coefficients whose values are specified in Fig. \ref{analysisspacemod2}. When the vertex extraction coefficient of one of the face-based spline basis functions is not an EP, that function is just a $C^1$-continuous bi-cubic B-spline.
\item Transition basis functions are truncated, that is, their active children, which are the face-based basis functions, are discarded. The extraction coefficients of truncated transition functions are obtained multiplying the coefficients $c_{i}$ with $i \in\mathcal{F}_p$ (see Eq. \eqref{trunbb} and Fig. \ref{analysisspacemod}) by the coefficients in Fig. \ref{analysisspacemod2}. 
\item The split-then-smoothen approach explained in steps (a)-(d) is applied to each vertex-based and face-based spline basis function with support on the 1-ring faces of at least one EP.
\end{itemize}

For an EP with valence 6, the final continuity across each element boundary in $\mathbb{S}^1_A$ is specified in Fig. \ref{continuity} (b).

\subsection{Control points}

\textit{Control points} determine the geometry of a T-spline surface. Control points have the same role as nodes in standard finite elements, but are not interpolatory. A control point is associated with each basis function in both $\mathbb{S}^1_D$ and $\mathbb{S}^1_A$. From now on, the control points of $\mathbb{S}^1_D$, the vertex-based control points of $\mathbb{S}^1_A$, the face-based control points of $\mathbb{S}^1_A$, and all the control points of $\mathbb{S}^1_A$ will be denoted by $\mathbf{P}_L$, $\mathbf{\hat{Q}}_V$, $\mathbf{\tilde{Q}}_F$, and $\mathbf{Q}_B$, respectively.

The control points of $\mathbb{S}^1_D$ form a control net, which has the same connectivity as the T-mesh, and moving these control points modifies the T-spline surface intuitively. Fig. \ref{controlpoints} (a) plots a possible control net for the T-mesh shown in Fig. \ref{tmesh} (a). Once a satisfactory geometry has been reached by moving the control points of $\mathbb{S}^1_D$, a set of control points for the basis functions of $\mathbb{S}^1_A$ that preserves the geometry can be obtained since $\mathbb{S}^1_D \subseteq \mathbb{S}^1_A$. This set of control points is obtained by:

\begin{figure} [t!]
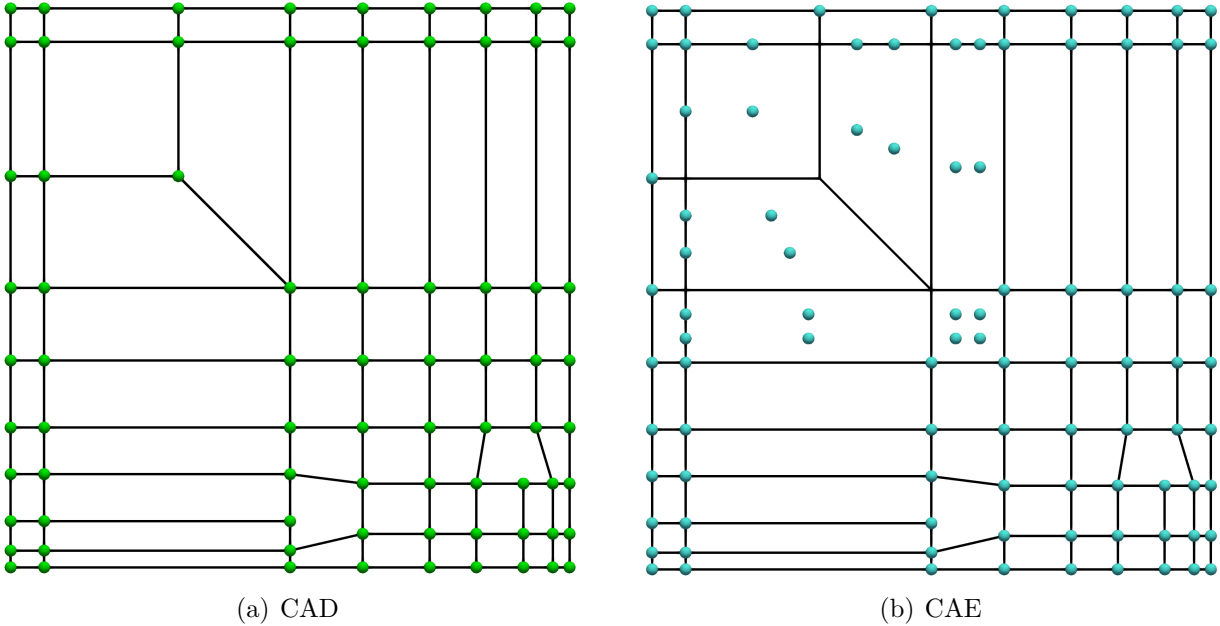
 
 \centering
 \subfigure[CAD]{\includegraphics[scale=.2]{/New_Figures_Kuanren/Geometric_Modeling.png}} \hspace*{+6mm}
 \subfigure[CAE]{\includegraphics[scale=.2]{/New_Figures_Kuanren/Engineering_Analysis.png}} \\
\caption{(a) The control points of $\mathbb{S}_D^1$ are represented with green circles and the control net of $\mathbb{S}_D^1$ is represented by solid black lines. (b) The control points of $\mathbb{S}_A^1$ are represented with blue circles. The control net of $\mathbb{S}_D^1$ is also plotted.}
\label{controlpoints}
\end{figure}

\begin{itemize}
\item Making the vertex-based control points of $\mathbb{S}^1_A$ ($\mathbf{\hat{Q}}_V$) equal to the equivalent control points of $\mathbb{S}^1_D$.
\item For each irregular face, making the face-based control points of $\mathbb{S}^1_A$ ($\mathbf{\tilde{Q}}_F$) equal to the face B\'ezier control points of the geometry in that face.
\end{itemize}

\noindent Fig. \ref{controlpoints} (b) plots the control points of $\mathbb{S}_A^1$ associated with the control points of $\mathbb{S}_D^1$ shown in Fig. \ref{controlpoints} (a).

The AST-spline surface is obtained by mapping each element of the elemental T-mesh into the Eucledian space as follows

\begin{equation}
\vec x^e \left( \vec {\xi} \right) = \sum_{a=1}^{n^e_D} \vec{P}^e_a N^e_a \left( \vec {\xi} \right)  = \sum_{a=1}^{n^e_A} \vec{Q}^e_a M^e_a \left( \vec {\xi} \right)   \quad \forall e\in\{1,2,...,n_{el}\}, \quad\vec\xi\in\square \text{,}
\end{equation}

\noindent where $n^e_D$ and $n^e_A$ are the number of basis functions with support on element $e$ in $\mathbb{S}^1_D$ and $\mathbb{S}^1_A$, respectively. The B\'ezier mesh is obtained by plotting the element boundaries over the T-spline surface. Fig. \ref{beziermesh} plots the B\'ezier mesh associated with the T-mesh, the knot span configuration, and the control points shown in Fig. \ref{tmesh} (a), Fig. \ref{knotspans} (a), and Fig. \ref{controlpoints}, respectively.

\begin{figure} [t!] 
\centering
\includegraphics[width=7cm]{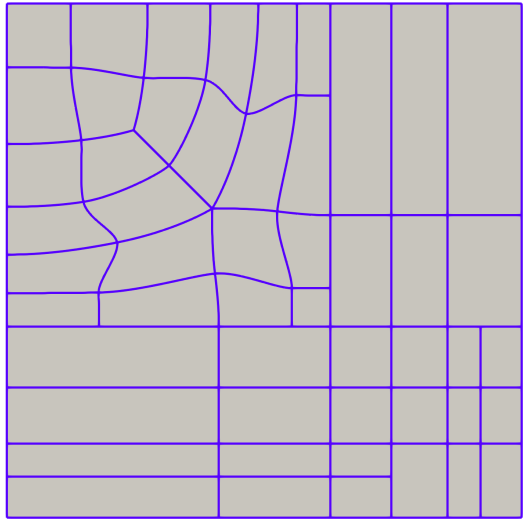}
\caption{(Color online) B\'ezier mesh associated with the T-mesh, the knot span configuration, and the control points shown in Fig. \ref{tmesh} (a), Fig. \ref{knotspans} (a), and Fig. \ref{controlpoints}, respectively.} 
\label{beziermesh}
\end{figure}

\subsection{Refinement}

Allowing multiple EPs per face does not change the way in which refinement is performed. Algorithms to perform local refinement in regular faces are explained in \cite{Scott2012}. How to refine irregular and transition faces in $\mathbb{S}^1_D$ is explained in \cite{toshniwal2017smooth} while how to refine irregular and transition faces in $\mathbb{S}^1_A$ is explained in \cite{casquero2020seamless}.

% Within the subset of ASTS, adding T-junctions within regular faces leads to nested spaces and the surface is not changed either geometrically or parametrically \cite{Li2014, bbs2015}. 

\section{Proofs}

In this section, we prove the properties of linear independence and non-negative partition of unity of the spaces $\mathbb{S}^1_D$ and $\mathbb{S}^1_A$. The linear independence and non-negative partition of unity in regular faces influenced by T-junctions are proven in \cite{Li2012, zhang2015linear, DaVeiga2013}. Therefore, in this section, we focus on irregular and transition faces influenced by EPs. To carry out the proofs, we denote as $\mathbb{S}^0_D$ and $\mathbb{S}^0_A$ the design and analysis space before the split-then-smoothen approach is applied, respectively. In the remaining of this section, the blending functions of $\mathbb{S}^0_D$, $\mathbb{S}^1_D$, $\mathbb{S}^0_A$, and $\mathbb{S}^1_A$ are denoted by $\{ N^0_L \}^{n}_{L=1}$, $\{ N^1_L \}^{n}_{L=1}$, $\{ M^0_B \}^{n_b}_{B=1}$, and $\{ M^1_B \}^{n_b}_{B=1}$, respectively.

\begin{lemma}\label{lemma:1}If the blending functions of $\mathbb{S}^0_D$ and $\mathbb{S}^0_A$ are linearly independent, then the blending functions of $\mathbb{S}^1_D$ and $\mathbb{S}^1_A$ are linearly independent.
\end{lemma}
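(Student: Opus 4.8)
The plan is to exploit the fact that the split-then-smoothen approach is, at the level of the extraction coefficients, a \emph{linear} operation: passing from $\mathbb{S}^0_D$ to $\mathbb{S}^1_D$ (and likewise from $\mathbb{S}^0_A$ to $\mathbb{S}^1_A$) replaces the vector of extraction coefficients of each blending function by its image under a fixed matrix — the composition of the $2\times 2$ de Casteljau refinement $\mathbf{S}_q\otimes\mathbf{S}_p$, the D-patch smoothing $\mathbf{\Pi}^+$ applied to the $(1,1)$-subelements at each EP, and the recomputation of the affected edge/vertex coefficients via Eqs. \eqref{5theq}--\eqref{lasteq}. Crucially, none of these operations mixes different blending functions: each step acts on the coefficient array of a single function $N^0_L$ (resp. $M^0_B$) and produces the coefficient array of $N^1_L$ (resp. $M^1_B$). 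So there is a well-defined linear map $T$ with $N^1_L = T(N^0_L)$ for all $L$, and the whole question reduces to showing that $T$ is injective on the span of the $\{N^0_L\}$ — equivalently, that a nontrivial dependency $\sum_L \gamma_L N^1_L \equiv 0$ forces $\sum_L \gamma_L N^0_L \equiv 0$, contradicting the hypothesis.

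The key steps, in order, are: (i) observe that the de Casteljau refinement step is a change of representation that does not alter the functions at all — it only subdivides elements — so it trivially preserves linear independence; (ii) for the smoothing step, note that away from the $2$-disk faces of the EPs the blending functions are unchanged, so any dependency among the $N^1_L$ restricts to a dependency among the $N^0_L$ on every element outside the union of $1$-ring faces of EPs; hence the coefficients $\gamma_L$ of functions supported entirely outside the $1$-ring faces already vanish, and more generally the restriction of the dependency to the "far" region pins down the $\gamma_L$ for all functions whose support is not contained in the smoothed region; (iii) it remains to handle the functions supported on the $1$-ring faces of EPs — here I would use that the smoothing matrix $\mathbf{\Pi}^+$ is applied independently at each EP (the paper explicitly notes that a face coefficient modified by one EP is untouched by any other), so the analysis localizes to a single EP; (iv) at a single EP, examine the Bézier coefficients of $\sum_L\gamma_L N^1_L$ on the $(1,1)$ subelements and use the structure of $\mathbf{\Pi}^+$ (it reproduces the affine functions and has a known kernel/image on the circulant decomposition) together with the already-determined outer coefficients to conclude the remaining $\gamma_L$ vanish, and then read off that $\sum_L\gamma_L N^0_L\equiv 0$.

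For the analysis space one additionally has the truncation of transition functions to reckon with, but truncation is again a linear, support-shrinking operation that commutes with the smoothing (truncation discards the active, i.e.\ face-based, children before smoothing), so the same reduction applies: a dependency among the $M^1_B$ yields one among the $M^0_B$ by the identical "far-region first, then localize to each EP" argument, now keeping track of the fact that discarded children reappear as honest face-based basis functions of $\mathbb{S}^1_A$.

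The main obstacle I anticipate is step (iv): showing that the smoothing matrix $\mathbf{\Pi}^+$, acting on the triples $(\mathbf{A}_6,\mathbf{A}_7,\mathbf{A}_{10})$ of face coefficients around a single EP, does not collapse any genuine dependency into the zero function once the boundary data (the unchanged outer coefficients, which are themselves constrained by the $C^1/C^2$ continuity relations) are fixed. This is essentially the D-patch injectivity argument of \cite{toshniwal2017smooth, reif1997refineable}, but it must now be verified in the presence of possibly several EPs on the same face — i.e.\ one must confirm that the constraints coming from neighboring EPs through the shared boundary coefficients are compatible and do not create new kernel directions. I expect this to follow from the locality remark in step (iii) plus a careful bookkeeping of which coefficients are shared, but it is the place where the "multiple EPs per face" generalization actually has to be used rather than cited.
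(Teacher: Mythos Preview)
Your high-level reduction is the same as the paper's: show that $\sum_B c_B M^1_B \equiv 0$ forces $\sum_B c_B M^0_B \equiv 0$. But your execution in step (iv) heads in the wrong direction, and the case that actually carries the ``multiple EPs per face'' content is not the one you anticipate.

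The paper never analyzes $\mathbf{\Pi}^+$ at all. Instead it proves the stronger statement: if the B\'ezier coefficients of $\sum_B c_B M^1_B$ that are \emph{not touched} by the smoothing step vanish, then all face B\'ezier coefficients of $\sum_B c_B M^0_B$ vanish. The point is that after the $2\times 2$ split, each irregular face carries many B\'ezier coefficients that $\mathbf{\Pi}^+$ never sees (only the three inner face coefficients of each $(1,1)$-subelement adjacent to an EP are smoothed). For an irregular face with at least one non-EP vertex, the nine unsmoothed coefficients in the subelement at that vertex are related to the nine pre-split face coefficients by a product of two full-rank $3\times 3$ de Casteljau matrices, so the pre-split face coefficients are recovered directly. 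No appeal to the structure of $\mathbf{\Pi}^+$, its kernel, or the D-patch injectivity arguments of Reif/Toshniwal et al.\ is needed; those would be the hard way.

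The genuinely new case --- and the one your localization-to-a-single-EP plan in step (iii) misses --- is a \emph{totally irregular} face, where all four vertices are EPs. Here only the four central post-split coefficients escape smoothing, and they are too few to invert the de Casteljau map on that face alone. The paper's argument is non-local: it follows a horizontal (then vertical) strip of adjacent totally irregular faces until the strip either terminates at a partially irregular face or closes up on itself, and writes the relation between the surviving post-split coefficients and the pre-split face coefficients along the whole strip as a tridiagonal (or circulant-tridiagonal) system with entries $\tfrac{1}{8},\tfrac{5}{8},\tfrac{1}{4}$. Diagonal dominance gives full rank, and the pre-split coefficients along the strip are recovered. This strip argument is where the ``multiple EPs per face'' generalization actually lives; your proposal does not anticipate it, and the single-EP localization you sketch would not suffice when every vertex of a face is extraordinary.
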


\begin{proof}

$\{ N^1_L \}^n_{L=1}$ and $\{ M^1_B \}^{n_b}_{B=1}$ are obtained by applying the split-then-smoothen approach to $\{ N^0_L \}^n_{L=1}$ and $\{ M^0_B \}^{n_b}_{B=1}$, respectively. Therefore, this proof boils down to show that the split-then-smoothen approach preserves linear independence. In the remaining of this proof, we will work with $\{ M^1_B \}^{n_b}_{B=1}$ and $\{ M^0_B \}^{n_b}_{B=1}$, but the same reasoning can be applied to $\{ N^1_L \}^n_{L=1}$ and $\{ N^0_L \}^{n}_{L=1}$.

We wish to prove that if $\sum c_B M^1_B = 0$, then $\sum c_B M^0_B = 0$, which would imply that $ c_B = 0 \; \forall B \in \{1,2,...,n_b\}$ since $\{ M^0_B \}^{n_b}_{B=1}$ are assumed to be linearly independent. We observe that

\begin{itemize}
\item even though the spline coefficients $c_B$ are real numbers instead of vectors with three coordinates (as in the case of control points), B\'ezier extraction can be applied to the spline coefficients $c_B$ to obtain B\'ezier coefficients as in Eq. \eqref{bextractioncoef},
\item the zero function has zero B\'ezier coefficients since Bernstain polynomials form a basis, and
\item the function $\sum c_B M^0_B$ having zero face B\'ezier coefficients and zero B\'ezier coefficients placed at the boundary of the elemental T-mesh is equivalent to $\sum c_B M^0_B$ being the zero function due to Eqs. \eqref{5theq}-\eqref{lasteq2}\footnote{Eqs. \eqref{5theq}-\eqref{lasteq2} are true for any function of the spaces $\mathbb{S}^0_D$, $\mathbb{S}^1_D$, $\mathbb{S}^0_A$, and $\mathbb{S}^1_A$.}.

\end{itemize}

\begin{figure} [htbp]
 \centering
 \subfigure[Before split]{\includegraphics[scale=0.41]{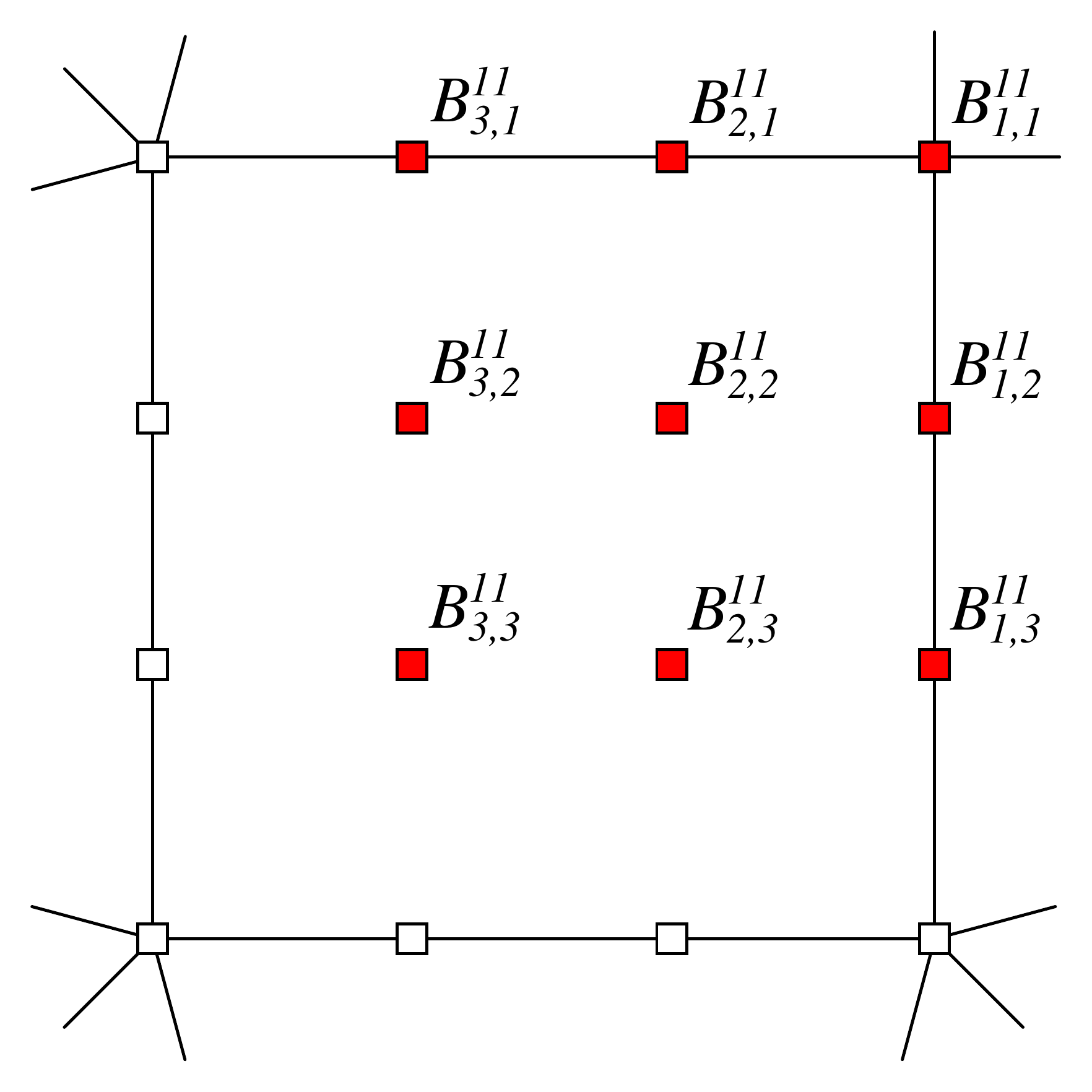}}
 \subfigure[After $2 \times 2$ split]{\includegraphics[scale=0.41]{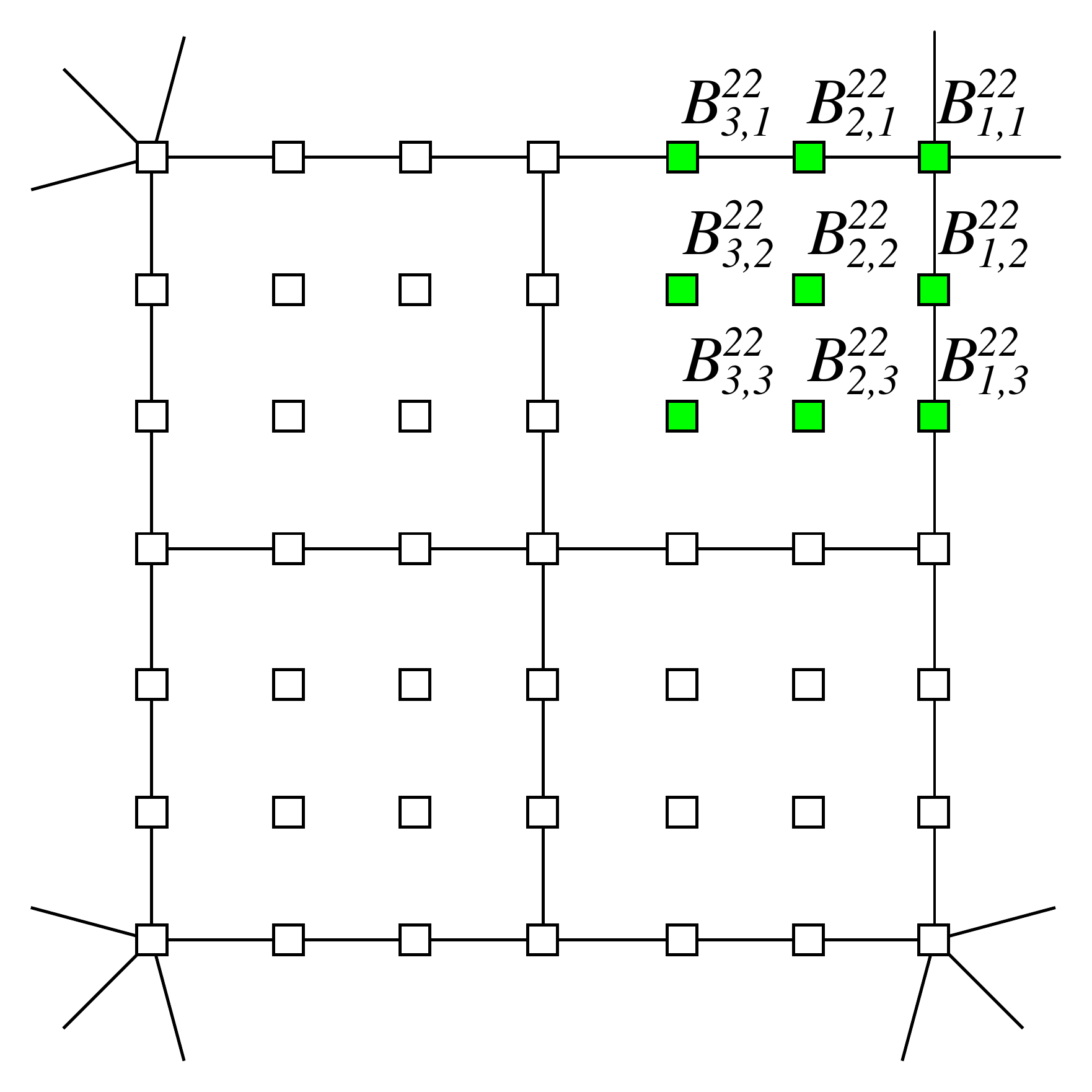}}  \\
\caption{(Color online) An irregular face with at least one vertex that is not an EP. The B\'ezier coefficients colored in green are computed from the B\'ezier coefficients colored in red using Eqs. \eqref{3epa}-\eqref{3epb}.}
\label{bezier3}
\end{figure}

Taking into account the above three items, we will prove a stronger statement: \textit{If the B\'ezier coefficients of the function $\sum c_B M^1_B$ that are not influenced by the D-patch smoothing process are zero, then the function $\sum c_B M^0_B$ has zero face B\'ezier coefficients and zero B\'ezier coefficients placed at the boundary of the elemental T-mesh.}

In transition faces, the B\'ezier coefficients of the functions $\{ M^1_B \}^{n_b}_{B=1}$ and $\{ M^0_B \}^{n_b}_{B=1}$ are the same. Thus, the B\'ezier coefficients of the function $\sum c_B M^0_B$ are zero in transition faces.

\begin{figure} [t!]
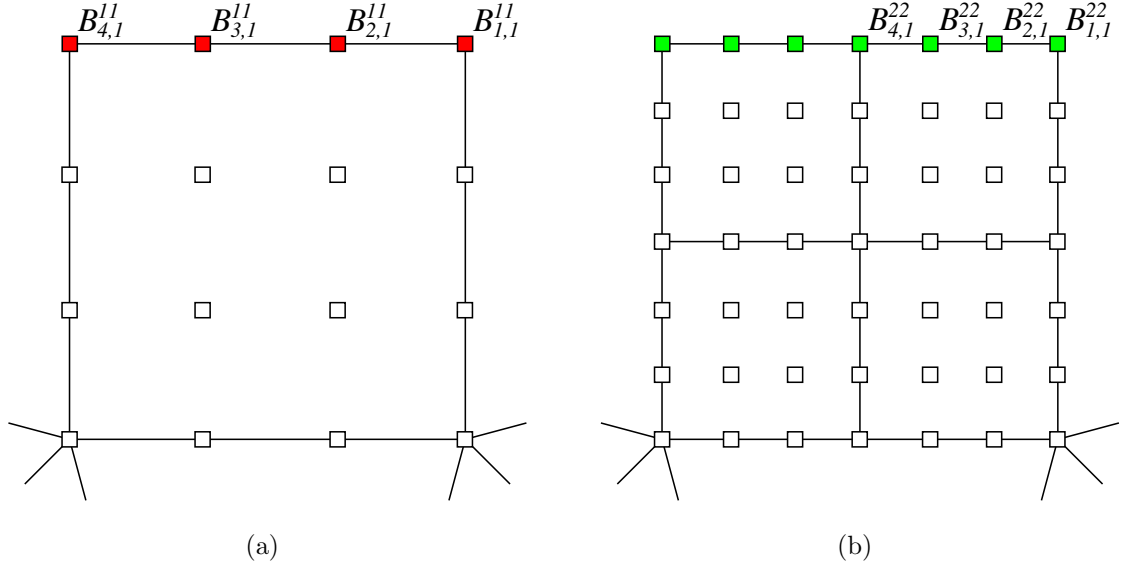
 
 \centering
 \subfigure[]{\includegraphics[scale=0.41]{/New_Figures_Kuanren/Figure8a.pdf}} \hspace*{+3mm}
 \subfigure[]{\includegraphics[scale=0.41]{/New_Figures_Kuanren/Figure8b.pdf}} \\
\caption{(Color online) An irregular face at the boundary of the elemental T-mesh. The B\'ezier coefficients colored in green are computed from the B\'ezier coefficients colored in red using Eq. \eqref{3epbboundary}.}
\label{bezier333}
\end{figure}

Let us now consider an irregular face with at least one vertex that is not an EP; see Fig. \ref{bezier3}. In Fig. \ref{bezier3} b), the B\'ezier coefficients colored in green are not influenced by the D-patch smoothing process. As a consequence, these coefficients are computed from the B\'ezier coefficients colored in red in Fig. \ref{bezier3} a) using the de Casteljau algorithm (see Appendix A), namely,
\begin{equation} \label{3epa}
\left(\begin{array}{ccc}
  B^{21}_{1, j} & B^{21}_{2, j} & B^{21}_{3, j}
\end{array}\right) = \left(\begin{array}{ccc}
  B^{11}_{1, j} & B^{11}_{2, j} & B^{11}_{3, j}
\end{array}\right) \left(
              \begin{array}{ccc}
                1 & 0 & 0 \\
                \frac{1}{2} & \frac{1}{2} & 0 \\
                \frac{1}{4} & \frac{1}{2} & \frac{1}{4} \\
              \end{array}
            \right), 1 \leq j \leq 3,
\end{equation}
where
\begin{equation} \label{3epb}
\left(\begin{array}{ccc}
  B^{22}_{i, 1} & B^{22}_{i, 2} & B^{22}_{i, 3}
\end{array}\right) = \left(\begin{array}{ccc}
  B^{21}_{i, 1} & B^{21}_{i, 2} & B^{21}_{i, 3}
\end{array}\right) \left(
              \begin{array}{ccc}
                1 & 0 & 0 \\
                \frac{1}{2} & \frac{1}{2} & 0 \\
                \frac{1}{4} & \frac{1}{2} & \frac{1}{4} \\
              \end{array}
            \right), 1 \leq i \leq 3.
\end{equation}
Since $B^{22}_{i, j} = 0 \; \forall i,j \in \{1,2,3\}$ and the matrix in Eqs. \eqref{3epa}-\eqref{3epb} is full rank, we have $B^{11}_{i, j} = 0 \; \forall i,j \in \{1,2,3\}$. Thus, the face coefficients of the function $\sum c_B M^0_B$ are zero in irregular faces with at least one vertex that is not an EP.

\begin{figure} [htbp]
 \centering
 \subfigure[Before split]{\includegraphics[scale=0.35]{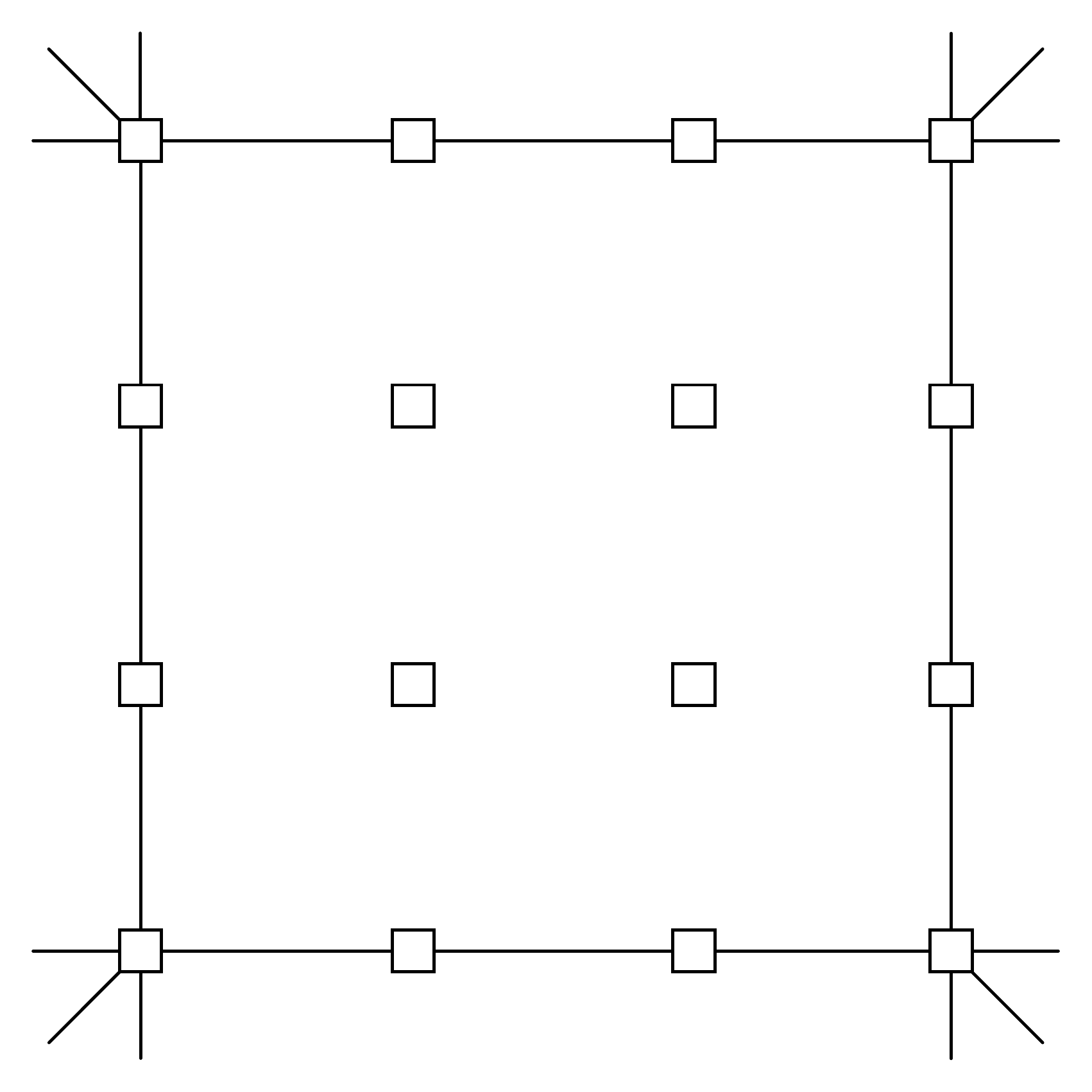}} \hspace*{+6mm}
 \subfigure[After $2 \times 2$ split]{\includegraphics[scale=0.35]{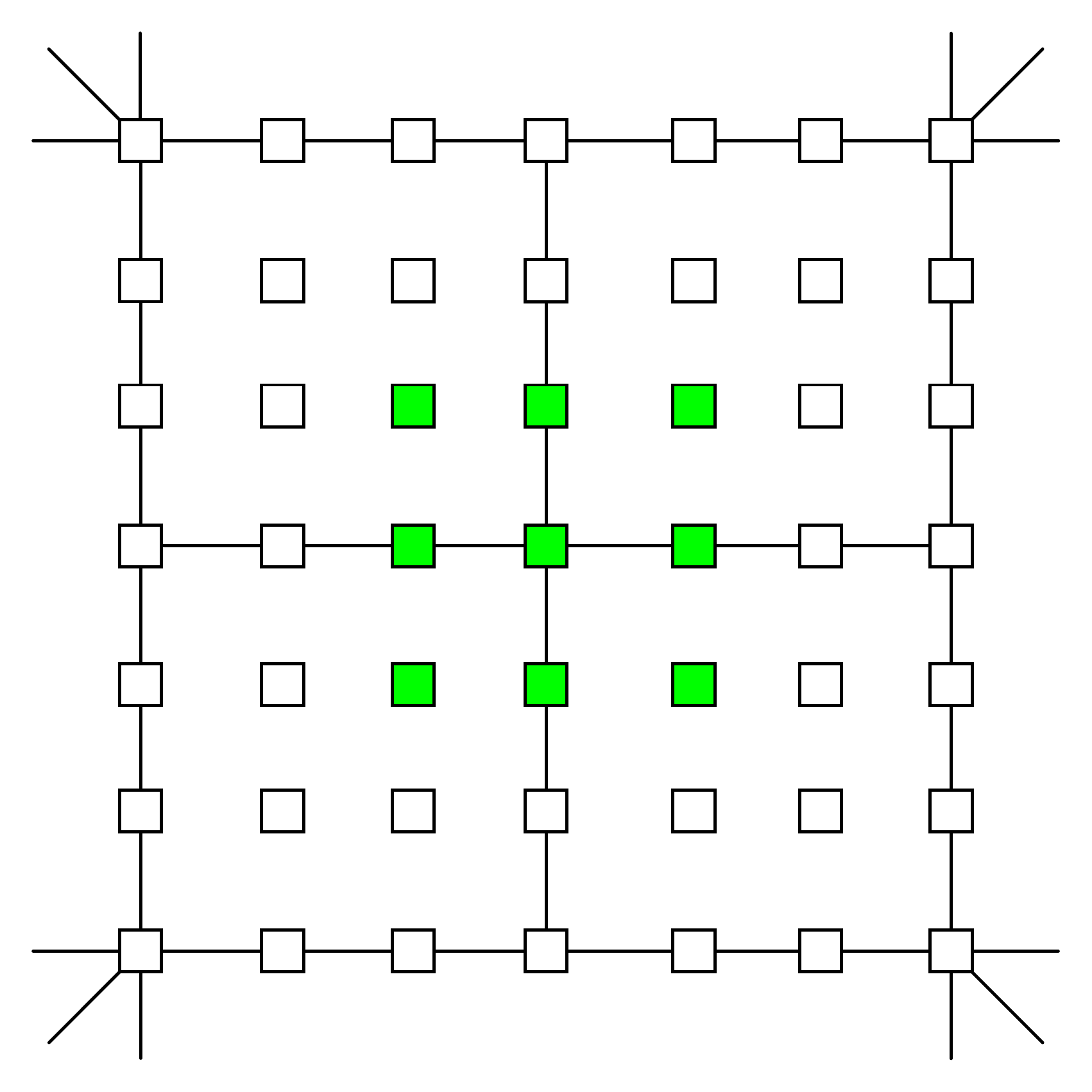}}  \\
\caption{(Color online) An irregular face with four EPs. The B\'ezier coefficients colored in green are not influenced by the D-patch smoothing process.}
\label{bezier40}
\end{figure}

When an irregular face is located at the boundary of the elemental T-mesh, we need to show that the function $\sum c_B M^0_B$ has zero B\'ezier coefficients at the boundary of the elemental T-mesh; see Fig. \ref{bezier333}. Taking into account that no EP is placed at the boundary of the elemental T-mesh since no EP belongs to the 0- and 1-layer vertices around the T-mesh boundary, the B\'ezier coefficients colored in green in Fig. \ref{bezier333} b) are not influenced by the D-patch smoothing process. Therefore, these coefficients are computed from the B\'ezier coefficients colored in red in Fig. \ref{bezier333} a) using the de Casteljau algorithm (see Appendix A), namely,
\begin{equation} \label{3epbboundary}
\left(\begin{array}{cccc}
  B^{22}_{1, 0} & B^{22}_{2, 0} & B^{22}_{3, 0} & B^{22}_{4, 0}
\end{array}\right) = \left(\begin{array}{cccc}
  B^{11}_{1, 0} & B^{11}_{2, 0} & B^{11}_{3, 0} & B^{11}_{4, 0}
\end{array}\right) \left(
              \begin{array}{cccc}
                1 & 0 & 0 & 0\\
                \frac{1}{2} & \frac{1}{2} & 0 & 0\\
                \frac{1}{4} & \frac{1}{2} & \frac{1}{4} & 0\\
                \frac{1}{8} & \frac{3}{8} & \frac{3}{8} & \frac{1}{8}\\
              \end{array}
            \right).
\end{equation}
Since $B^{22}_{i, 0} = 0 \; \forall i \in \{1,2,3,4\}$ and the matrix in Eq. \eqref{3epbboundary} is full rank, we have $B^{11}_{i, 0} = 0 \; \forall i \in \{1,2,3,4\}$. Thus, the function $\sum c_B M^0_B$ has zero B\'ezier coefficients placed at the boundary of the elemental T-mesh in irregular faces.

Let us now consider an irregular face with four EPs, henceforth known as totally irregular face (see Fig. \ref{bezier40}). Firstly, note that a totally irregular face cannot be located at the boundary of the elemental T-mesh since no EP belongs to the 0- and 1-layer vertices around the T-mesh boundary. The key observation for the remaining of this proof is that the B\'ezier coefficients plotted in green in Fig. \ref{bezier40} b) are not influenced by the D-patch smoothing process; i.e., these coefficients are computed from the B\'ezier coefficients in Fig. \ref{bezier40} a) using the de Casteljau algorithm. We will first prove that if the B\'ezier coefficients plotted in brown in Fig. \ref{bezier2} c) are zero, then the B\'ezier coefficients plotted in brown in Fig. \ref{bezier2} b) are zero. After that, we will prove that if the B\'ezier coefficients plotted in brown in Fig. \ref{bezier2} b) are zero, then the B\'ezier coefficients plotted in brown in Fig. \ref{bezier2} a) are zero.

\begin{figure} [htbp]
 \centering
 \subfigure[Before split]{\includegraphics[scale=0.33]{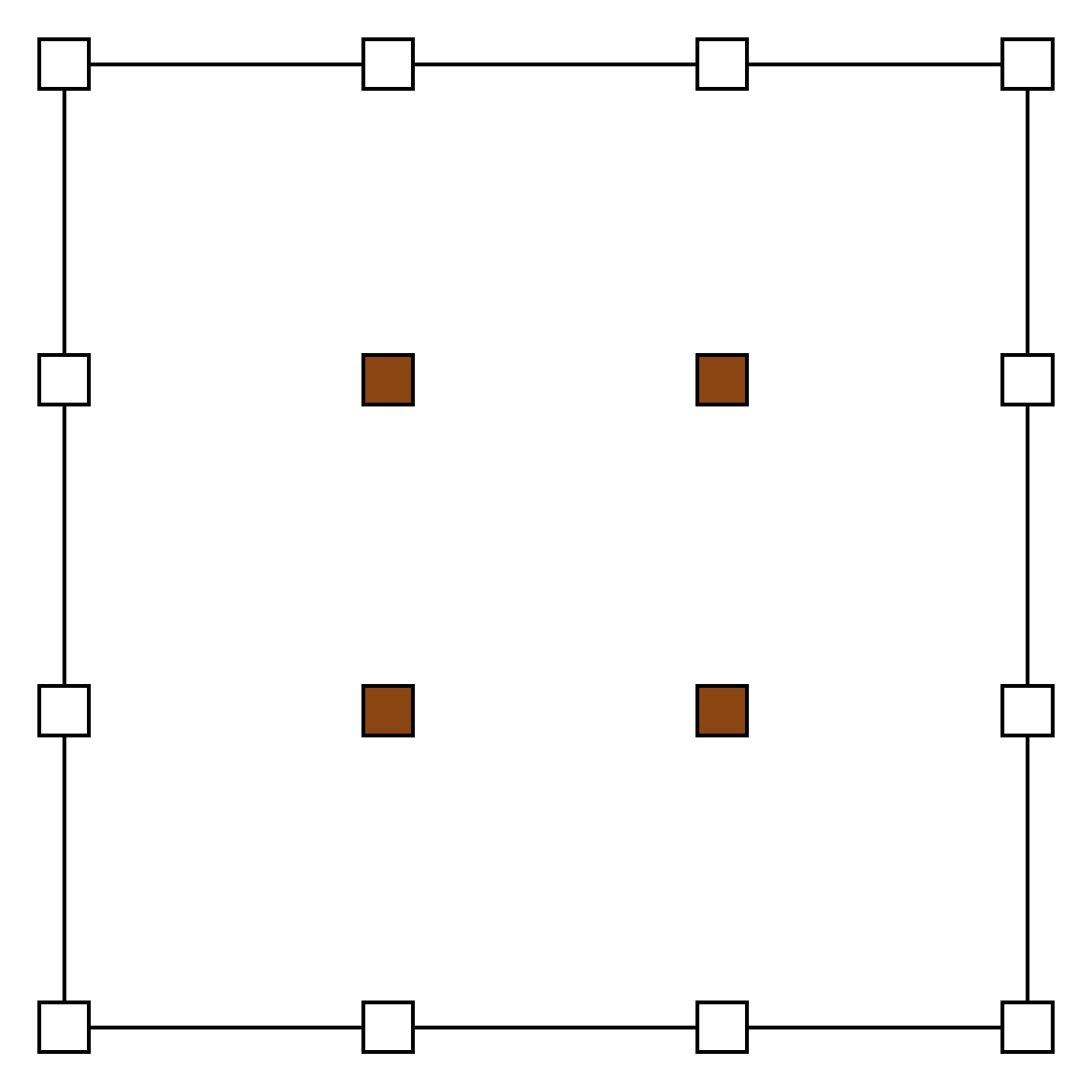}} \hspace*{+6mm}
 \subfigure[After $2 \times 1$ split]{\includegraphics[scale=0.33]{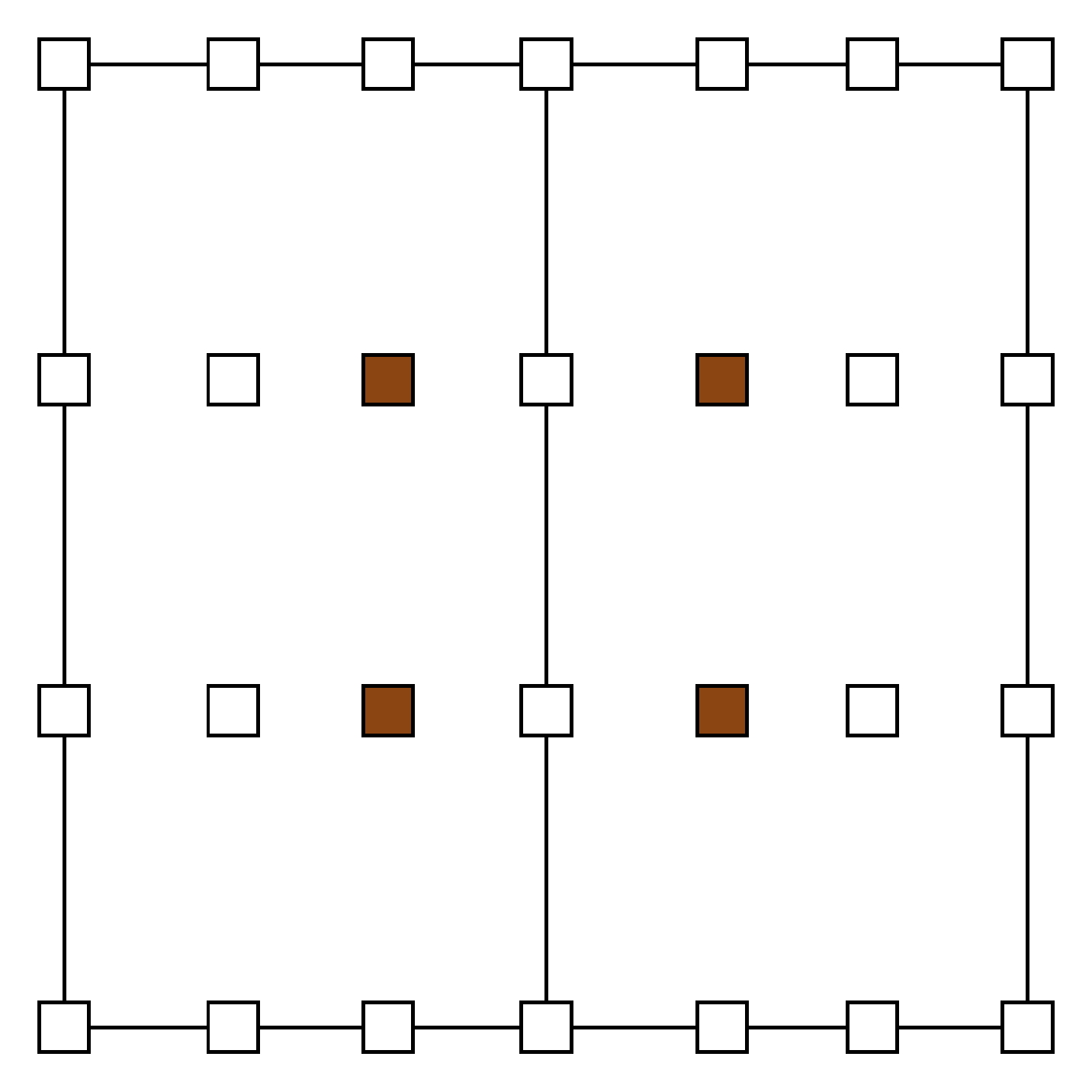}} \hspace*{+6mm}
 \subfigure[After $2 \times 2$ split]{\includegraphics[scale=0.33]{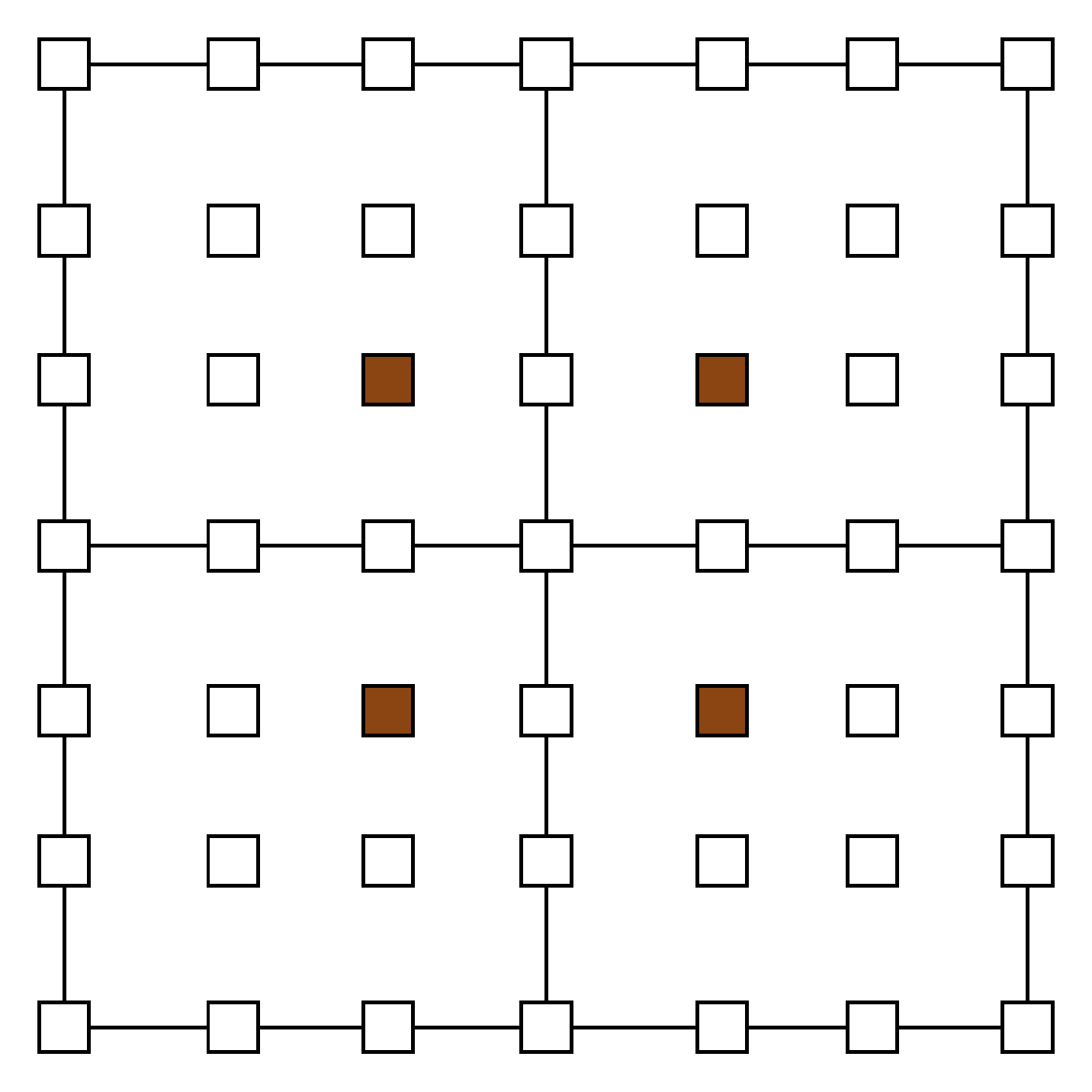}} \\
\caption{(Color online) Linear transformations between the B\'ezier coefficients colored in brown will be established, which requires to take into consideration neighboring faces as well.}
\label{bezier2}
\end{figure}

To establish a relation between the B\'ezier coefficients plotted in brown in Fig. \ref{bezier2} c) and b), two cases need to be distinguished as shown in Figs. \ref{bezier4} and \ref{bezier5}. Fig. \ref{bezier4} represents a strip of totally irregular faces connected with two faces that are not totally irregular faces while Fig. \ref{bezier5} represents a closed strip of totally irregular faces. Since we do not allow EPs in the 0- and 1-layer around the T-mesh boundary, a closed  strip of totally irregular faces can only exist in closed surfaces, e.g., a cube meshed with one T-mesh face for each cube face (all the vertices are EPs with valence 3).

\begin{figure} [htbp]
 \centering
 \subfigure[After $2 \times 1$ split]{\includegraphics[scale=0.35]{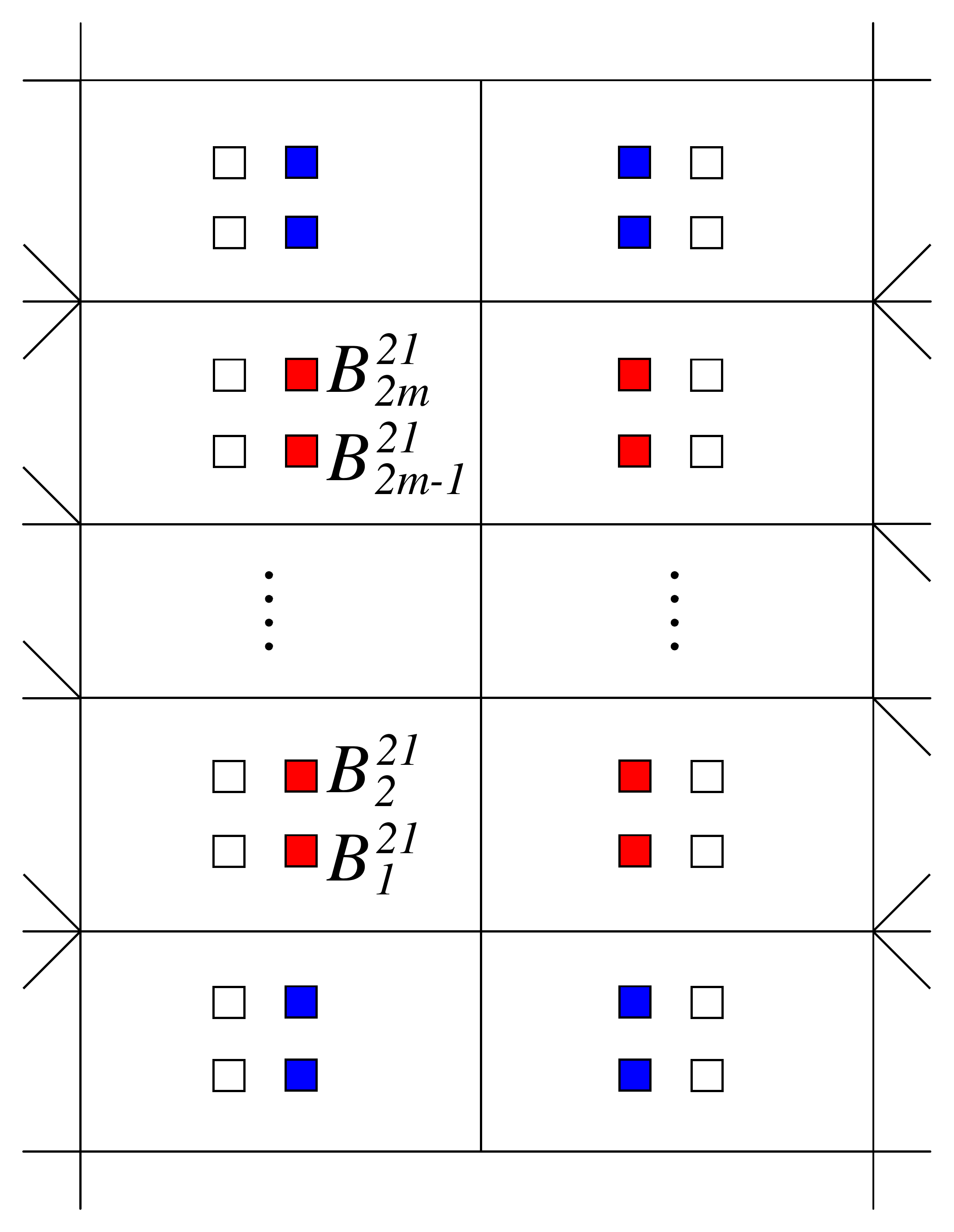}}
 \subfigure[After $2 \times 2$ split]{\includegraphics[scale=0.35]{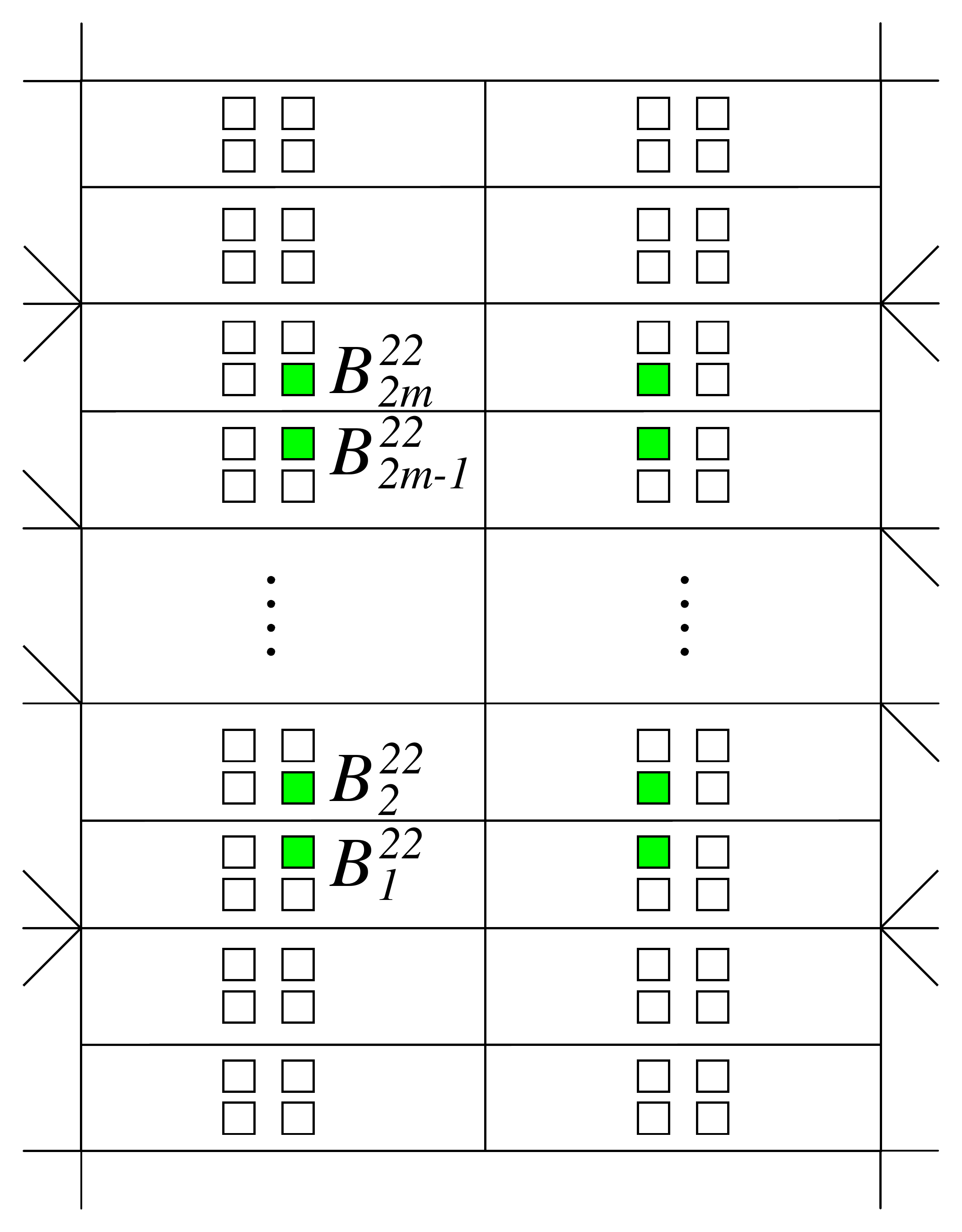}}  \\
\caption{(Color online) A strip of $m$ totally irregular faces connected with two faces that are not totally irregular faces.}
\label{bezier4}
\end{figure}

\begin{figure} [htbp]
 \centering
 \subfigure[After $2 \times 1$ split]{\includegraphics[scale=0.38]{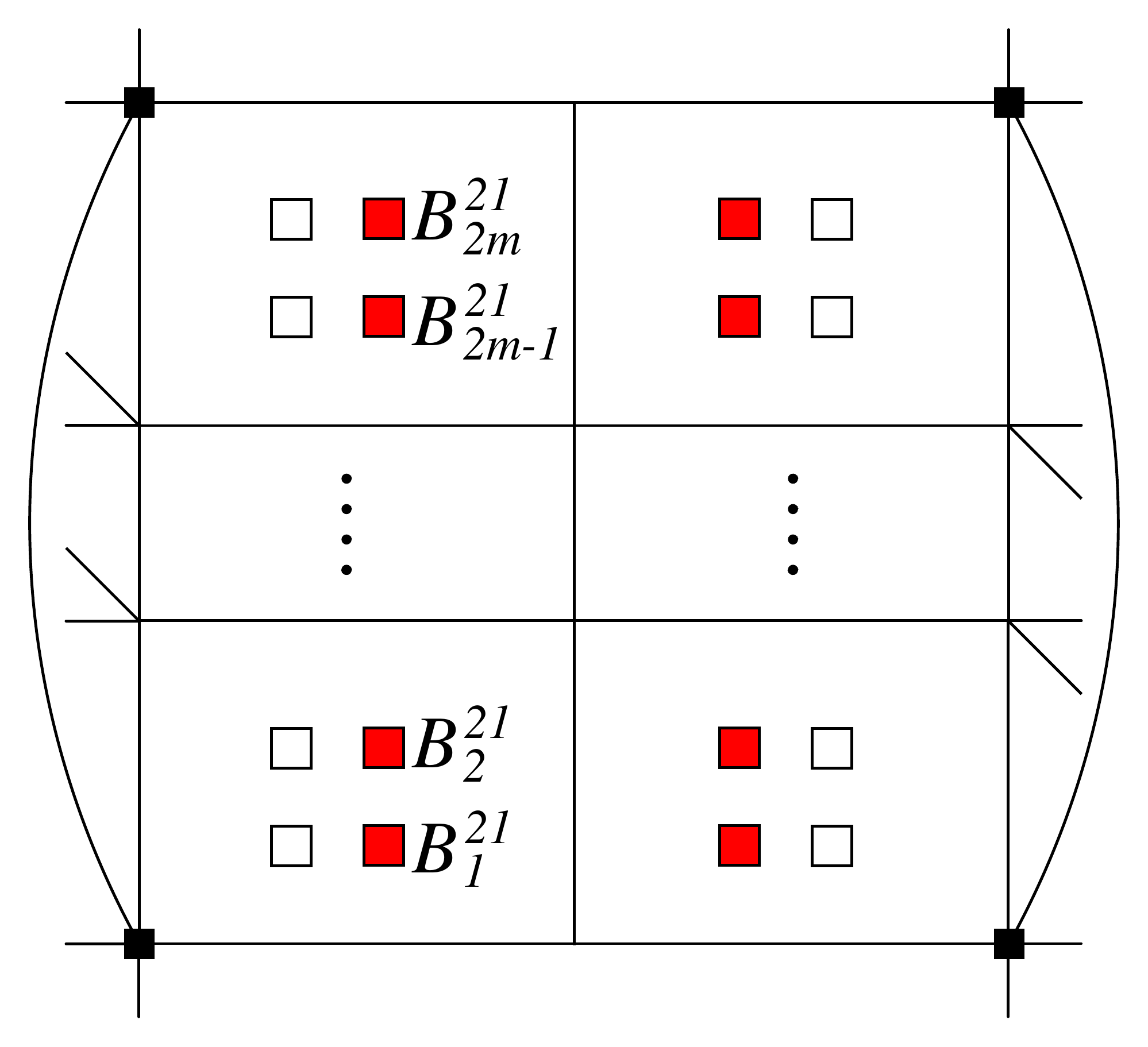}}
 \subfigure[After $2 \times 2$ split]{\includegraphics[scale=0.38]{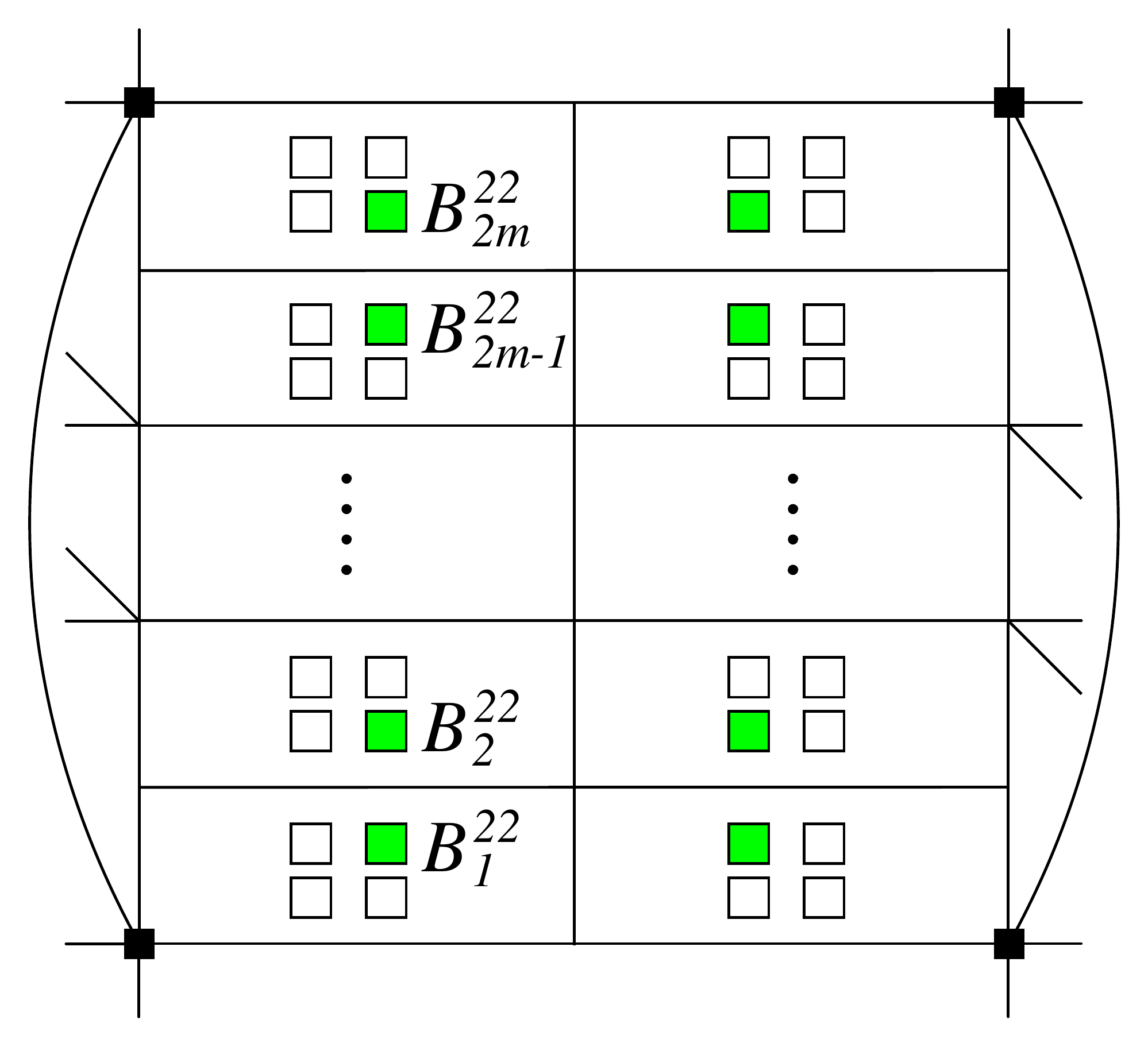}}  \\
\caption{(Color online) A closed strip of $m$ totally irregular faces.}
\label{bezier5}
\end{figure}

Using the labels indicated in  Fig. \ref{bezier4}, we define the vectors $\vec{B}^{22} = (B^{22}_1, B^{22}_2, ..., B^{22}_{2m})^	T$ and $\vec{B}^{21} = (B^{21}_1, B^{21}_2, ..., B^{21}_{2m})^T$ together with the matrix $\vec{M}_1$ such that $\vec{B}^{22} = \vec{M}_1 \vec{B}^{21}$. Using the de Casteljau algorithm (see Appendix A), Eqs. \eqref{5theq} - \eqref{6theq}, and taking into account that the B\'ezier coefficients plotted in blue in Fig. \ref{bezier4} a) are zero due to Eqs. \eqref{3epa}-\eqref{3epb}, the matrix $\vec{M}_1$ turns out to take the form
\begin{equation}
\vec{M}_{1} = \left(
      \begin{array}{cccccc}
        \frac{5}{8} & \frac{1}{4} & 0 & \dots & 0 & 0 \\
        \frac{1}{8} & \frac{5}{8} & \frac{1}{4} & 0 & \dots & 0 \\
        0 & \frac{1}{8} & \frac{5}{8} & \frac{1}{4} & \dots & 0 \\
        \dots & \dots & \dots & \dots & \dots & \dots \\
        0 & 0 & \dots & 0 & \frac{1}{8} & \frac{5}{8} \\
      \end{array}
    \right) \text{.}
\end{equation}
The matrix $\vec{M}_1$ is full rank since it is a diagonally dominant matrix. 

Using the labels indicated in  Fig. \ref{bezier5}, we define the vectors $\vec{B}^{22} = (B^{22}_1, B^{22}_2, ..., B^{22}_{2m})^T$ and $\vec{B}^{21} = (B^{21}_1, B^{21}_2, ..., B^{21}_{2m})^T$ together with the matrix $\vec{M}_2$ such that $\vec{B}^{22} = \vec{M}_1 \vec{B}^{21}$. Using the de Casteljau algorithm (see Appendix A) and Eqs. \eqref{5theq} - \eqref{6theq}, the matrix $\vec{M}_2$ turns out to take the form
\begin{equation}
\vec{M}_{2} = \left(
      \begin{array}{cccccc}
        \frac{5}{8} & \frac{1}{4} & 0 & \dots & 0 & \frac{1}{8} \\
        \frac{1}{8} & \frac{5}{8} & \frac{1}{4} & 0 & \dots & 0 \\
        0 & \frac{1}{8} & \frac{5}{8} & \frac{1}{4} & \dots & 0 \\
        \dots & \dots & \dots & \dots & \dots & \dots \\
        \frac{1}{4} & 0 & \dots & 0 & \frac{1}{8} & \frac{5}{8} \\
      \end{array}
    \right) \text{.}
\end{equation}
The matrix $\vec{M}_2$ is full rank since it is a diagonally dominant matrix. 

Since matrices $\vec{M}_1$ and $\vec{M}_2$ are full rank, the B\'ezier coefficients plotted in brown in Fig. \ref{bezier2} c) being zero imply that the B\'ezier coefficients plotted in brown in Fig. \ref{bezier2} b) are also zero.

\begin{figure} [t!]
 \centering
 \subfigure[Before split]{\includegraphics[scale=0.35]{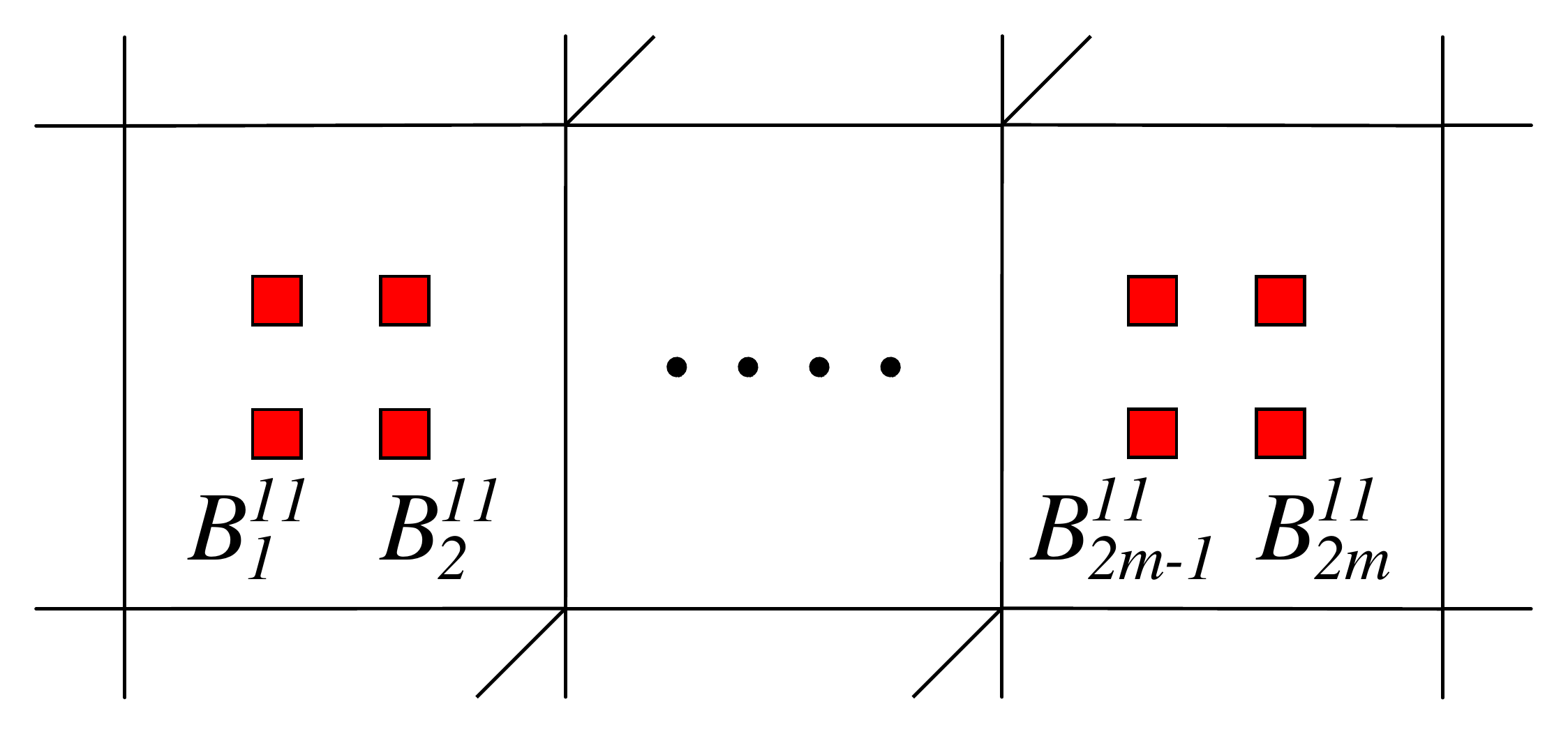}}
 \subfigure[After $2 \times 1$ split]{\includegraphics[scale=0.35]{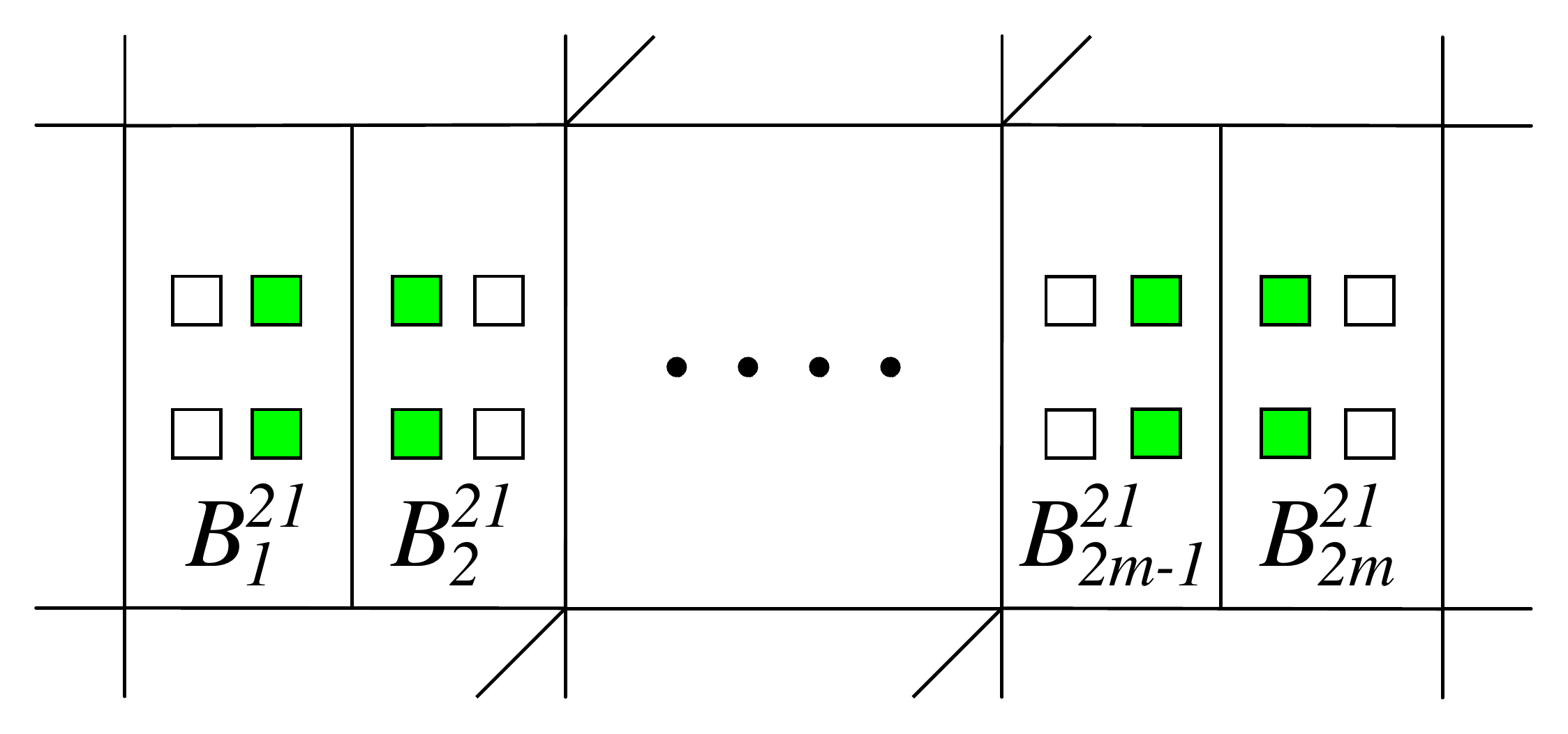}}  \\
\caption{(Color online) A strip of $m$ totally irregular faces connected with two faces that are not totally irregular faces.}
\label{bezier6}
\end{figure}

\begin{figure} [t!]
 \centering
 \subfigure[Before split]{\includegraphics[scale=0.35]{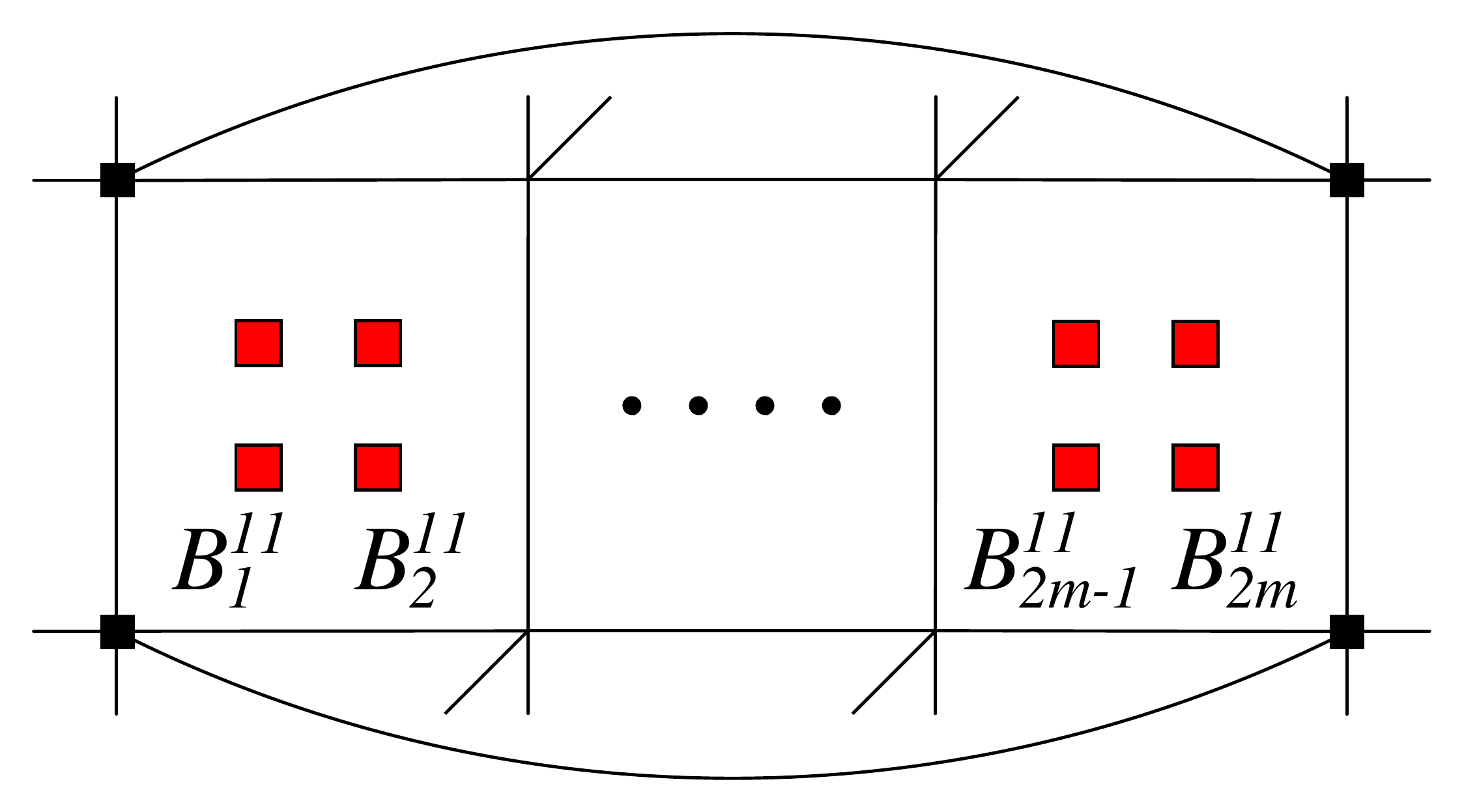}}
 \subfigure[After $2 \times 1$ split]{\includegraphics[scale=0.35]{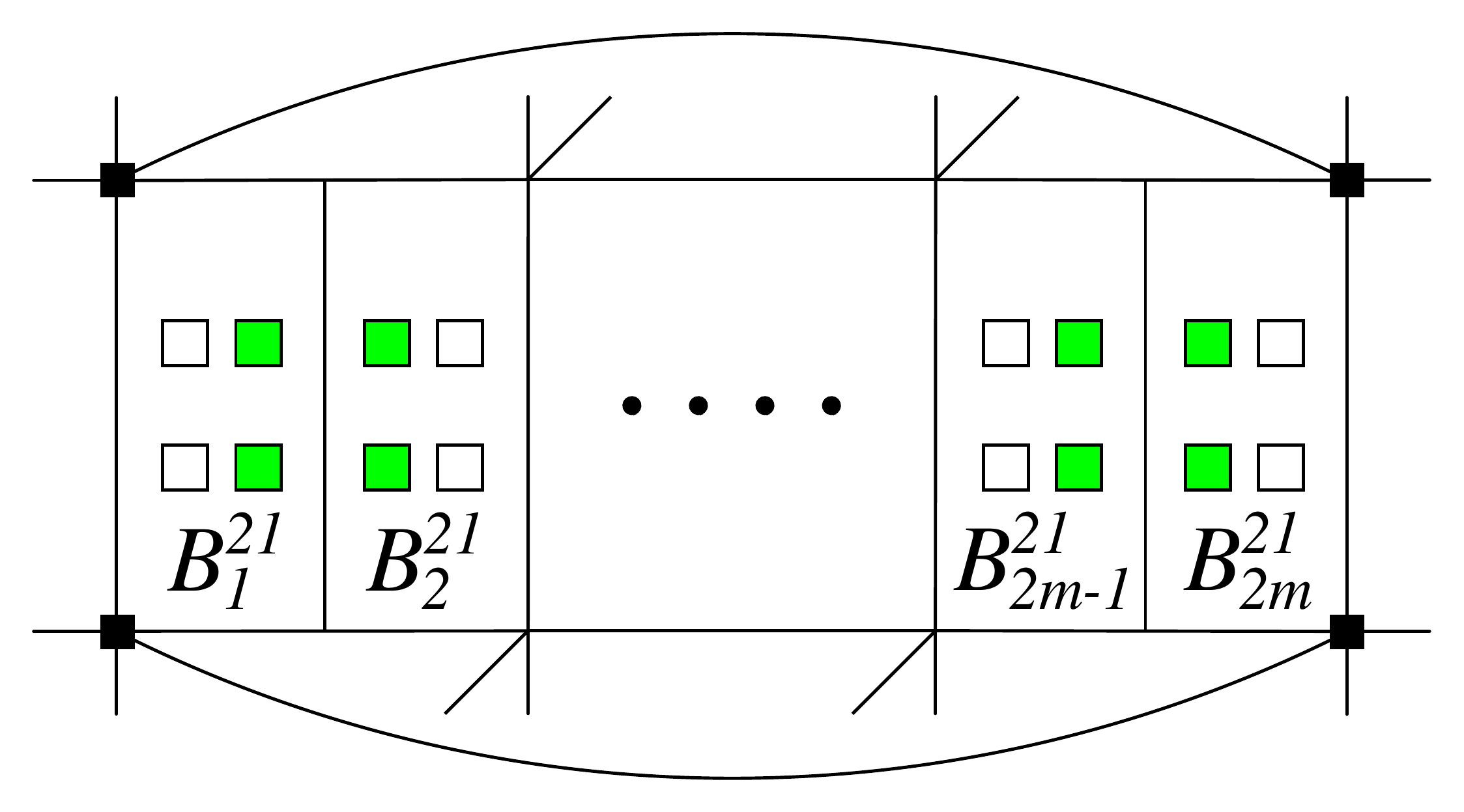}}  \\
\caption{(Color online) A closed strip of $m$ totally irregular faces.}
\label{bezier7}
\end{figure}

To establish a relation between the B\'ezier coefficients plotted in brown in Fig. \ref{bezier2} b) and a), the same two cases need to be considered again as shown in Figs. \ref{bezier6} and \ref{bezier7}. Using the same reasoning as above, it is obtained that the B\'ezier coefficients $\vec{B}^{21} = (B^{21}_1, B^{21}_2, ..., B^{21}_{2m})^T$  and $\vec{B}^{11} = (B^{11}_1, B^{11}_2, ..., B^{11}_{2m})^T$  of Figs. \ref{bezier6} and \ref{bezier7} are once again related through matrices $\vec{M}_1$ and $\vec{M}_2$, respectively. Thus, the B\'ezier coefficients plotted in brown in Fig. \ref{bezier2} b) being zero implies that the B\'ezier coefficients plotted in brown in Fig. \ref{bezier2} a) are also zero. As a result, the face B\'ezier coefficients of the function $\sum c_B M^0_B$ are zero in totally irregular faces.

We have already proven that the function $\sum c_B M^0_B$ has zero face B\'ezier coefficients and zero B\'ezier coefficients placed at the boundary of the elemental T-mesh, which implies $ c_B = 0 \; \forall B \in \{1,2,...,n_b\}$ since $\{ M^0_B \}^{n_b}_{B=1}$ are assumed to be linearly independent. Therefore, $\{ M^1_B \}^{n_b}_{B=1}$ (and $\{ N^1_L \}^{n}_{L=1}$) are linearly independent if $\{ M^0_B \}^{n_b}_{B=1}$ (and $\{ N^0_L \}^{n}_{L=1}$) are linearly independent.

%We have already proven that $\sum c_B M^1_B = 0 \implies \sum c_B M^0_B = 0$, and therefore $ c_B = 0 \; \forall B \in \{1,2,...,n_b\}$ since $\{ M^0_B \}^{n_b}_{B=1}$ are assumed to be linearly independent. Thus, the proof of Lemma 1 is complete.

\end{proof}

\begin{lemma}\label{lemma:2} The blending functions of $\mathbb{S}^0_D$ are linearly independent.
\end{lemma}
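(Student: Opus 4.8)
The plan is to reuse the B\'ezier-extraction strategy of the proof of Lemma~\ref{lemma:1}. Suppose $\sum_{L=1}^{n} c_L N^0_L = 0$. As noted there, B\'ezier extraction may be applied to the scalar coefficients $c_L$, and since the Bernstein polynomials are a basis on every element of the elemental T-mesh, this forces all B\'ezier coefficients of $\sum_L c_L N^0_L$ to vanish. By definition of $\mathbb S^0_D$, those B\'ezier coefficients are precisely the quantities obtained by substituting $\mathbf P_L = c_L$ into Eqs.~\eqref{firsteq}--\eqref{lasteq2} (and then, on irregular faces, applying the invertible $2\times2$ de Casteljau subdivision of Section~2, which does not affect the argument). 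The lemma therefore reduces to showing that Eqs.~\eqref{firsteq}--\eqref{lasteq2} can be inverted vertex by vertex: if every such B\'ezier coefficient is zero, then $c_L = 0$ for all $L$.

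First I would deal with the boundary of the elemental T-mesh. By \eqref{lasteq2}, each corner B\'ezier coefficient equals the spline coefficient at the corresponding corner vertex, so those $c_L$ vanish. By \eqref{9theq}--\eqref{10theq}, the two B\'ezier coefficients on each element edge lying on the boundary are the image of the two adjacent boundary spline coefficients under $\tfrac{1}{a+b+c}\left(\begin{smallmatrix} b+c & a\\ c & a+b\end{smallmatrix}\right)$, whose determinant is $b/(a+b+c)\neq 0$ since the knot span $b$ of a boundary edge of a non-degenerate element is positive; hence the coefficients of all boundary vertices vanish. (The remaining identities \eqref{5theq}--\eqref{6theq}, \eqref{7theq}--\eqref{lasteq}, \eqref{11theq} then contain no new information, being convex combinations of quantities already known to vanish; and since no extraordinary point sits on the elemental-T-mesh boundary, none of this involves an extraordinary point.)

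Next I would recover the interior spline coefficients from the interior face B\'ezier coefficients. For \emph{every} face -- regular, transition, irregular, or totally irregular -- the four interior B\'ezier coefficients $\mathbf B_6^i,\mathbf B_7^i,\mathbf B_{10}^i,\mathbf B_{11}^i$ are, by \eqref{firsteq}--\eqref{4theq}, the image of the four spline coefficients $c_A,c_B,c_C,c_D$ attached to that face under $\tfrac{1}{a+b+c}\left(\begin{smallmatrix} b+c & a\\ c & a+b\end{smallmatrix}\right)\otimes\tfrac{1}{d+e+f}\left(\begin{smallmatrix} e+f & d\\ f & d+e\end{smallmatrix}\right)$, a Kronecker product of two $2\times2$ matrices of determinants $b/(a+b+c)$ and $e/(d+e+f)$; this is invertible because the element's own knot spans $b$ and $e$ are positive (zero-measure faces having been discarded and spoke edges carrying nonzero knot span by assumption). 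Hence the vanishing of a face's interior B\'ezier coefficients forces $c_A=c_B=c_C=c_D=0$. Since every vertex of the T-mesh is one of the anchors $A,B,C,D$ of at least one element of the elemental T-mesh -- in particular every extraordinary point is a corner anchor of each of its incident irregular faces -- we conclude $c_L=0$ for every $L$. For regular faces influenced by T-junctions the anchor structure and the invertibility of the corresponding local maps are exactly those of \cite{Li2012,zhang2015linear,DaVeiga2013}, so nothing new is needed there.

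I expect the main obstacle to be combinatorial rather than analytical. Since no smoothing matrix $\mathbf{\Pi}^{+}$ has acted yet in $\mathbb S^0_D$, every B\'ezier coefficient is literally one of the expressions \eqref{firsteq}--\eqref{lasteq2}, and the inversion is entirely local and uniform in the number and placement of extraordinary points; in contrast to Lemma~\ref{lemma:1}, totally irregular faces require no separate treatment here. The care is in the bookkeeping: verifying that the four anchors entering \eqref{firsteq}--\eqref{4theq} on each face (regular, irregular, and totally irregular alike) are precisely the vertices appearing there with the stated positive weights, and that every vertex -- including the extraordinary points now admitted one layer closer to the boundary, and the near-boundary T-junctions -- is an anchor of some element of the elemental T-mesh whose two knot spans are both positive. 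Granting this, all $c_L$ vanish and the blending functions of $\mathbb S^0_D$ are linearly independent.
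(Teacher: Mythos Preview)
Your argument is correct and runs along the same line as the paper's: both reduce the question to the invertibility of the $4\times4$ map from the four face B\'ezier coefficients $B_6^i,B_7^i,B_{10}^i,B_{11}^i$ to the four corner spline coefficients, after invoking \cite{Li2012,zhang2015linear,DaVeiga2013} for T-junction-influenced regular faces. The difference is in how that invertibility is established. The paper restricts attention to irregular faces, places the extraordinary point at one corner, uses the constraint that all spoke edges of an EP carry the same knot span $a$, and computes the determinant of the resulting specialized $4\times4$ matrix directly as $a^4(2a+b)(2a+c)>0$. You instead observe that Eqs.~\eqref{firsteq}--\eqref{4theq} factor as a Kronecker product of two $2\times2$ matrices with determinants $b/(a+b+c)$ and $e/(d+e+f)$, so that invertibility holds on \emph{any} face of positive parametric measure, irregular or not, without ever invoking the equal-spoke-span hypothesis. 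Your route is slightly more uniform and makes the tensor-product structure transparent; the paper's route is shorter because it outsources all regular and transition coefficients to the cited references before touching the matrix. Both reach the same conclusion by the same mechanism.
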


\begin{proof}
We wish to prove that if $\sum c_L N^0_L = 0$, then $ c_L = 0 \; \forall L \in \{1,2,...,n\}$. In \cite{zhang2015linear}, it was proven that regular faces influenced by T-junctions are locally linearly independent. Therefore, the spline coefficients $c_L$ associated with functions $N_L$ with support on regular faces are zero. In other words, the spline coefficients $c_L$ associated with regular and transition functions are zero.

In an irregular face, at least one of the four vertices is an EP (see Fig. \ref{aaaa}). Since the spoke edges of an EP have the same knot span associated, Eqs. \eqref{firsteq}-\eqref{4theq} lead to the following relations between the face B\'ezier coefficients $B^i_6$, $B^i_7$, $B^i_{10}$, and $B^i_{11}$ and the spline coefficients with indices $j_1$, $j_2$, $j_3$, and $j_4$
\begin{equation} \label{matrix44}
\left(
  \begin{array}{c}
    B^i_6 \\
    B^i_7 \\
    B^i_{10} \\
    B^i_{11} \\
  \end{array}
\right) = \frac{1}{(2a+b)(2a+c)}\left(
            \begin{array}{cccc}
              2a(a+b) & 2a^2 & c(a+b) & ac \\
              2ab & 4a^2 & bc & 2ac \\
              a(a+b) & a^2 & (a+c)(a+b) & a(a+c) \\
              ab & 2a^2 & b(a+c) & 2a(a+c) \\
            \end{array}
          \right)
          \left(
            \begin{array}{c}
              c_{j_1} \\
              c_{j_2} \\
              c_{j_3} \\
              c_{j_4} \\
            \end{array}
          \right).
\end{equation}
The determinant of the matrix in Eq. \ref{matrix44} is $a^{4}(2a+b)(2a+c)$, which is greater than zero since $a > 0$ and $b, c \geq 0$. $\sum c_L N^0_L = 0$ implies that the face B\'ezier coefficients $B^i_6$, $B^i_7$, $B^i_{10}$, and $B^i_{11}$ are zero since the Bernstain polynomials form a basis. Thus, $ c_{j_k}= 0 \; \forall k \in \{1,2,3,4\}$ since the matrix in Eq. \ref{matrix44} is full rank, which means the spline coefficients $c_L$ associated with irregular functions are also zero. Therefore, $\{ N^0_L \}^{n}_{L=1}$ are linearly independent.

\end{proof}

\begin{lemma}\label{lemma:3} The blending functions of $\mathbb{S}^0_A$ are linearly independent.
\end{lemma}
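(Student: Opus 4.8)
The plan is to follow the template of the proof of Lemma~\ref{lemma:2}, now keeping track of the two features that $\mathbb{S}^0_A$ has and $\mathbb{S}^0_D$ does not: the face-based blending functions $\tilde M^0_F$, and the fact that the transition functions appearing in the basis are truncated. Suppose $\sum_B c_B M^0_B = 0$ and write the analysis basis as the union of the regular vertex-based functions $N^0_V$, the kept (truncated) transition functions $\hat M^{0,t}_V$, and the face-based functions $\tilde M^0_F$; the goal is to force every $c_B$ to vanish. As in Lemma~\ref{lemma:2} I would pass to B\'ezier coefficients: the zero function has all B\'ezier coefficients equal to zero, and by Eqs.~\eqref{5theq}--\eqref{lasteq2} it suffices to show that the face B\'ezier coefficients of $\sum_B c_B M^0_B$, together with its B\'ezier coefficients on the boundary of the elemental T-mesh, all vanish; after that, on the regular faces the statement reduces to the local linear independence of standard AST-splines proven in \cite{Li2012, zhang2015linear, DaVeiga2013}.

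First I would dispose of the coefficients attached to functions that touch a regular face. On a regular face the analysis space coincides with an ordinary AST-spline space, so local linear independence there forces $c_B=0$ for every function with support reaching a regular face; in particular all the regular vertex-based coefficients vanish. The functions that can survive are then supported only on transition and irregular faces. Next, on the transition faces, one exploits that truncation changes a transition function only through its active children, which are face-based functions of $\mathbb{S}^1_A$: by isolating the B\'ezier coefficients that are \emph{not} touched by truncation and by writing the corresponding recovery relations (these are small, diagonally-dominant linear systems entirely analogous to the matrices $\vec M_1$ and $\vec M_2$ used in Lemma~\ref{lemma:1}), one peels the unknowns inward and forces the remaining transition coefficients, and any remaining face-based coefficients that reach a transition face, to vanish as well. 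After this, only the interiors of the irregular faces are left.

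The last step is the analogue of the $4\times4$ argument in Lemma~\ref{lemma:2}. On an irregular face $i$ every vertex-based function has a zero B\'ezier coefficient at the four interior positions $\mathbf B^i_6,\mathbf B^i_7,\mathbf B^i_{10},\mathbf B^i_{11}$, because the four corners of an irregular face are all EPs or transition vertices and the contributions of irregular and transition control points are discarded in Eqs.~\eqref{firsteq}--\eqref{4theq} when forming vertex-based extraction operators. Hence those four interior B\'ezier coefficients are produced exclusively by the four face-based functions attached to face $i$, through the invertible coefficient pattern of Fig.~\ref{analysisspacemod2}. Since $\sum_B c_B M^0_B=0$ forces these interior B\'ezier coefficients to be zero, the four face-based coefficients of face $i$ must be zero; totally irregular faces (four EPs), which by admissibility cannot touch the T-mesh boundary, are handled via strips of such faces and diagonally-dominant matrices exactly as in Lemma~\ref{lemma:1}, distinguishing the open and closed strip cases. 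Together with $\{M^0_B\}^{n_b}_{B=1}$ being the $n_b$ generators in question, this yields linear independence.

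I expect the main obstacle to be the middle step: making the truncation bookkeeping rigorous in the new regime where several EPs may share a face. One has to check that on every transition and irregular face the B\'ezier coefficients left untouched by truncation still determine all the still-unknown coefficients — i.e.\ that truncation never spoils the full-rank structure exploited in Lemma~\ref{lemma:2} — and that a closed strip of totally irregular faces, possible only on a closed surface as noted in Lemma~\ref{lemma:1}, does not introduce a spurious dependency. This is precisely where allowing multiple EPs per face, rather than insisting on three rings of separation as in \cite{toshniwal2017smooth, casquero2020seamless}, makes the argument more delicate than in the earlier works.
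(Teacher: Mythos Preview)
Your plan is considerably more elaborate than the paper's argument, and one ingredient is genuinely misplaced. The paper's proof of Lemma~\ref{lemma:3} is only a few lines and mirrors Lemma~\ref{lemma:2} almost verbatim. First, exactly as in Lemma~\ref{lemma:2}, local linear independence on regular faces is invoked to kill the coefficients of \emph{all} regular \emph{and} transition vertex-based functions in one stroke; there is no separate ``truncation bookkeeping'' step and no diagonally-dominant recovery systems on transition faces. Second, with every vertex-based coefficient already zero, on each irregular face the four interior B\'ezier coefficients $B^i_6,B^i_7,B^i_{10},B^i_{11}$ receive contributions only from that face's four face-based functions, and by Fig.~\ref{analysisspacemod2} the relation is the $4\times4$ identity (Eq.~\eqref{matrix55}); hence the face-based coefficients vanish immediately and the proof ends. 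You actually state this identity-matrix observation yourself, so the core idea is there---but you then surround it with machinery the paper does not need.

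The strip argument for totally irregular faces is the concrete misstep. That device belongs to Lemma~\ref{lemma:1}, where the $2\times2$ split has already been performed and the D-patch smoothing has overwritten the B\'ezier coefficients nearest each EP, forcing one to reconstruct the pre-split face coefficients from the surviving post-split ones via $\vec M_1,\vec M_2$. In $\mathbb{S}^0_A$ none of that has happened: each irregular face is still a single B\'ezier patch with sixteen coefficients, the four interior ones are intact, and Eq.~\eqref{matrix55} applies uniformly whether the face has one EP or four. There is no distinction between partially and totally irregular faces at this stage, and no strips are needed. Your middle step---peeling transition coefficients inward through truncation-aware systems---is likewise absent from the paper, which simply reuses the regular-face argument of Lemma~\ref{lemma:2}; if you believe a truncated transition function might evade every regular face, that would be a point worth scrutinizing, but it is a potential refinement of the paper's short proof rather than the route the paper takes.
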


\begin{proof}
We wish to prove that if $\sum c_B M^0_B = 0$, then $ c_B = 0 \; \forall B \in \{1,2,...,n_b\}$. As in Lemma \ref{lemma:2}, the spline coefficients $c_B$ associated with regular and transition functions are zero due to the fact that regular faces influenced by T-junctions are locally linearly independent.

\begin{figure} [t!] 
\centering
\includegraphics[width=7cm]{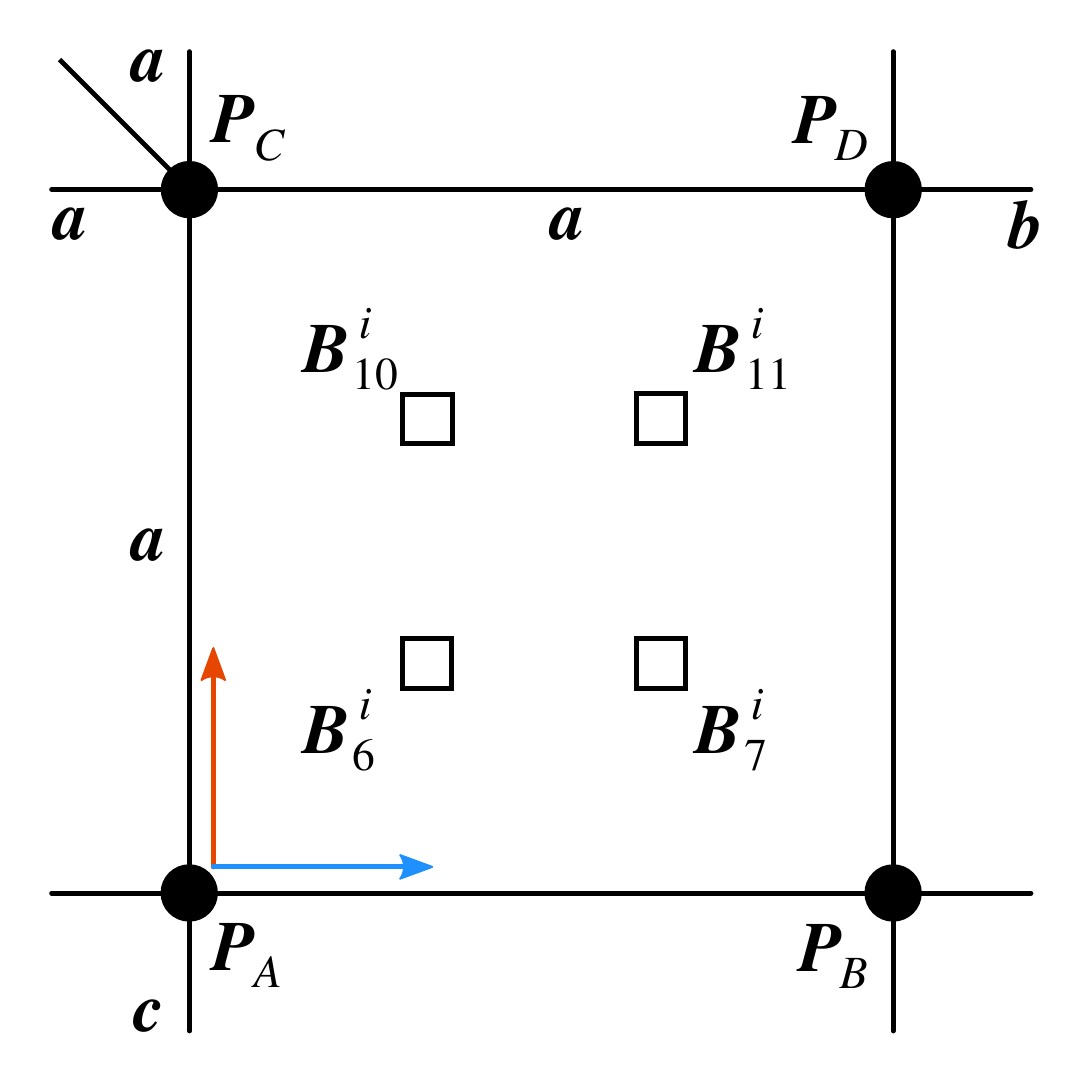}
\caption{Face with at least one EP. The edges emanating from the EP are required to have the same knot span $a$.} 
\label{aaaa}
\end{figure}

According to Fig. \ref{analysisspacemod2}, the relation between the face B\'ezier coefficients $B^i_6$, $B^i_7$, $B^i_{10}$, and $B^i_{11}$ of an irregular face and the face-based spline coefficients with indices $j_1$, $j_2$, $j_3$, and $j_4$ is the following
\begin{equation} \label{matrix55}
\left(
  \begin{array}{c}
    B^i_6 \\
    B^i_7 \\
    B^i_{10} \\
    B^i_{11} \\
  \end{array}
\right) = \left(
            \begin{array}{cccc}
              1 & 0 & 0 & 0 \\
              0 & 1 & 0 & 0 \\
              0 & 0 & 1 & 0 \\
              0 & 0 & 0 & 1 \\
            \end{array}
          \right)
          \left(
            \begin{array}{c}
              c_{j_1} \\
              c_{j_2} \\
              c_{j_3} \\
              c_{j_4} \\
            \end{array}
          \right).
\end{equation}
$\sum c_B M^0_B = 0$ implies that the face B\'ezier coefficients $B^i_6$, $B^i_7$, $B^i_{10}$, and $B^i_{11}$ are zero since the Bernstain polynomials are a basis. Thus, $ c_{j_k}= 0 \; \forall k \in \{1,2,3,4\}$ since the matrix in Eq. \ref{matrix55} is full rank, which means the spline coefficients $c_B$ associated with face-based functions are also zero. Therefore, $\{ M^0_B \}^{n_b}_{B=1}$ are linearly independent.

\end{proof}

\begin{theorem}\label{th:2}The blending functions of $\mathbb{S}^1_{D}$ and $\mathbb{S}^1_{A}$ are linearly independent.
\end{theorem}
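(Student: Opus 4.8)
The plan is to read Theorem~\ref{th:2} as an immediate corollary of the three lemmas that precede it, so that essentially no new argument is required. First I would observe that Lemma~\ref{lemma:2} supplies linear independence of $\{N^0_L\}^n_{L=1}$, the blending functions of $\mathbb{S}^0_D$, and that Lemma~\ref{lemma:3} supplies linear independence of $\{M^0_B\}^{n_b}_{B=1}$, the blending functions of $\mathbb{S}^0_A$. Taken together, these two facts are precisely the antecedent of the conditional statement in Lemma~\ref{lemma:1}.

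The second and final step is to invoke Lemma~\ref{lemma:1} itself, which asserts that linear independence of the blending functions of $\mathbb{S}^0_D$ and of $\mathbb{S}^0_A$ implies linear independence of the blending functions of $\mathbb{S}^1_D$ and of $\mathbb{S}^1_A$. Chaining these implications yields the theorem directly. Combined with the known results for regular faces influenced by T-junctions \cite{Li2012, zhang2015linear, DaVeiga2013}, this accounts for linear independence of the full spline bases.

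Because the genuine content has been pushed into the lemmas, I do not expect any real obstacle in the theorem itself; the only thing I would double-check is that the spaces $\mathbb{S}^0_D$ and $\mathbb{S}^0_A$ appearing in Lemmas~\ref{lemma:2} and~\ref{lemma:3} are exactly the design and analysis spaces \emph{before} the split-then-smoothen step (as stated at the start of Section~3), so that the hypothesis of Lemma~\ref{lemma:1} is matched with no gap. If one insisted on locating the hard part, it lives inside the proof of Lemma~\ref{lemma:1}: showing that the $2\times2$ de Casteljau split followed by the D-patch smoothing preserves linear independence, which is handled there by tracking the B\'ezier coefficients that the smoothing leaves untouched and checking that the induced transfer matrices ($\vec{M}_1$ along an open strip of totally irregular faces and $\vec{M}_2$ along a closed one) are diagonally dominant, hence full rank.
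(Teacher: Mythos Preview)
Your proposal is correct and matches the paper's own proof essentially verbatim: the paper simply states that the theorem is a direct consequence of Lemmas~\ref{lemma:1}, \ref{lemma:2}, and \ref{lemma:3}. Your additional remarks about where the real work lies (the full-rank transfer matrices $\vec{M}_1$, $\vec{M}_2$ inside Lemma~\ref{lemma:1}) and about the T-junction results being handled separately are accurate and already absorbed into those lemmas, so nothing further is needed here.
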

\begin{proof}This theorem is a direct consequence of  Lemma \ref{lemma:1}, Lemma \ref{lemma:2}, and Lemma \ref{lemma:3}.
\end{proof}

Eqs \eqref{5theq} - \eqref{lasteq2} lead to basis functions that form a non-negative partition of unity \cite{toshniwal2017smooth, Borden2011}. Therefore, the basis functions of $\mathbb{S}^0_D$ form a non-negative partition of unity. As explained in Section 2.4.2, truncation is used so that the basis functions of $\mathbb{S}^0_A$ form a non-negative partition of unity. Since we are using a smoothing matrix that is affine-invariance and has non-negative entries, the split-then-smoothen approach results in basis functions that maintain a non-negative partion of unity \cite{toshniwal2017smooth}. Therefore, the basis functions of $\mathbb{S}^1_D$ and $\mathbb{S}^1_A$ form a non-negative partition of unity.

\section{Convergence rates}

\begin{figure} [t!] 
 \centering
\subfigure[]{\includegraphics[scale=0.05]{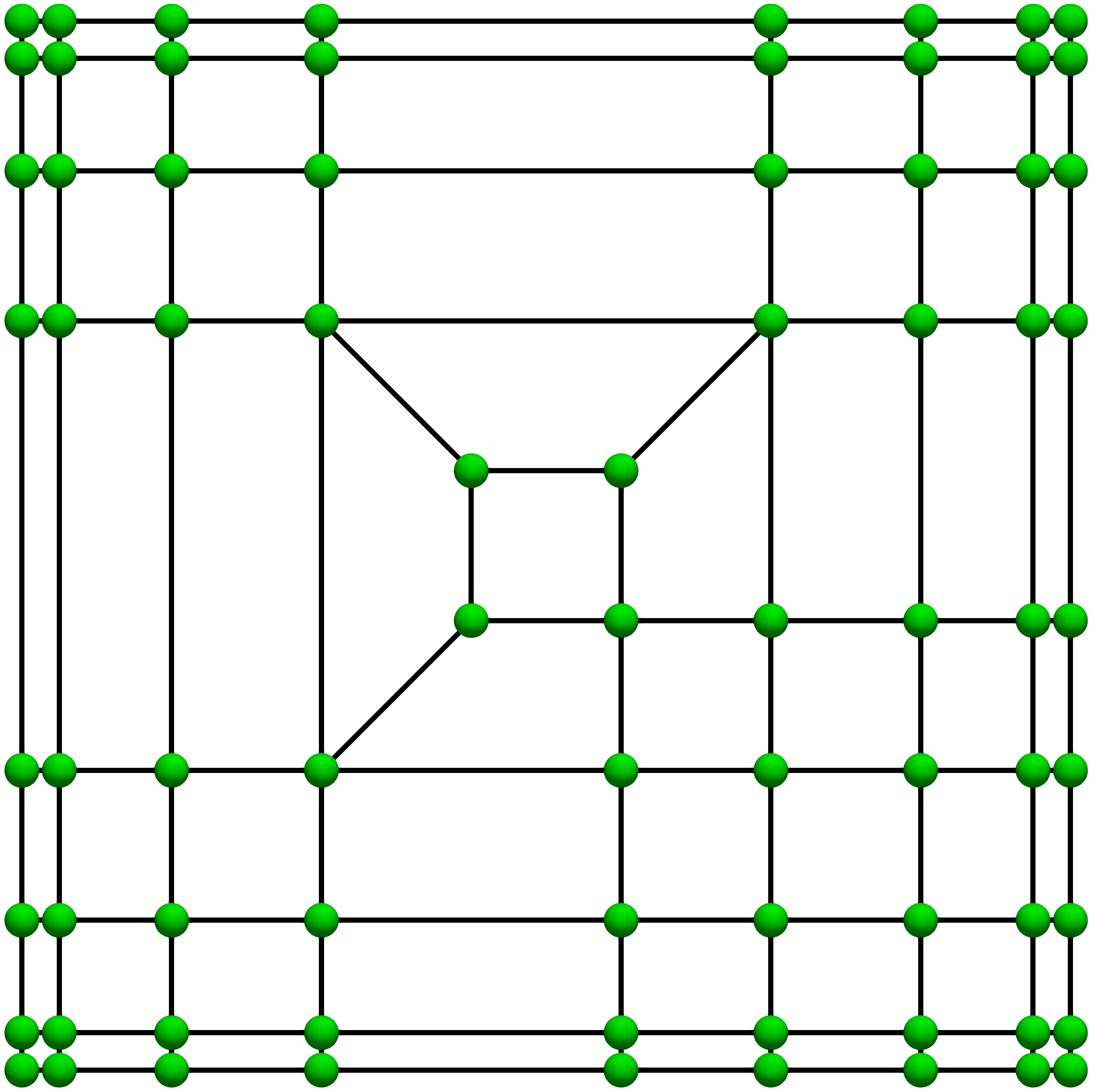}} \hspace*{+8mm}
 \subfigure[]{\includegraphics[scale=0.149]{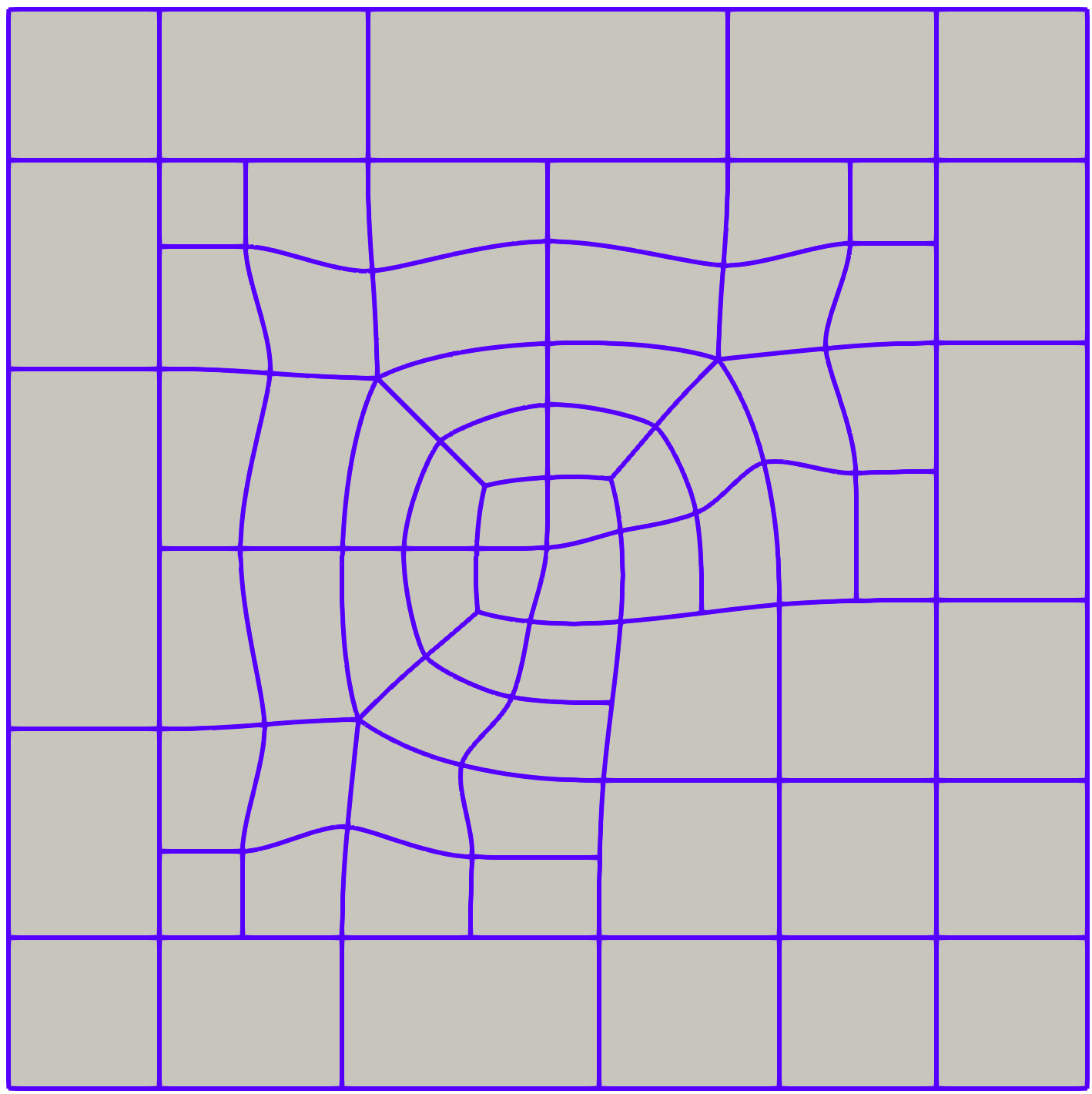}} \\
\caption{Unit square used to compute convergence rates. (a) Control net. (b) B\'ezier mesh. Both totally irregular faces and partially irregular faces are included in this geometry.}
\label{cnbm}
\end{figure}

When defining $C^1$-continuous basis functions around EPs, a difficult challenge is to obtain spaces with optimal approximation properties \cite{collin2016analysis, nguyen2014comparative}. We mesh a unit square ($\Omega = [0,1]^2$) introducing both totally irregular faces and partially irregular faces. The control net and the B\'ezier mesh are shown in Fig. \ref{cnbm} (a) and (b), respectively. After that, five levels of global uniform refinement are performed. We solve both the Poisson equation (second-order linear elliptic problem) and the biharmonic equation (fourth-order linear elliptic problem) using the approach of manufactured solutions.

\begin{figure} [t!] 
 \centering
\subfigure[]{\includegraphics[scale=0.53]{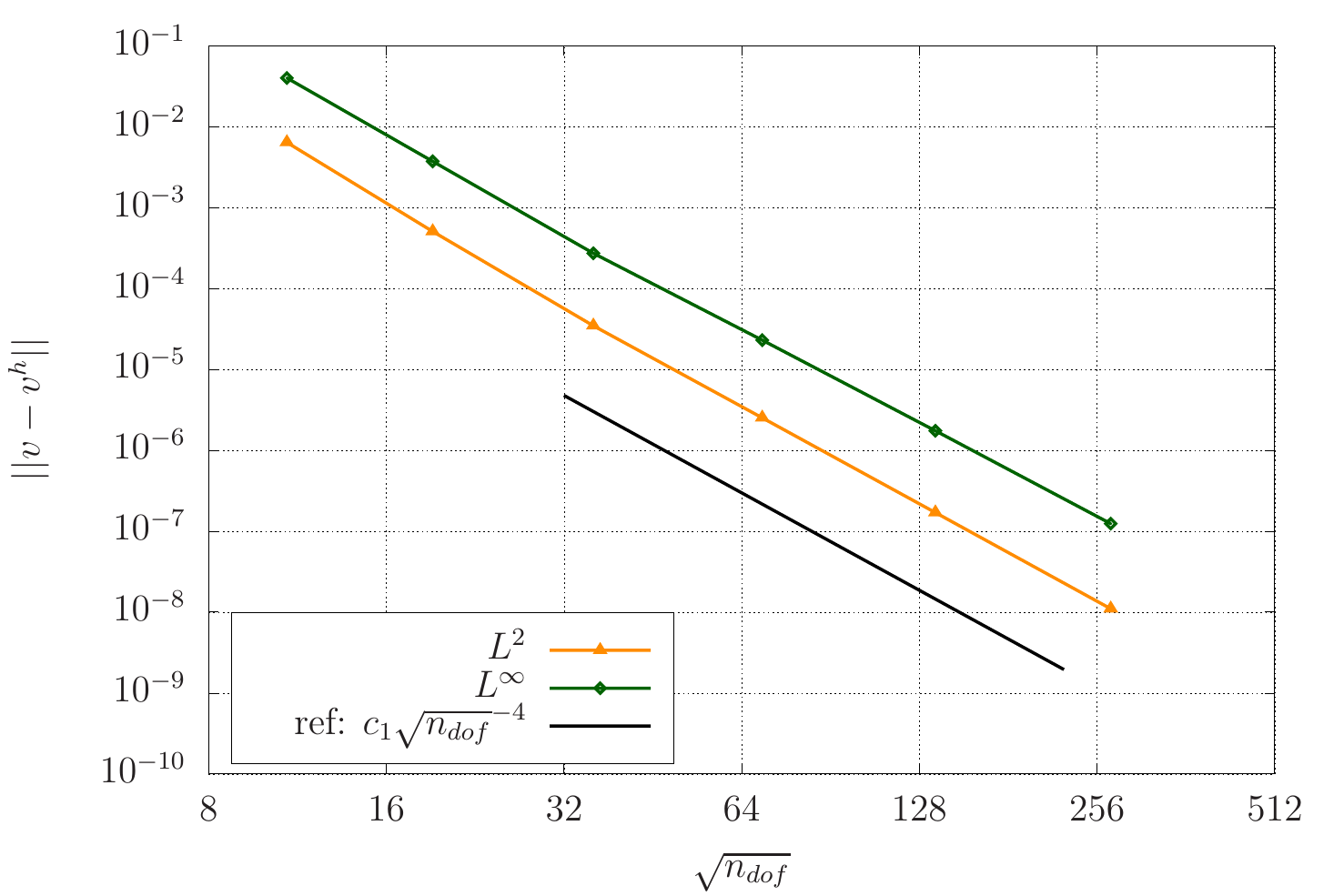}}
 \subfigure[]{\includegraphics[scale=0.53]{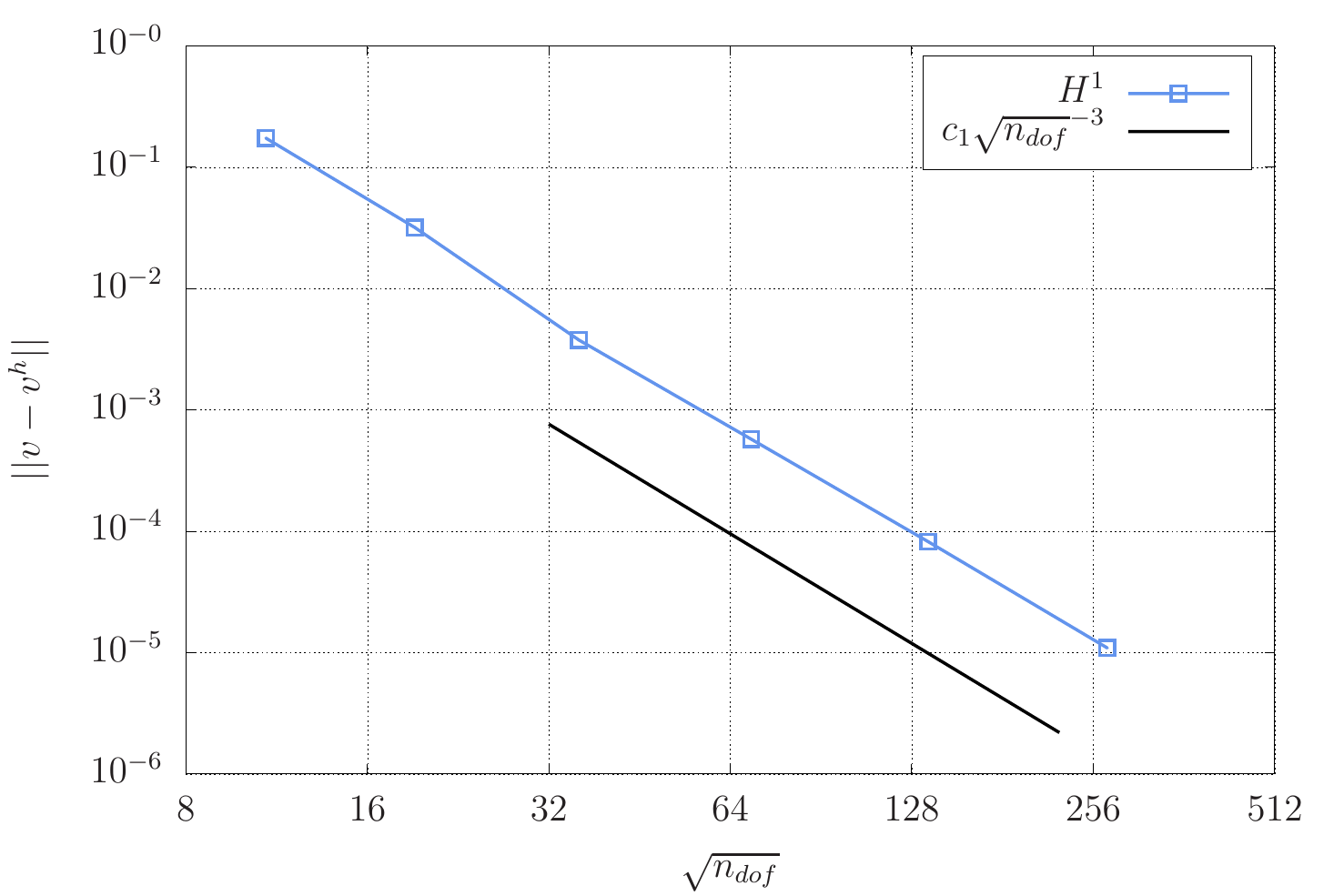}} \\
\caption{Poisson equation. (a) Convergence rates in $L^2$ and $L^{\infty}$ norms. (b) Convergence rate in $H^1$ norm.}
\label{poisson}
\end{figure}

The benchmark problem for the Poisson equation is defined as
\begin{align}
 & \Delta v  = g \quad & \text{in}      &  \quad \Omega  \text{,} \\ 
  & v  = 0 \quad & \text{in}      &  \quad \partial \Omega  \text{,}   \\
 & g  = - 2 \pi^2 \text{sin}(\pi x)\text{sin}(\pi y) \quad & \text{in}      &  \quad \Omega  \text{,} 
\end{align}
for which the exact solution is $v = \text{sin}(\pi x)\text{sin}(\pi y)$. The convergence rates in $L^2$, $L^{\infty}$, and $H^1$ norms for the four cases considered here are plotted in Fig. \ref{poisson} (b), (c), and (d), respectively. Optimal convergence rates are obtained.

\begin{figure} [t!] 
 \centering
\subfigure[]{\includegraphics[scale=0.53]{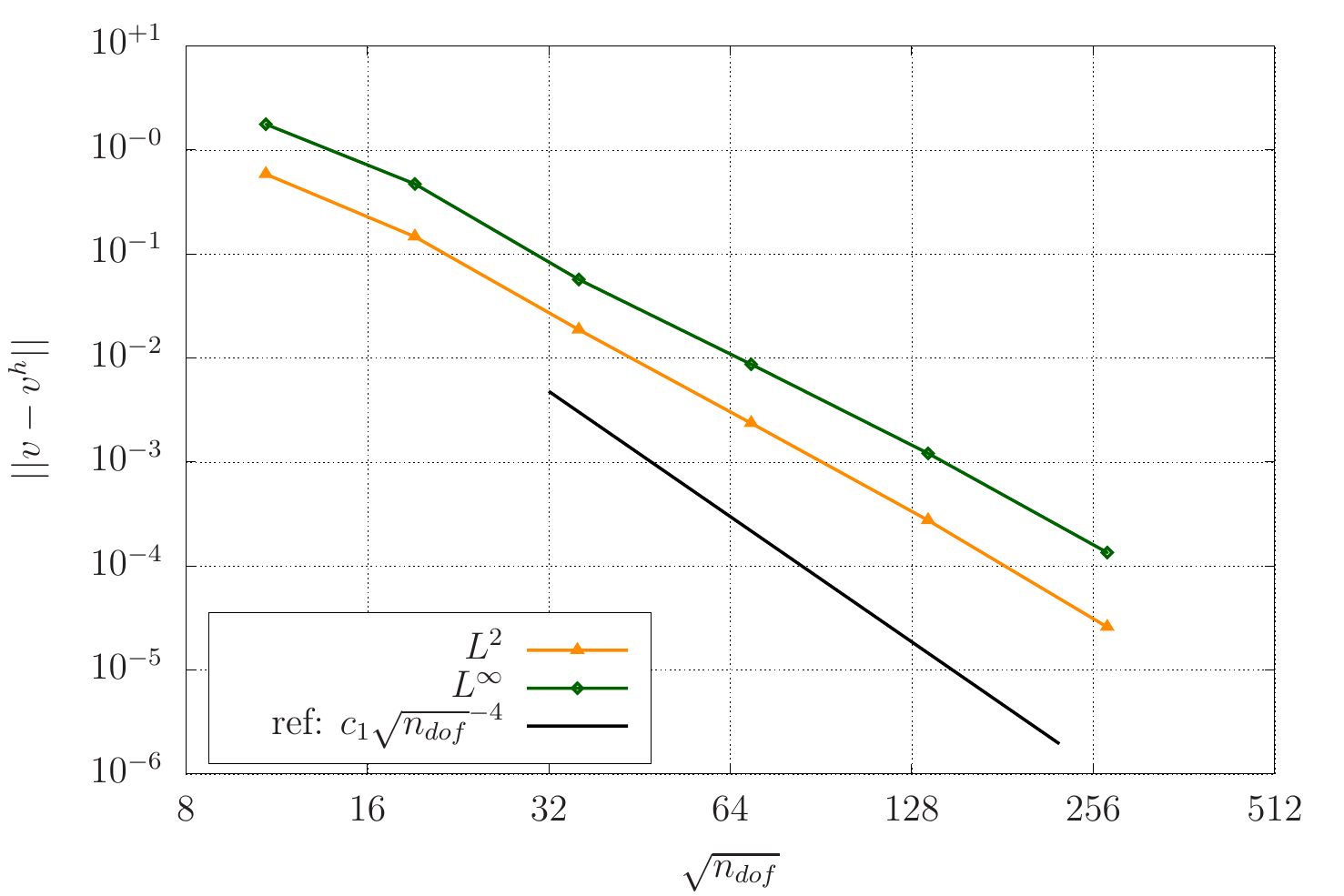}}
 \subfigure[]{\includegraphics[scale=0.53]{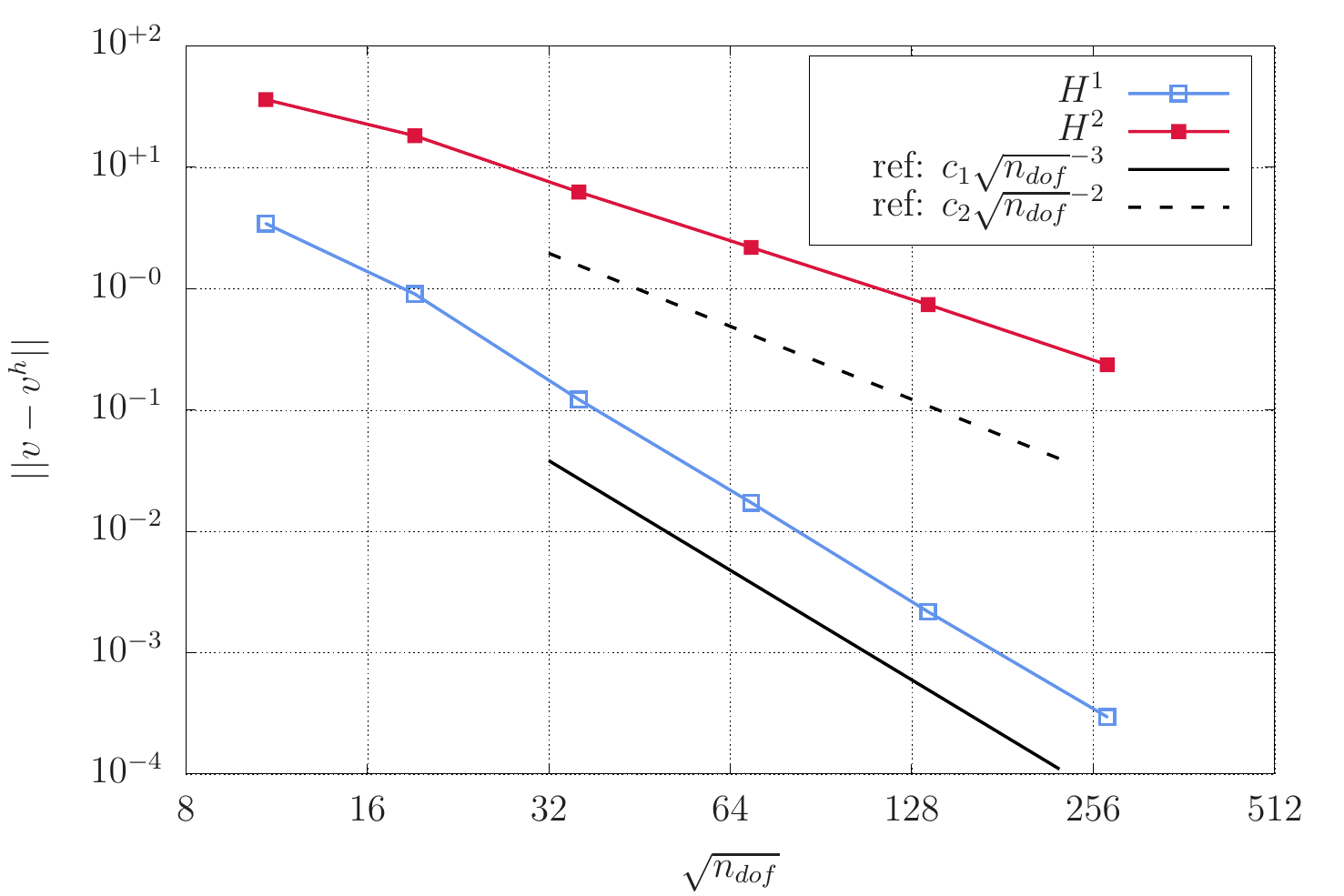}} \\
\caption{Biharmonic equation. (a) Convergence rates in $L^2$ and $L^{\infty}$ norms. (b) Convergence rates in $H^1$ and $H^2$ norms.}
\label{biharmonic}
\end{figure}

The benchmark problem for the biharmonic equation is defined as
\begin{align}
 & \Delta^2 v  = g \quad & \text{in}      &  \quad \Omega  \text{,} \\ 
  & v  = 0 \quad & \text{in}      &  \quad \partial \Omega  \text{,}   \\
 & \nabla v \cdot \mathbf{n} = 0 \quad & \text{in}      &  \quad \partial \Omega  \text{,}  \\
 & g  = - 16 \pi^4 (\text{cos}(2\pi x) - 4\text{cos}(2\pi x)\text{cos}(2\pi y) + \text{cos}(2\pi y)) \quad & \text{in}      &  \quad \Omega  \text{,} 
\end{align}
for which the exact solution is $v=(1 - \text{cos}(2\pi x))(1 - \text{cos}(2\pi x))$. The convergence rates in $L^2$, $L^{\infty}$, $H^1$, and $H^2$ norms for the four cases considered here are plotted in Fig. \ref{biharmonic} (a), (b),(c), and (d), respectively. Optimal convergence rates are obtained.

\section{Automotive applications}

In this section, we begin designing two structural components of an automobile using T-splines with the commercial software Autodesk Fusion360. Since Fusion360 currently handles EPs in a way that does not lead to analysis-suitable spaces, the blending functions created by Fusion360 should not be used in structural analysis. A workaround is to export the control net from Fusion360 and use those control points as the control points of $\mathbb{S}^1_{D}$, i.e., combine the control points of Fusion360 with the basis functions of AST-splines developed in Section 2.4. We observe that the surface stays essentially unchanged for complex geometries such as the B-pillar and the side outer panel of a car (the area change is lower than $0.1\%$). This workflow is exemplified in Fig. \ref{workflow} using the geometry of the side outer panel. In any case, this is a temporary workaround until there are CAD programs that design surfaces from scratch using AST-splines. 

\begin{figure} [t!] 
 \centering
\subfigure[T-spline surface]{\includegraphics[scale=0.38]{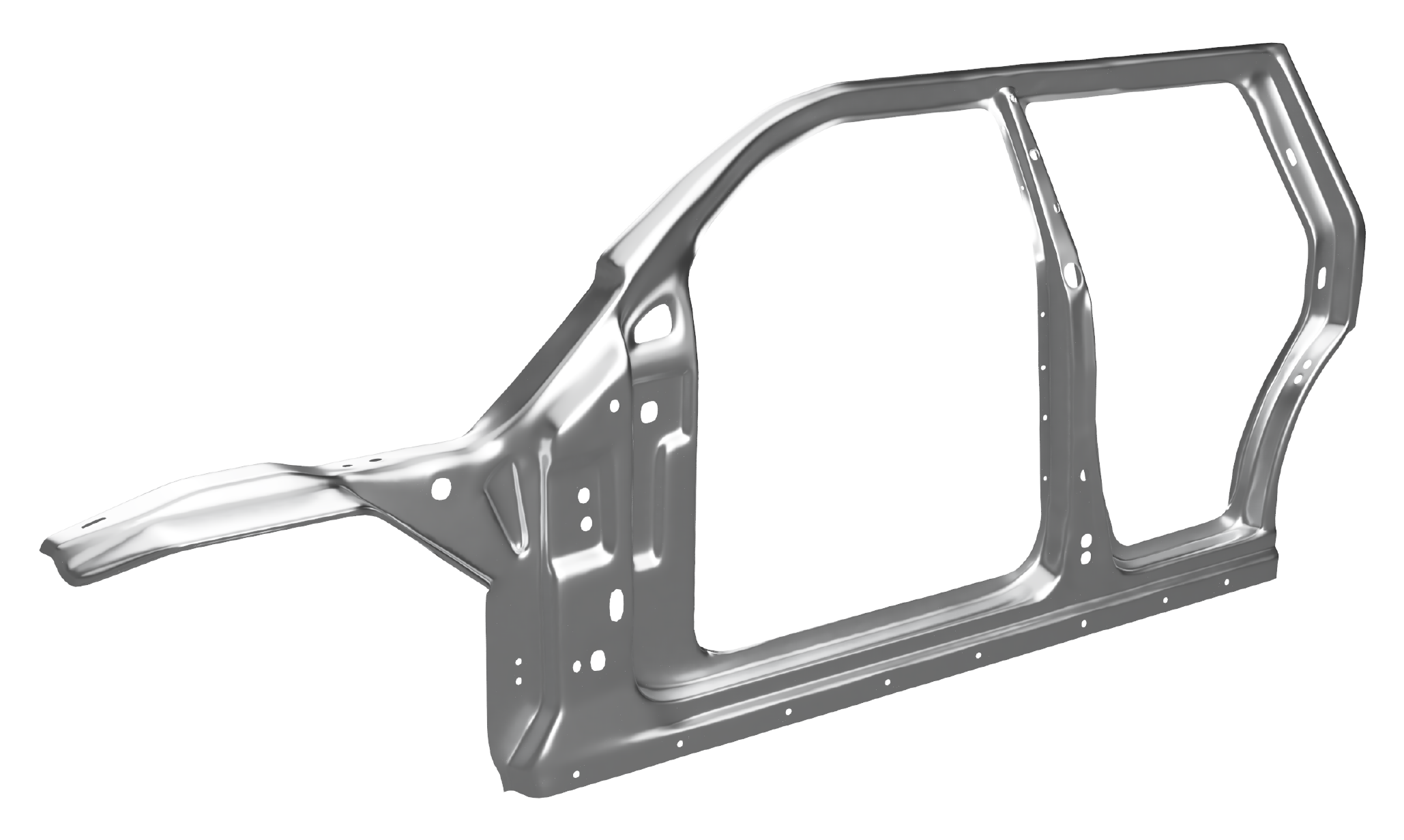}}
 \subfigure[Control net]{\includegraphics[scale=0.38]{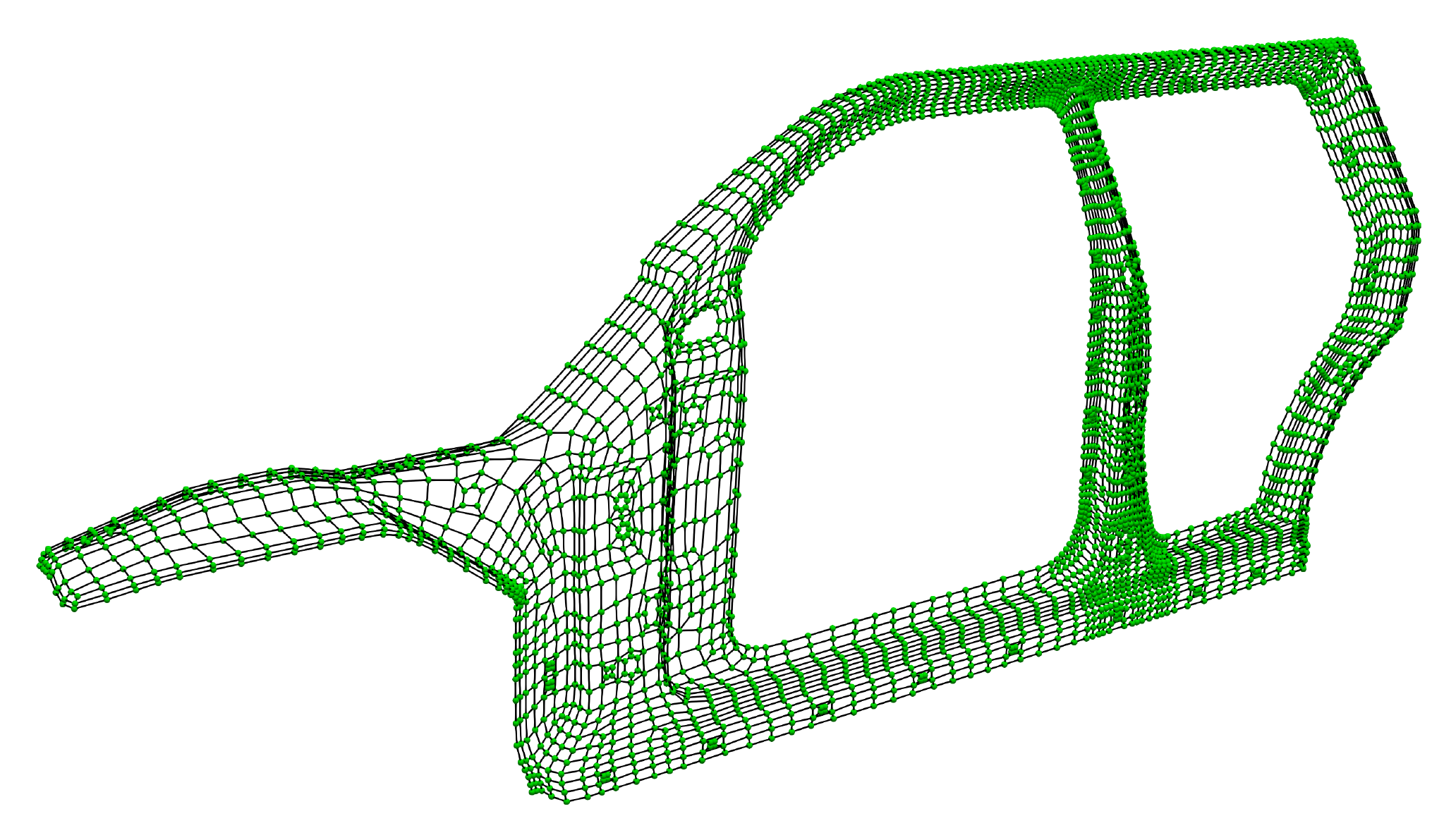}} \\
  \subfigure[AST-spline surface]{\includegraphics[scale=0.38]{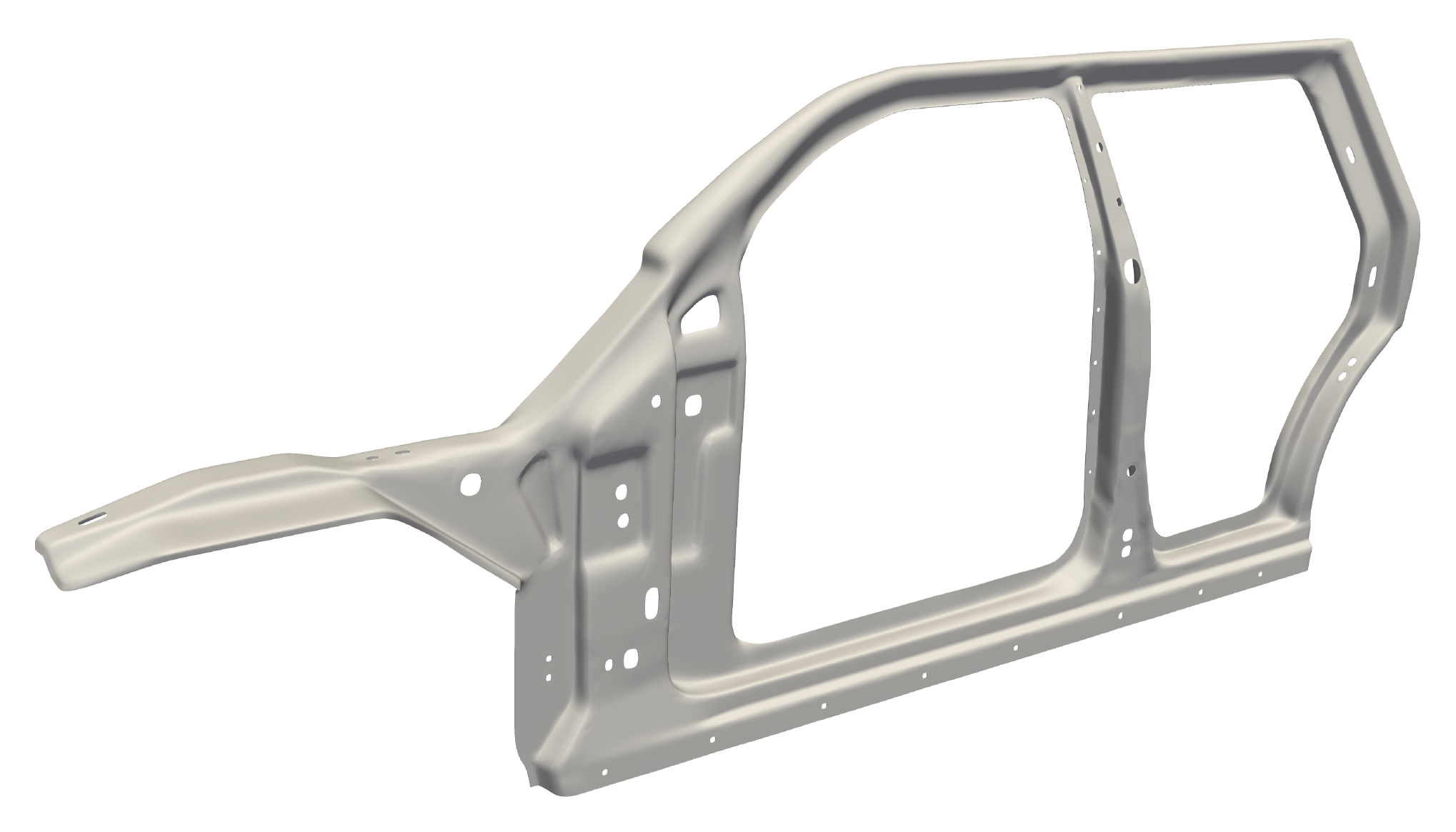}}
 \subfigure[EP layout]{\includegraphics[scale=0.38]{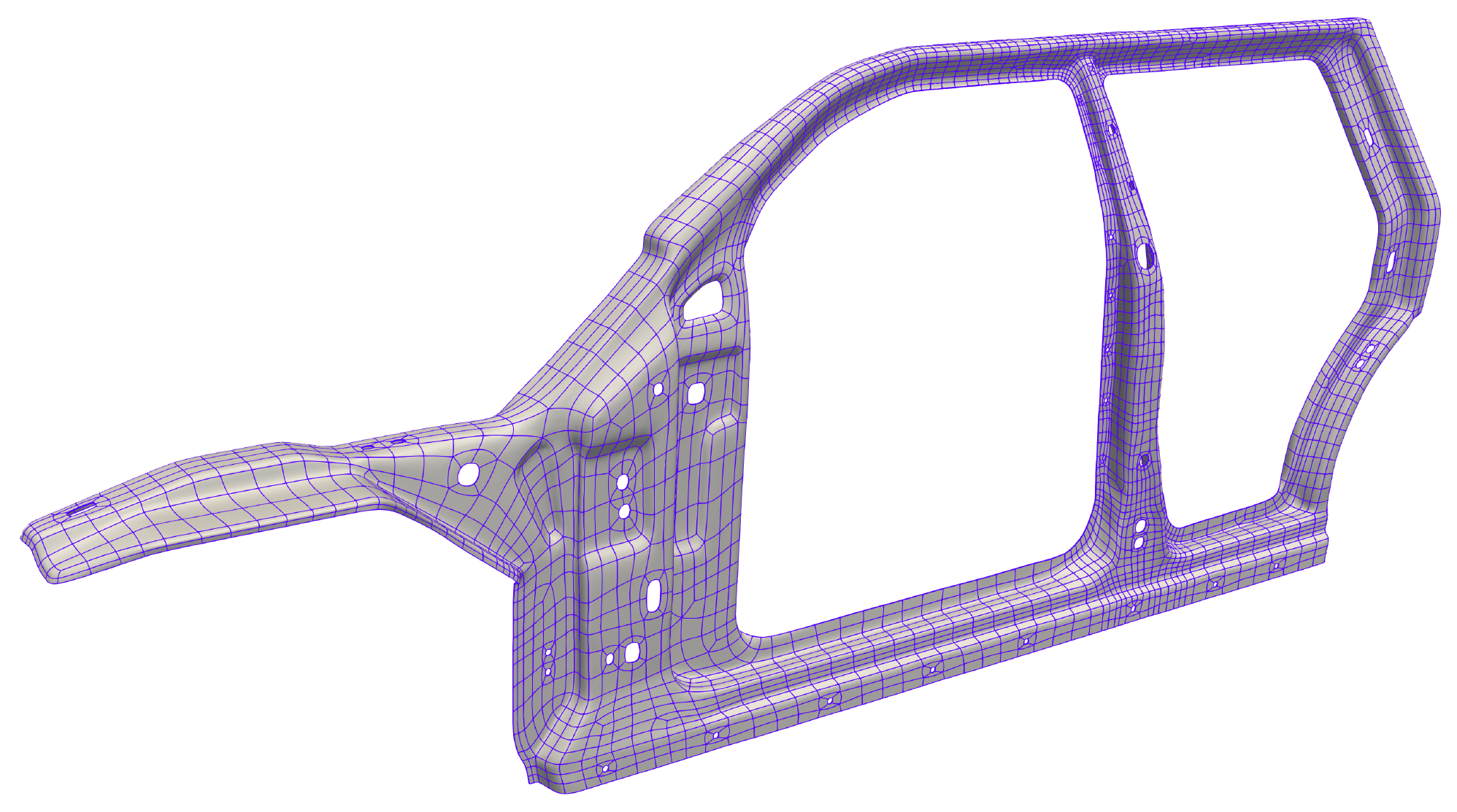}} \\
\caption{Side outer panel. (a) T-spline surface in rendered view designed from scratch in Autodesk Fusion360. (b) Control net exported from Autodesk Fusion360 associated with the T-spline surface shown in (a). (c) AST-spline surface computed from the control net shown in (b). (d) AST-spline surface with the face boundaries plotted on top of it to show the distribution of EPs throughout the geometry.}
\label{workflow}
\end{figure}

In \cite{casquero2020seamless}, we used our AST-spline surfaces to perform geometrically nonlinear Kirchhoff-Love shell simulations. Here, we take a different path. We thicken our AST-spline surfaces using cubic B-splines in the thickness direction. The result is an AST-spline volume with $C^1$ global continuity for a given thickness value. AST-spline volumes, as any other type of splines that admit B\'ezier extraction, can be imported in the commercial software LS-DYNA. In the following, we solve eigenvalue problems with our AST-spline volumes in LS-DYNA and compare the results with various conventional solid discretizations based on trilinear hexahedral meshes available in LS-DYNA.

The use of solid formulation for thin-walled structures as opposed to shell formulations is becoming more common in automotive applications. One of the main reasons is that Kirchhoff-Love shells, Reissner-Mindlin shells, and also other types of shells with more complex formulations in the thickness direction often fail to accurately capture the stress triaxiality of the thin-walled structure. Stress triaxiality plays a key role in predicting ductile fracture. As a result, solid formulations are often found to match experimental data in crash simulations significantly better than shell formulations. This is the motivation behind considering AST-spline volumes in this work.

\subsection{B-pillar}

\begin{figure} [t!] 
 \centering
\includegraphics[scale=0.4]{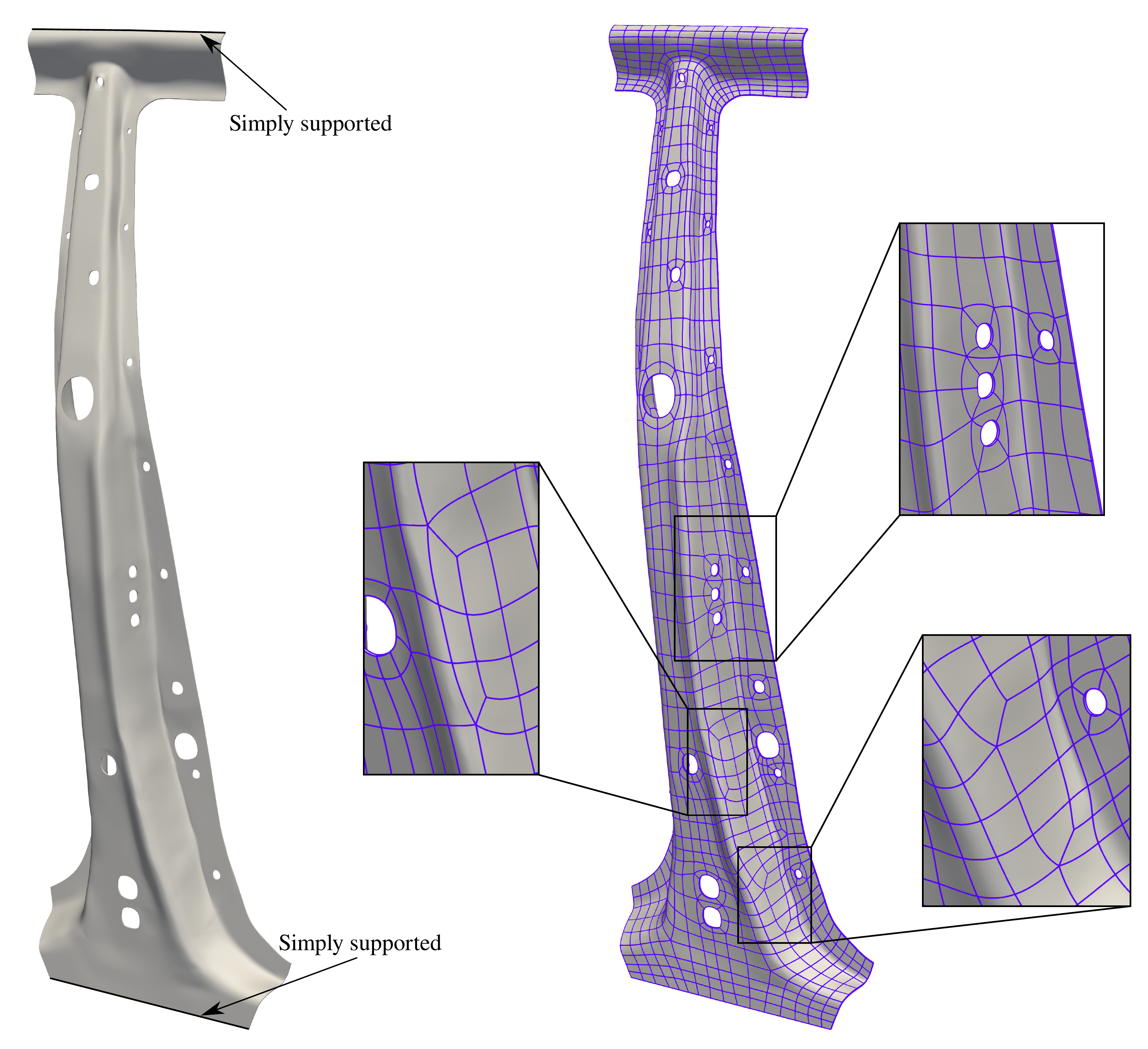}
\caption{Geometry and boundary conditions for the B-pillar. The AST-spline volume with the face boundaries colored in violet shows the positions of the EPs throughout the geometry.}
\label{bpillarbc}
\end{figure}

The geometry considered in this example is the B-pillar of an automobile. The boundary conditions are indicated in Fig. \ref{bpillarbc}. The parameters used in this problem are
\begin{equation}    \label{101}
t = 1.6 \text{,}  \quad  E = 2.1 \times 10^5  \text{,} \quad  \nu = 0.25  \text{,} 
\end{equation}
where $t$ is the thickness, $E$ is the Young modulus, and $\nu$ is the Poisson ratio. The geometry has 26 holes. The AST-spline surface contains 8 EPs with valence 3, 104 EPs with valence 5, and 7 EPs with valence 6. The AST-spline volume with one cubic B-spline element in the thickness direction has 11,980 ($2,995 \cdot 4$) control points. We compute the lowest eigenvalue with AST-splines and perform various comparisons with conventional finite elements below.

   \begin{table}[t!]
   \caption{Lowest eigenvalue for different number of elements in the thickness direction. The AST-spline has one element along the midsurface. The finite-element mesh has five elements along the midsurface.} \label{tablebpillar}
   \bigskip
     \centering
     \begin{tabular}{|c|c|c|c|c|}
\hline
  & AST-splines & ELFORM 2 & ELFORM 1 &  ELFORM -2 \\
\hline
1 element  & 2.6460e+21  & 9.1197e+21 & 8.4638e+20 & 4.1211e+21  \\    
\hline       
2 elements  & 2.6460e+21  & 9.1672e+21 & 2.5043e+21 & 4.5127e+21 \\    
\hline   
3 elements  & 2.6460e+21  & 9.1759e+21 & 2.7425e+21 & 4.6020e+21 \\    
\hline       
4 elements  & 2.6460e+21  & 9.1790e+21 & 2.8229e+21 & 4.6407e+21 \\    
\hline  
5 elements  & 2.6460e+21  & 9.1804e+21 & 2.8607e+21 & 4.6635e+21 \\    
\hline           
     \end{tabular}     
   \end{table}

\begin{figure} [t!] 
\centering
\includegraphics[width=9cm]{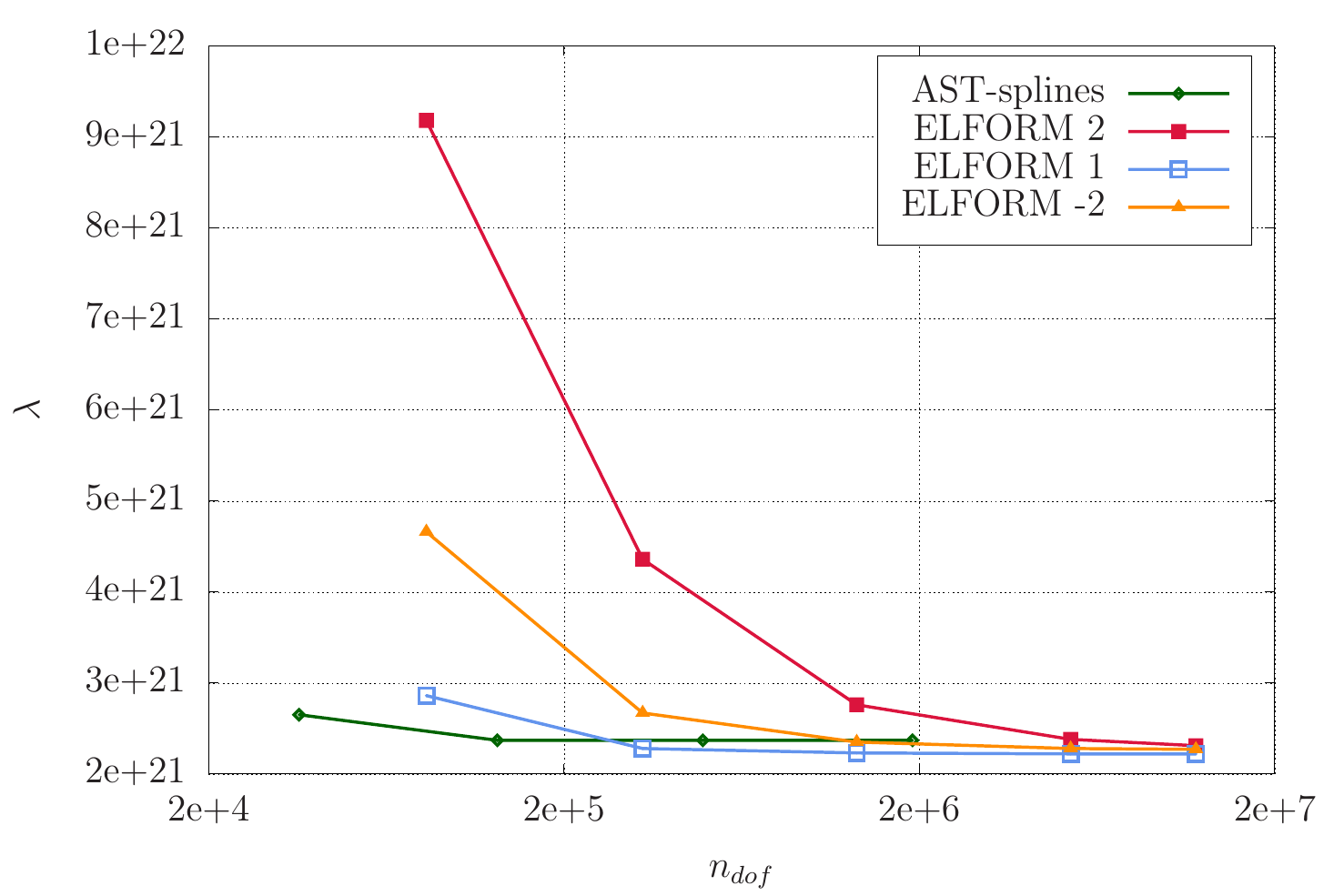}
\caption{(Color online) Lowest eigenvalue using AST-splines and conventional finite elements ELFORM 2, 1, and -2. For coarse discretizations, AST-splines are closer to the converged result than conventional finite elements.} 
\label{bpillareigenvalue}
\end{figure}

First of all, we study how many elements are needed in the thickness direction to reach a solution independent of the number of elements employed in this direction. We check this for both AST-spline volumes and the conventional finite element formulations available in LS-DYNA through the options ELFORM 2, ELFORM 1, and ELFORM -2. ELFORM 2 uses trilinear hexahedral elements with eight quadrature points. ELFORM 1 uses trilinear hexahedral elements with one quadrature point to alleviate shear locking. ELFORM -2 uses trilinear hexadral elements with eight quadrature points and an assumed strain approach to alleviate shear locking. As shown in Table \ref{tablebpillar}, one element is enough for AST-splines while at least five elements are needed for conventional finite elements. We now study the resolution needed in the two surface directions to obtain a converged result with both AST-spline volumes (one element in the thickness direction) and conventional finite elements (five elements in the thickness direction). As shown in Fig. \ref{bpillareigenvalue}, AST-spline volumes reach a converged result with significantly fewer degrees of freedom than conventional finite element discretizations.

\begin{figure} [t!] 
 \centering
\subfigure[Geometry]{\includegraphics[scale=0.4]{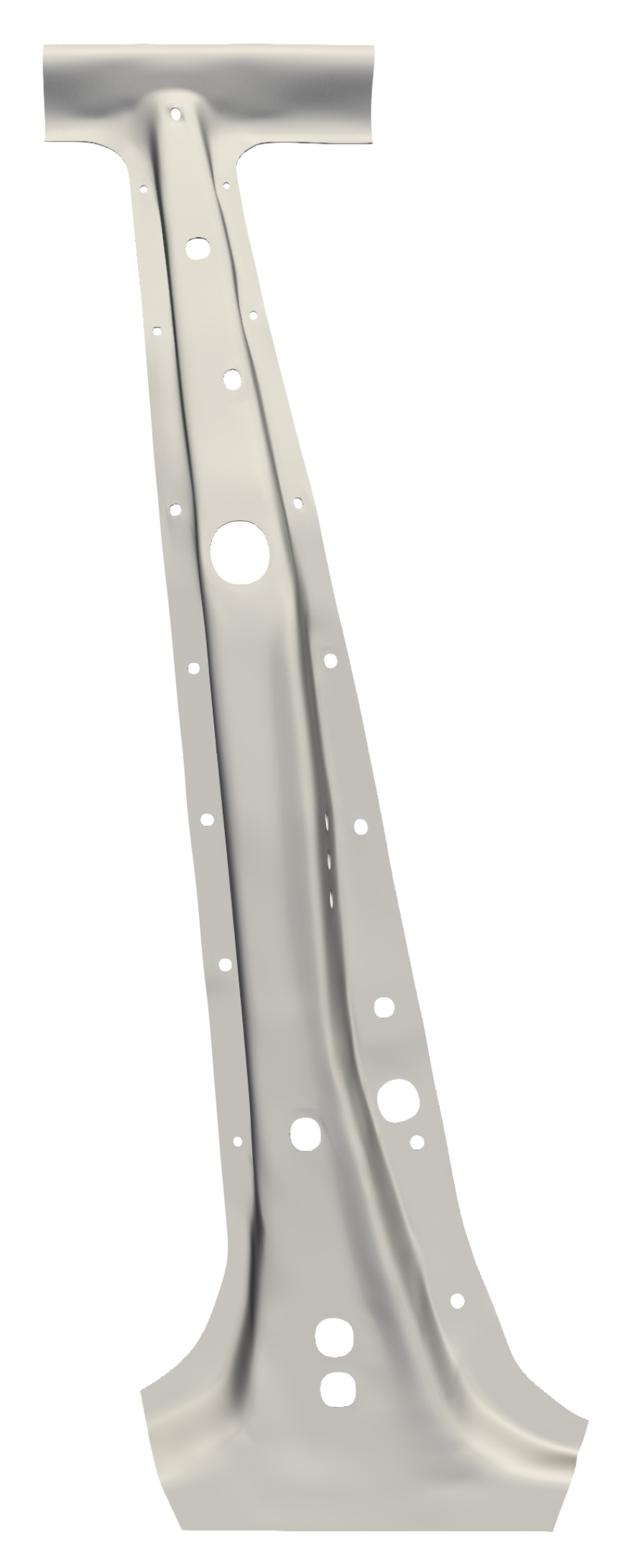}}
 \subfigure[IGA result]{\includegraphics[scale=0.4]{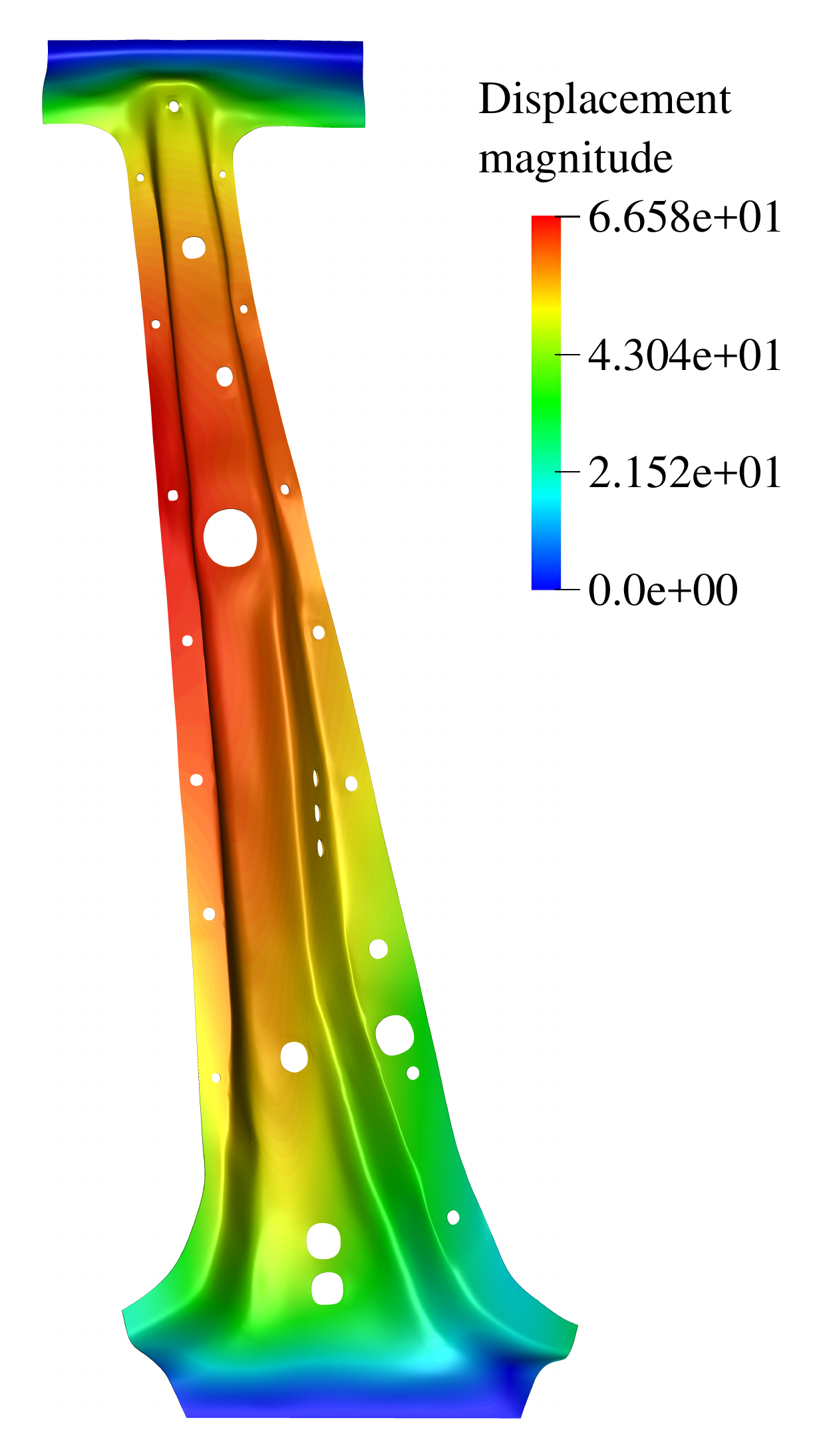}}
 \subfigure[FEM result]{\includegraphics[scale=0.4]{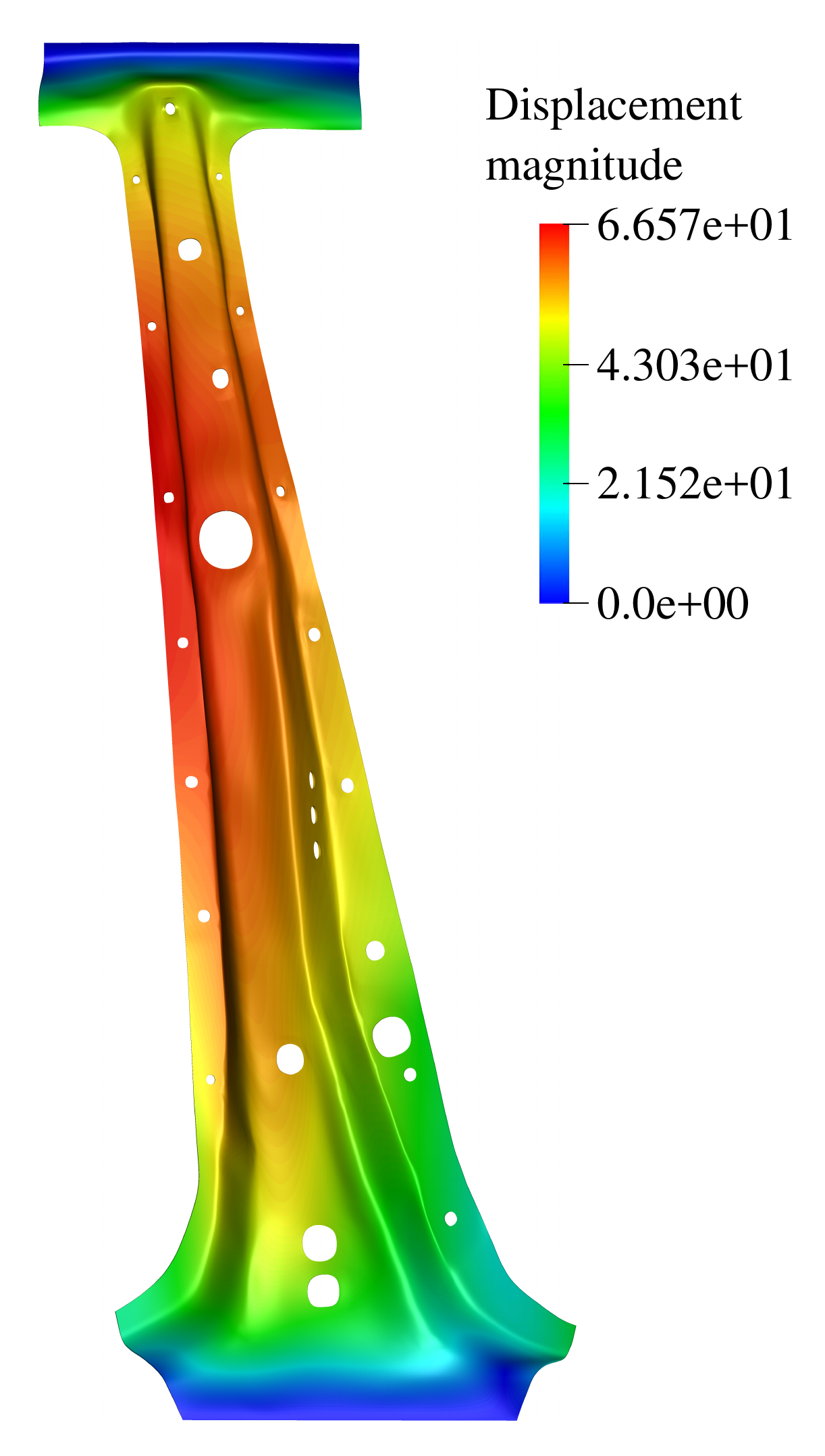}}
\caption{B-pillar. (a) Undeformed Geometry. (b) The first mode shape using AST-splines. (c) The first mode shape using conventional finite elements.}
\label{bpillarresults}
\end{figure}

In Figs. \ref{bpillarresults} (a)-(c), we plot front views of the underformed geometry, the first mode shape using AST-splines, and the first mode shape using trilinear hexahedral elements with eight quadrature points (ELFORM 2 in LS-DYNA), respectively.

\subsection{Side outer panel}

\begin{figure} [t!] 
 \centering
\includegraphics[scale=0.4]{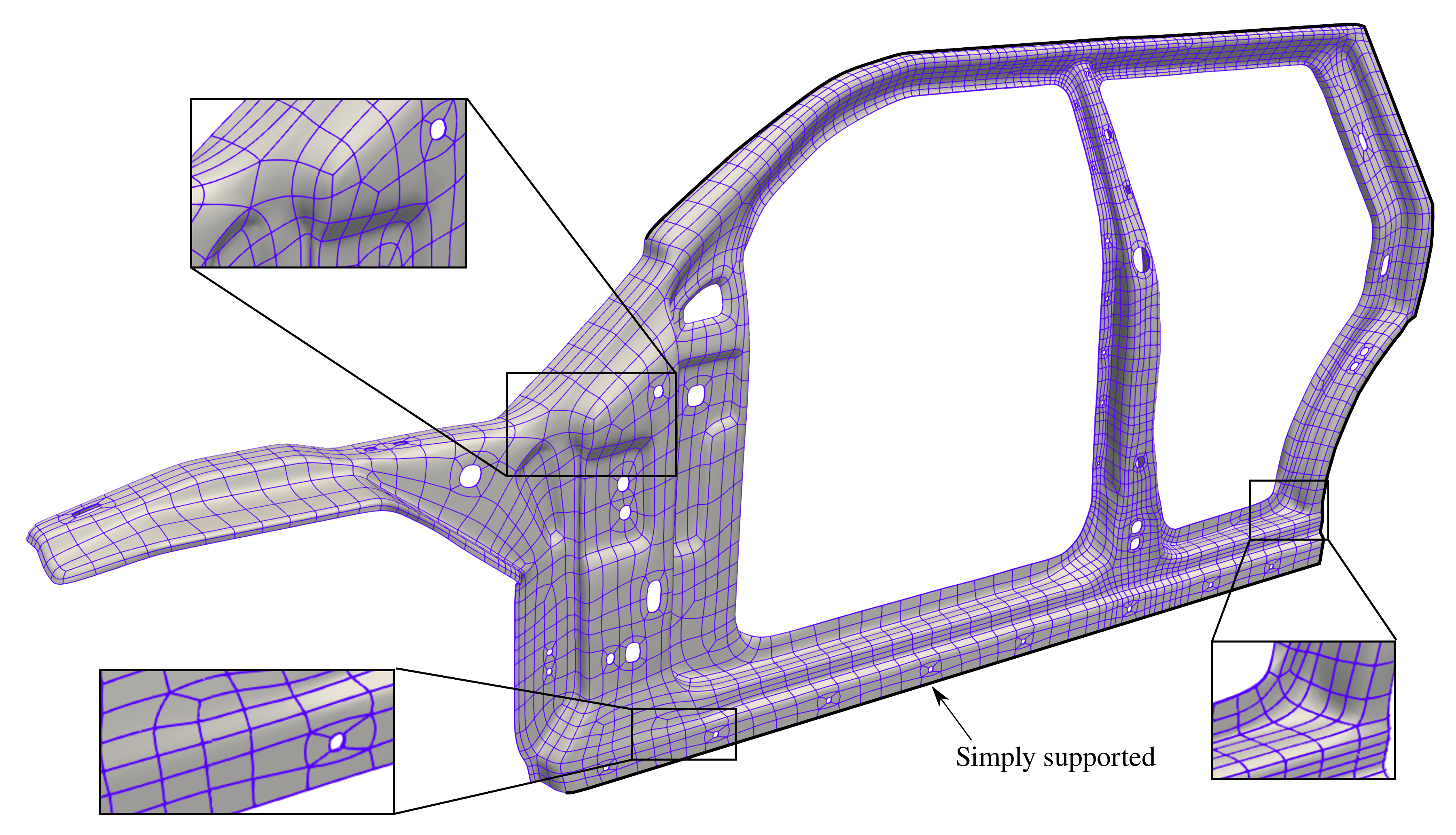}
\caption{Geometry and boundary conditions for the side outer panel. The AST-spline volume with the face boundaries colored in violet shows the positions of the EPs throughout the geometry.}
\label{sopbc}
\end{figure}

The side outer panel is the geometry considered in this example. The boundary conditions are indicated in Fig. \ref{sopbc}. The values of the parameters used in this problem are
\begin{equation}    \label{101}
t = 1.6 \text{,}  \quad  E = 2.1 \times 10^5  \text{,} \quad  \nu = 0.25  \text{,} 
\end{equation}
The geometry has 52 holes. The AST-spline surface contains 46 EPs with valence 3, 222 EPs with valence 5, and 19 EPs with valence 6. The AST-spline volume with one cubic B-spline element in the thickness direction has  31,588 ($7,897 \cdot 4$) control points. Fig. \ref{sopbc} shows the AST-spline volume with the face boundaries in violet color. Note that the 1-ring faces of EPs are divided into four B\'ezier elements, but we do not plot these additional lines in Fig. \ref{sopbc} so that the arrangement of the EPs throughout the geometry can be observed clearly.

\begin{figure} [t!] 
 \centering
 \subfigure[]{\includegraphics[scale=0.28]{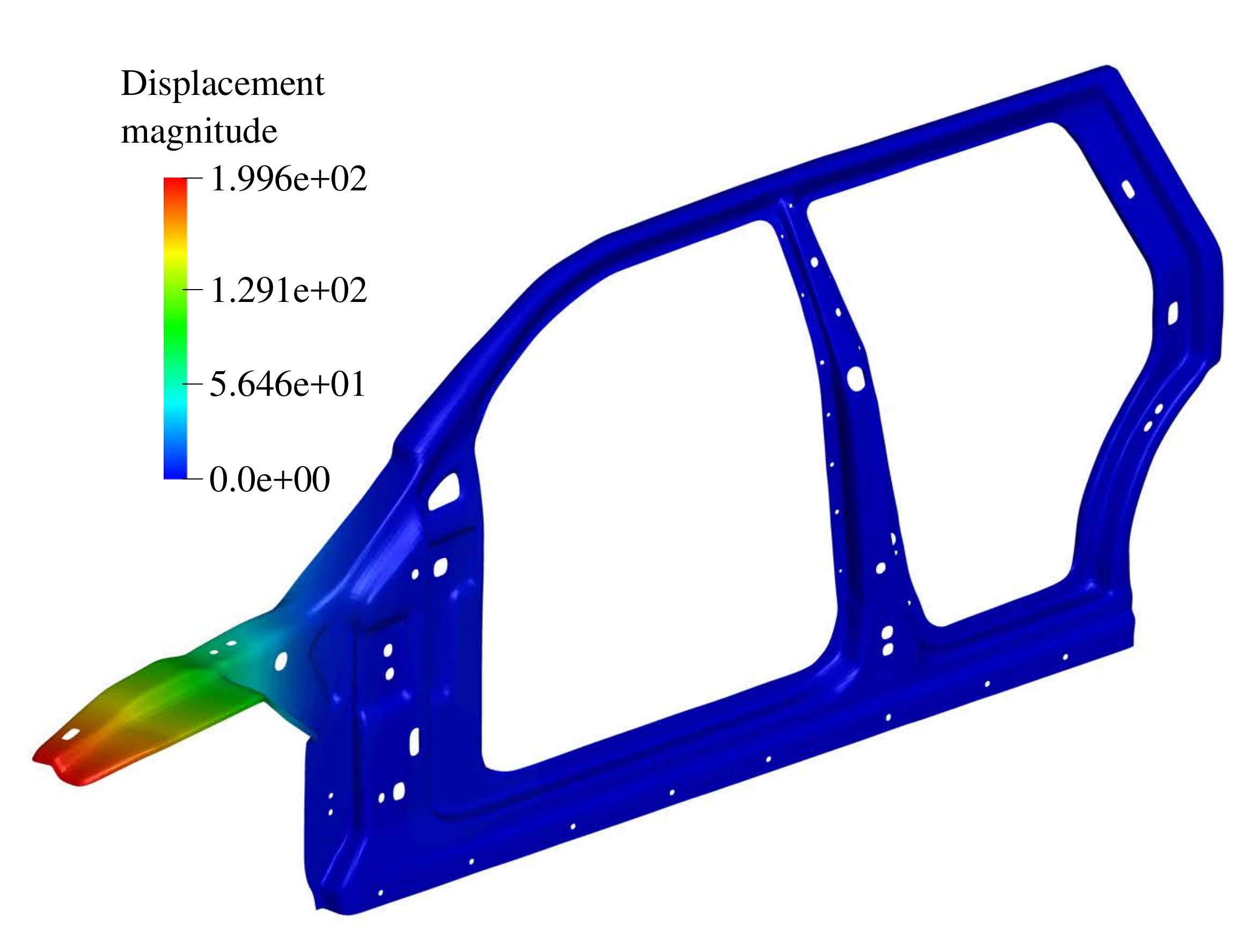}}
 \subfigure[]{\includegraphics[scale=0.28]{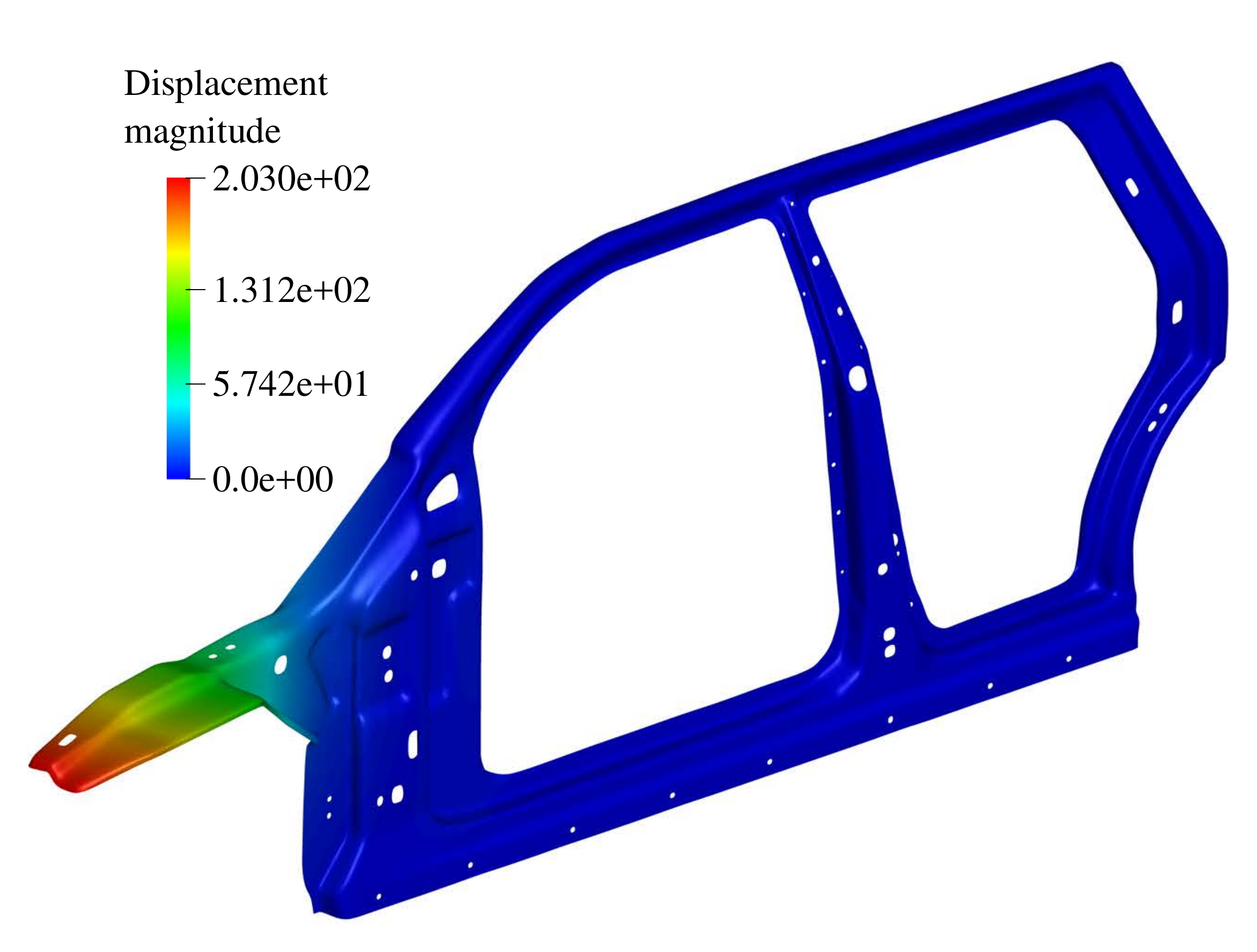}} \\
 \subfigure[]{\includegraphics[scale=0.28]{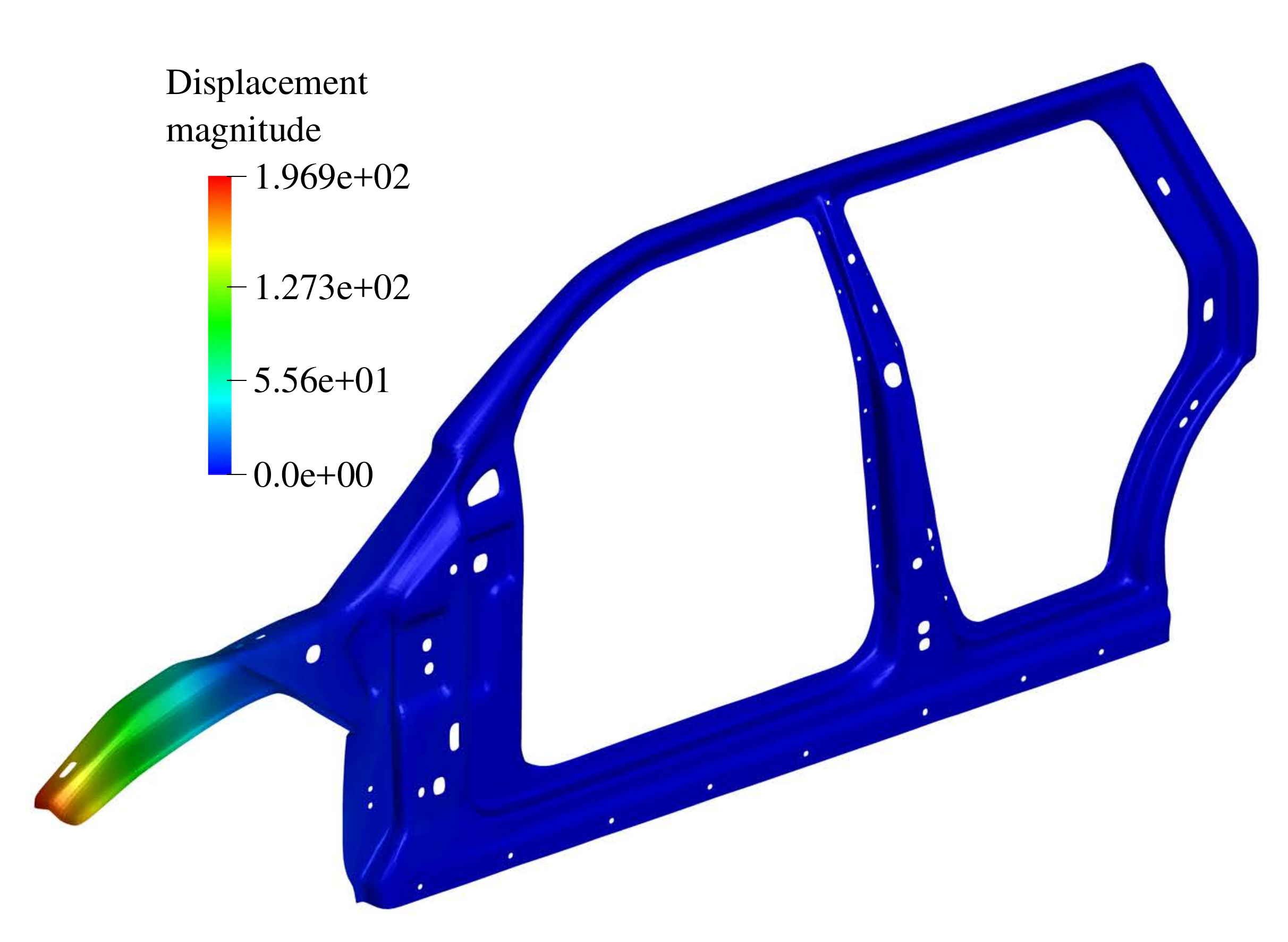}}
 \subfigure[]{\includegraphics[scale=0.28]{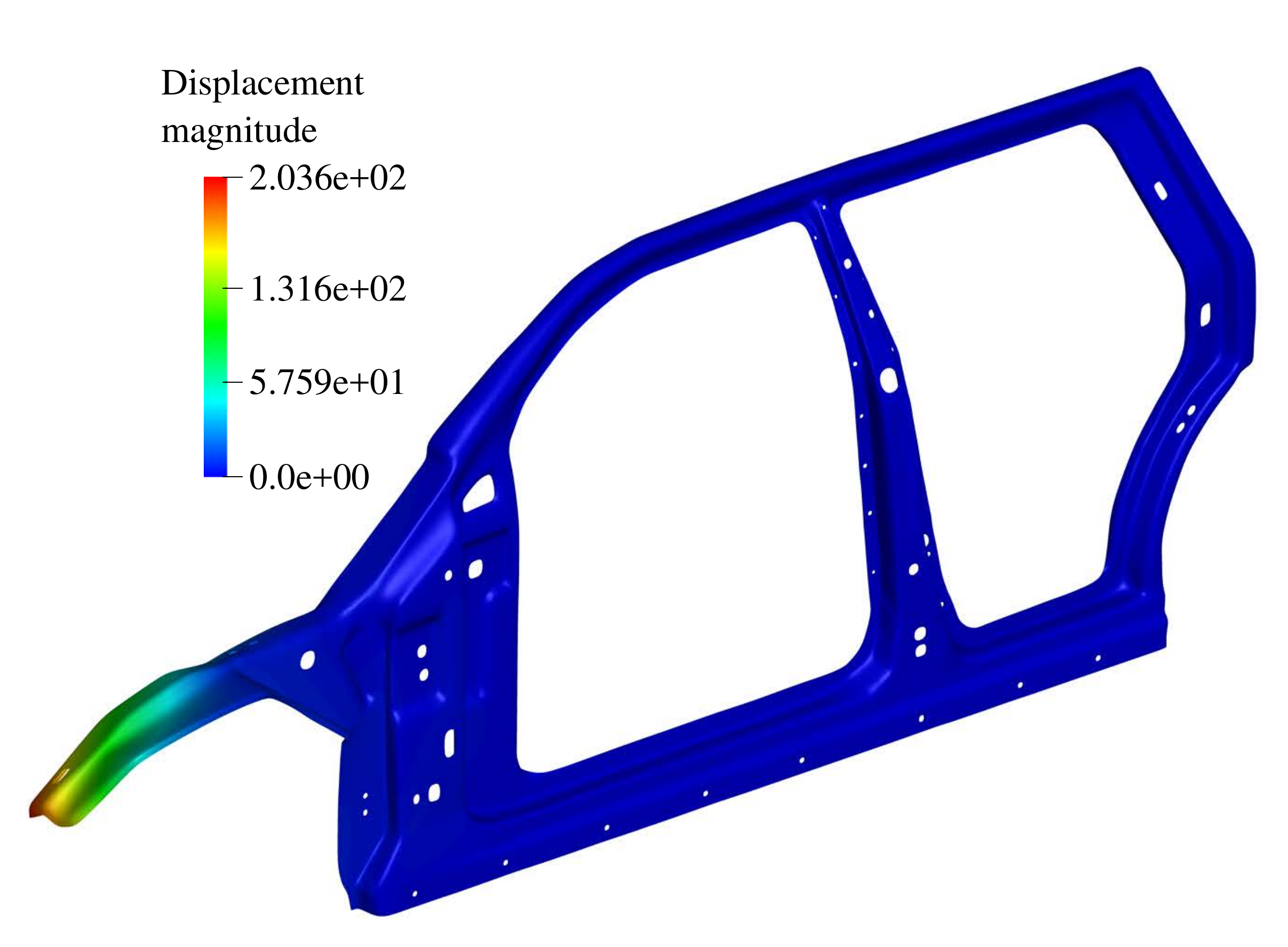}} \\
 \subfigure[]{\includegraphics[scale=0.28]{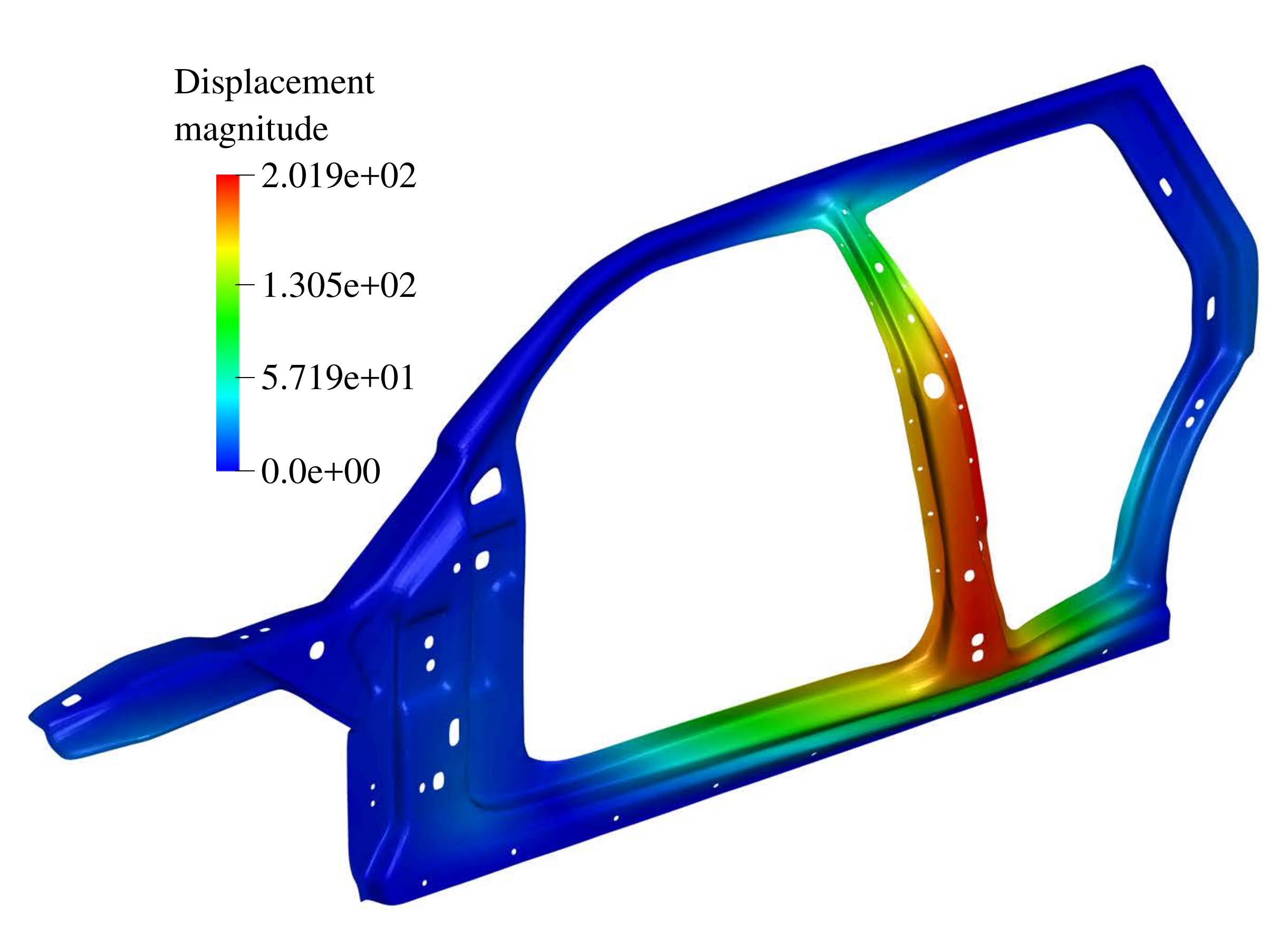}}
 \subfigure[]{\includegraphics[scale=0.28]{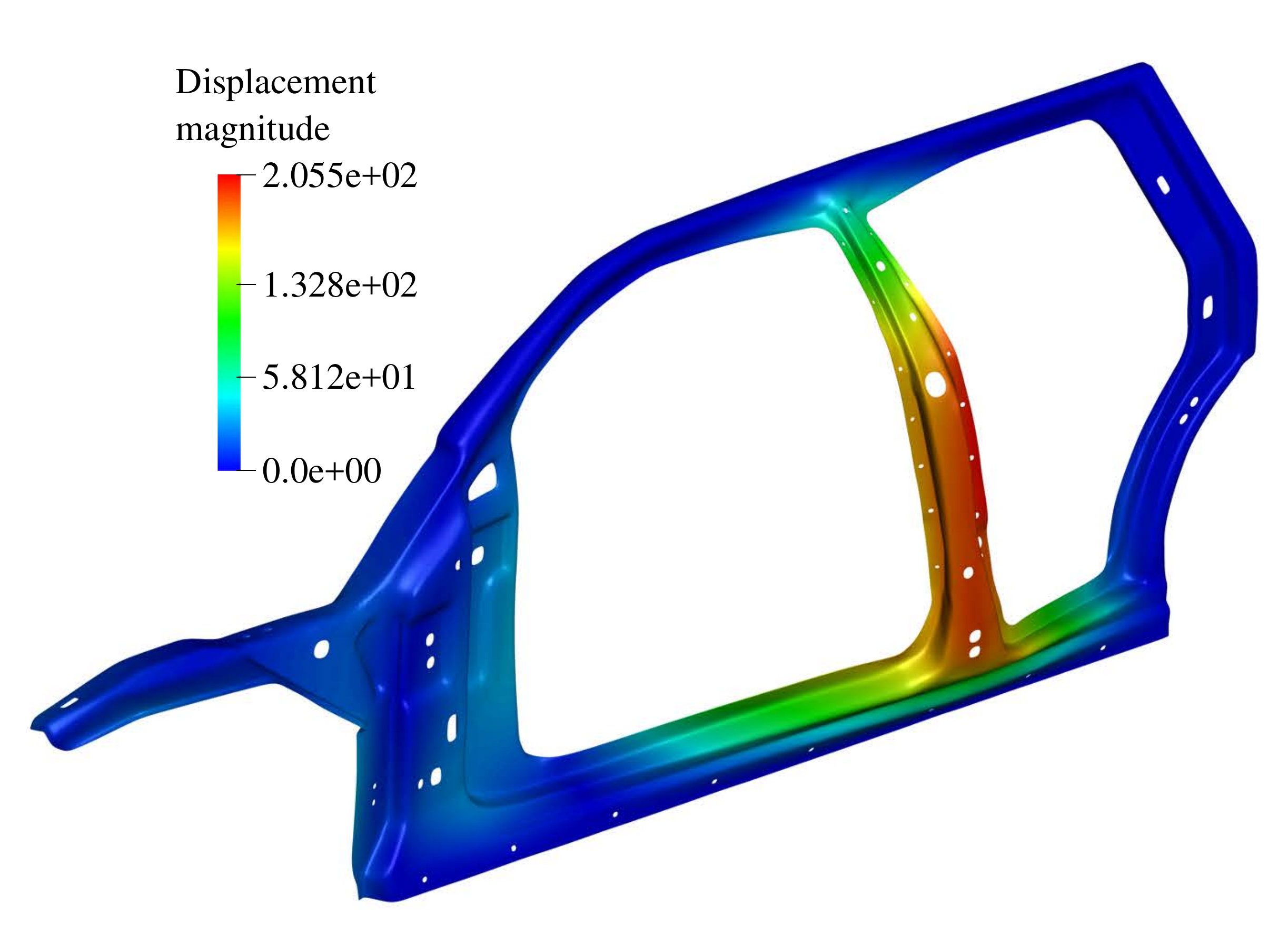}} \\
\caption{Side outer panel. (a), (c), and (e) show the first, second, and fifth mode shapes, respectively, using AST-splines. (b), (d), and (f) show the first, second, and fifth mode shapes, respectively, using conventional finite elements. The AST-spline volume has 94,764 degrees of freedom and the finite-element mesh has 4,033,128 degrees of freedom.}
\label{sopresults}
\end{figure}

Fig. \ref{sopresults} plots the first, the second, and the fifth mode shapes using AST-spline volumes and conventional finite elements. The finite element mesh has 5 elements in the thickness direction and a total of 4,033,128 degrees of freedom. ELFORM 2 is used to compute the mode shapes. As shown in Fig. \ref{sopresults}, good agreement between the coarse AST-spline volume and the overkill finite-element mesh is found.

\section{Conclusions and future work}

The subset of $C^1$-continuous non-negative bi-cubic AST-splines is significantly extended by allowing multiple EPs per face. We have mathematically proven that AST-splines with multiple EPs per face are linearly independent and form a non-negative partition of unity. We have numerically shown that optimal convergence rates for second- and fourth-order linear elliptic problems are obtained. We have built CAD geometries as complex as the side outer panel of a car using the T-spline capabilities of Autodesk Fusion360. Then, we have exported the control nets and combined them with our analysis-suitable basis functions to generate AST-spline surfaces. We have thickened these surfaces to obtain AST-spline volumes and imported them into LS-DYNA using B\'ezier extraction. We have solved eigenvalue problems using AST-spline volumes and conventional finite elements in LS-DYNA. Good agreement is found and AST-splines need significantly fewer degrees of freedom than conventional finite elements to reach a mesh-independent result.

Coming up with benchmark problems of increasing geometric complexity to evaluate the performance of different IGA techniques is a required task in the near future. These benchmarks would enable not only comparisons between different types of analysis-suitable splines that handle EPs \cite{toshniwal2017smooth, Wei2018, casquero2020seamless, majeed2017isogeometric, zhang2020manifold}, but also comparisons with non-boundary-fitted methods that deal with trimmed NURBS representations \cite{nagy2015numerical, breitenberger2015analysis, leidinger2019explicit,buffa2020minimal, antolin2019overlapping, wei2020immersed}.

\section*{Appendix A. Casteljau algorithm}

Given a cubic B\'ezier curve with B\'ezier control points $\mathbf{B}_1$, $\mathbf{B}_2$, $\mathbf{B}_3$, and $\mathbf{B}_4$ and parametric coordinate $u \in \left[ 0, a \right]$, the de Casteljau algorithm enables to refine the B\'ezier curve at $u = a/2$ as follows
\begin{equation}    \label{casteljaufirst}
\mathbf{B}_{1}^1 = \mathbf{B}_{1} \text{,} 
\end{equation}
\begin{equation}  
\mathbf{B}_{2}^1 =  \frac{\mathbf{B}_{1}}{2} + \frac{\mathbf{B}_{2}}{2}  \text{,} 
\end{equation}
\begin{equation}  
\mathbf{B}_{3}^1 =  \frac{\mathbf{B}_{1}}{4} + \frac{\mathbf{B}_{2}}{2}  + \frac{\mathbf{B}_{3}}{4} \text{,} 
\end{equation}
\begin{equation}  
\mathbf{B}_{4}^1 =  \mathbf{B}_{1}^2 = \frac{\mathbf{B}_{1}}{8} + \frac{3\mathbf{B}_{2}}{8}  + \frac{3\mathbf{B}_{3}}{8}   + \frac{\mathbf{B}_{4}}{8} \text{,} 
\end{equation}
\begin{equation}  
\mathbf{B}_{2}^2 =  \frac{\mathbf{B}_{2}}{4} + \frac{\mathbf{B}_{3}}{2}  + \frac{\mathbf{B}_{4}}{4} \text{,} 
\end{equation}
\begin{equation}  
\mathbf{B}_{3}^2 =  \frac{\mathbf{B}_{3}}{2} + \frac{\mathbf{B}_{4}}{2}  \text{,} 
\end{equation}
\begin{equation}    \label{casteljaulast}
\mathbf{B}_{4}^2 = \mathbf{B}_{4} \text{,} 
\end{equation}
\begin{figure} [t!] 
 \centering
 \subfigure[Before refinement]{\includegraphics[scale=0.4]{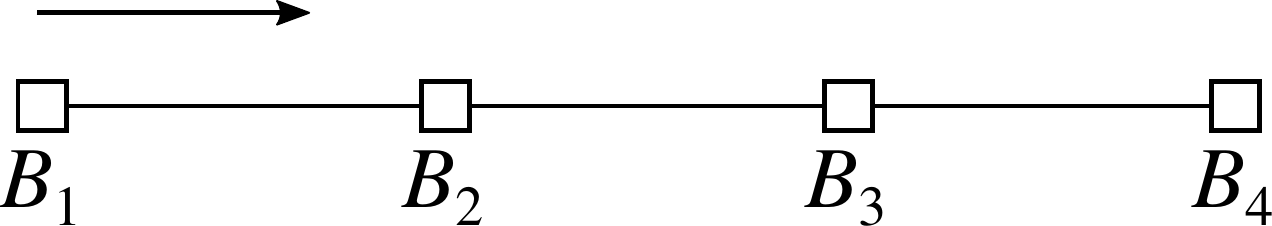}}
 \subfigure[After refinement]{\includegraphics[scale=0.4]{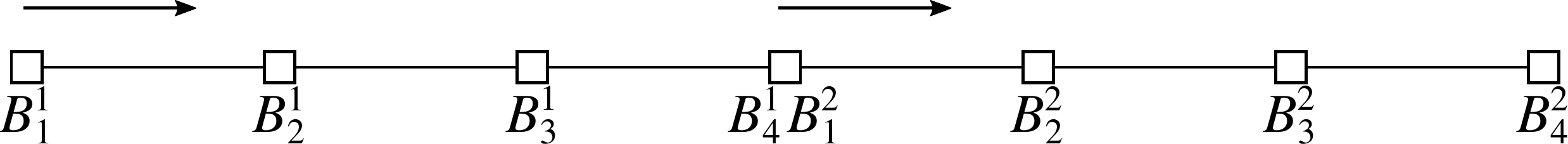}}
\caption{A cubic B\'ezier curve refined using the de Casteljau algorithm.}
\label{casteljau}
\end{figure}
where $\mathbf{B}_i^1$ and $\mathbf{B}_i^2$ with $i \in \{1,2,3,4\}$ are the B\'ezier control points of the two new cubic B\'ezier curves (see Fig. \ref{casteljau}). Given a cubic B\'ezier surface with B\'ezier control points $\mathbf{B}_i$ with $i \in \{1,2,...,16\}$ and parametric coordinates $u,v \in \left[ 0, a \right]$, the refinement of the B\'ezier surface at $u=v=a/2$ is obtained by tensor products of Eqs. \eqref{casteljaufirst}-\eqref{casteljaulast}.

\section*{Acknowledgements}

X. Wei was supported in part by the ERC AdG project CHANGE n. 694515 and the SNSF (Swiss National Science Foundation) project HOGAEMS n. 200021-188589. X. Li was supported by the National Key R\&D Program of China(2020YFB1708900), NSF of China (No.61872328) and the Youth Innovation Promotion Association CAS. Y. J. Zhang and K. Qian were partially supported by the NSF grants CMMI-1953323 and CBET-1804929 as well as Honda Motor Co., Ltd. T. J. R. Hughes was partially supported by the Office of Naval Research, USA (Grant Nos. N00014-17-1-2119 and N00014-13-1-0500). H. Casquero was partially supported by Honda Motor Co., Ltd. and Ansys Inc. This work used the Extreme Science and Engineering Discovery Environment (XSEDE), which is supported by National Science Foundation grant number OCI-1053575. Specifically, it used the Bridges system, which is supported by NSF award number ACI-1445606, at the Pittsburgh Supercomputing Center (PSC).

% \section*{References}

%% The Appendices part is started with the command \appendix;
%% appendix sections are then done as normal sections
%% \appendix

%% \section{}
%% \label{}

%% If you have bibdatabase file and want bibtex to generate the
%% bibitems, please use
%%
%%  \bibliographystyle{elsarticle-num} 
%%  \bibliography{<your bibdatabase>}

\bibliographystyle{elsarticle-num} 
\bibliography{./Bibliography}

%% else use the following coding to input the bibitems directly in the
%% TeX file.

\end{document}